\documentclass[a4paper]{amsart}

\usepackage[utf8]{inputenc}
\usepackage[T1]{fontenc}
\usepackage{mathtools}
\usepackage{amssymb,amsfonts,amsmath}
\usepackage{comment}
\usepackage{enumitem}
\usepackage{needspace}
\usepackage{microtype}
\usepackage{url}
\usepackage{aliascnt}
\usepackage[hidelinks]{hyperref}
\usepackage[capitalise,nameinlink,noabbrev]{cleveref}

\usepackage{tikz-cd}
\usepackage{tikz}
\usetikzlibrary{arrows, automata, positioning}
\usetikzlibrary{decorations.pathreplacing}

\numberwithin{equation}{section}
\newtheorem{theorem}{Theorem}[section]
\newaliascnt{lemma}{theorem}
\newtheorem{lemma}[lemma]{Lemma}
\aliascntresetthe{lemma}
\newaliascnt{proposition}{theorem}
\newtheorem{proposition}[proposition]{Proposition}
\aliascntresetthe{proposition}
\newaliascnt{corollary}{theorem}
\newtheorem{corollary}[corollary]{Corollary}
\aliascntresetthe{corollary}
\theoremstyle{definition}
\newaliascnt{definition}{theorem}
\newtheorem{definition}[definition]{Definition}
\aliascntresetthe{definition}
\newaliascnt{notation}{theorem}
\newtheorem{notation}[notation]{Notation}
\aliascntresetthe{notation}
\newaliascnt{question}{theorem}
\newtheorem{question}[question]{Question}
\aliascntresetthe{question}
\crefname{question}{Question}{Questions}
\theoremstyle{remark}
\newaliascnt{remark}{theorem}
\newtheorem{remark}[remark]{Remark}
\aliascntresetthe{remark}
\crefname{theorem}{Theorem}{Theorems}
\crefname{lemma}{Lemma}{Lemmas}
\crefname{proposition}{Proposition}{Propositions}
\crefname{corollary}{Corollary}{Corollaries}
\crefname{definition}{Definition}{Definitions}
\crefname{notation}{Notation}{Notations}
\crefname{remark}{Remark}{Remarks}

\DeclareMathOperator{\Sym}{Sym}
\DeclareMathOperator{\EL}{EL}
\DeclareMathOperator{\MF}{MF}
\DeclareMathOperator{\Aut}{Aut}
\DeclareMathOperator{\CAT}{CAT}
\DeclareMathOperator{\GL}{GL}
\DeclareMathOperator{\FW}{FW}
\DeclareMathOperator{\FM}{FM}
\DeclareMathOperator{\SAut}{SAut}
\DeclareMathOperator{\SL}{SL}
\DeclareMathOperator{\Fix}{Fix}

\newcommand{\N}{\mathbb N}
\newcommand{\Z}{\mathbb Z}
\newcommand{\cT}{\mathcal T}

\newcommand{\frakC}{\mathfrak C}

\newcommand{\Qd}{\mathcal{Q}_d}
\newcommand{\Cstar}{\mathrm{C}^\ast}
\newcommand{\calK}{\mathcal K}

\newcommand{\Ad}{\mathrm{Ad}}
\newcommand{\Tr}{\mathrm{Tr}}
\newcommand{\Gp}{\mathcal{G}_p}
\newcommand{\Gq}{\mathcal{G}_q}

\title{Higher Brin--Thompson groups are Kazhdan}
\date{}
\hypersetup{pdftitle={Higher Brin--Thompson groups are Kazhdan}}

\author[R. Sauer]{Roman Sauer}
\address{Karlsruhe Institute of Technology, Englerstr.\ 2, 76131 Karlsruhe, Germany}
\email{roman.sauer@kit.edu}
\author[E. Schesler]{Eduard Schesler}
\address{Karlsruhe Institute of Technology, Englerstr.\ 2, 76131 Karlsruhe, Germany}
\email{eduardschesler@googlemail.com}
\makeatletter
\providecommand{\@wraptoccontribs}[2]{\wraptoccontribs{#1}{#2}}
\makeatother
\contrib[with an appendix by]{Francesco Fournier-Facio}
\address{Department of Mathematics, Heriot-Watt University, Edinburgh, UK}
\email{f.fournier-facio@hw.ac.uk}
\subjclass[2010]{Primary 20F65; Secondary 20E08, 20E32, 20F69}

\begin{document}
\begin{abstract}
We give an algebraic criterion for certain sequences $(G_n)_{n\in\N}$ of groups to have Kazhdan's property $(T)$ eventually. 
As a consequence, we prove that the Brin--Thompson groups $nV$ have Kazhdan's property $(T)$ for large~$n$.
The algebraic criterion provides a uniform proof that $nV$, $\Aut(F_n)$ and $\EL_n(R)$ for a finitely generated associative unital ring~$R$ have property $(T)$ for large~$n$. In an appendix, it is shown that the group $nV$ for large~$n$ is neither hyperlinear, in particular not sofic, nor $\MF$. Further, it is not Schatten $p$-approximable for any $1\le p<\infty$. 
\end{abstract}
\maketitle

\section{Introduction}

Ever since Kazhdan introduced property~$(T)$ in 1967 \cite{Kazhdan67}, it has remained a central notion in group theory and a fruitful source of new developments and applications.
In the same work, he established property~$(T)$ for certain simple algebraic groups of higher rank over local fields and their lattices, thereby providing the first non-compact groups with property $(T)$.
In subsequent work, Delaroche and Kirillov \cite{DelarocheKirillov68}, Vaserstein \cite{Vaserstein68}, and Wang \cite{Wang69} extended Kazhdan's results to all such higher-rank
groups, while Kostant \cite{Kostant69,Kostant75} exhibited groups of real rank one enjoying property~$(T)$.
For a long while, Lie and algebraic groups and their lattices remained the main sources of infinite discrete groups with property~$(T)$.
The foundations of a geometric and spectral approach were laid by Garland \cite{Garland73}, who connected spectral estimates on links of buildings to the vanishing of cohomology.
Cartwright, M{\l}otkowski, and Steger \cite{CartwrightMlotkowskiSteger94} subsequently established property~$(T)$ for certain groups acting on $\widetilde A_2$-buildings by a combinatorial argument independent of the representation theory of ambient algebraic groups.
Building on Garland's method, Żuk \cite{Zuk96} and Ballmann--Świątkowski \cite{BallmannSwiatkowski97} developed spectral criteria for property~$(T)$ in terms of links in simplicial complexes, yielding new examples beyond the realm of buildings.
These methods have since undergone substantial refinements and continue to evolve \cite{Zuk03,KotowskiKotowski13,Ashcroft23,Oppenheim26}.
Another route to proving property~$(T)$ without relying on suitable non-discrete ambient groups was opened by Shalom \cite{Shalom99}, who combined bounded generation with relative property~$(T)$ to obtain direct verifications of property $(T)$ for classical lattices.
Further development of the bounded generation method by Shalom and Vaserstein enabled them to establish property~$(T)$ for $\EL_n(R)$, $n\geq3$, over every finitely generated commutative unital ring $R$ \cite{Shalom06,Vaserstein07}.
These developments culminated in the work of Ershov and Jaikin-Zapirain \cite{EJ}, who combined Kassabov's relative property~$(T)$ results for matrix groups over non-commutative rings \cite{Kassabov07} with a spectral criterion inspired by Dymara and Januszkiewicz \cite{DymaraJanuszkiewicz02} to prove that $\EL_n(R)$ has property~$(T)$ for every finitely generated unital associative ring $R$ and every $n\geq3$.
This strengthening of Kazhdan's classical result for $\SL_n(\mathbb Z)$ to the setting of non-commutative rings had a natural parallel in the longstanding question of whether $\operatorname{Aut}(F_n)$ has property~$(T)$ in sufficiently large rank \cite[p.~128]{Lubotzky94}.
The groundwork for its eventual resolution was laid by Ozawa \cite{Ozawa16}, who showed that a finitely generated group $G$ has property~$(T)$ if and only if its Laplacian $\Delta$ satisfies $\Delta^2-\varepsilon\Delta=\sum_{i=1}^m\xi_i^*\xi_i$ for suitable $\varepsilon>0$ and $\xi_1,\ldots,\xi_m\in\mathbb{R}[G]$.
Building on this, Netzer and Thom \cite{NetzerThom15} developed a computational approach that uses semidefinite optimization to find approximate solutions to this equation and turns them into exact ones.
Further developments of this approach ultimately led to a positive resolution of the question for $\operatorname{Aut}(F_n)$ in every rank $n \geq 4$ \cite{KalubaNowakOzawa19,KalubaKielakNowak21,Nitsche22}.
More recently, Nitsche \cite{Nitsche26} obtained a computer-free proof of property~$(T)$ for $\operatorname{Aut}(F_n)$ for all sufficiently large $n$, using a limiting argument for normalized harmonic $1$-cocycles on $\operatorname{SAut}(F_n)$.

To this day, property~$(T)$ is often regarded as rare among naturally occurring groups.
However, this impression is challenged by the fact that known obstructions to property~$(T)$ are rather rare as well.
One such obstruction is the existence of an amenable action on a set without finite orbits, whose absence is known as property~$\FM$ \cite{CornulierFM}.
This already rules out property~$(T)$ for a wide range of naturally occurring groups, including all groups that virtually surject onto $\mathbb Z$.
A second obstruction is the existence of a Schreier graph with more than one end.
Sageev's construction \cite{Sageev95} turns such a Schreier graph in an unbounded action on a $\CAT(0)$ cube complex, whose existence rules out property~$(T)$ by a result of Niblo and Roller \cite{NibloRoller98}.
For finitely generated groups, the absence of such Schreier graphs is known as property~$\FW$.
Cornulier systematically studied this property \cite{CornulierFW} and showed that, even in conjunction with property~$\FM$, it does not imply property~$(T)$ \cite{CornulierFM}.
Nevertheless, it remains mysterious whether property~$(T)$ is genuinely rare among groups satisfying both~$\FM$ and~$\FW$.
In this paper, we explore this question within a certain family of Thompson-like groups, which have their origins in the groups $F$, $T$, and $V$, introduced by Thompson in 1965 \cite{Thompson65}.
While the infinite abelianization of $F$ immediately rules out property~$(T)$, the corresponding question for $T$ was an open problem of Ghys and Sergiescu \cite{GhysSergiescu87} that was answered in the negative by Reznikov \cite{Reznikov01}.
A far-reaching generalization of this result was obtained by Farley \cite{Farley03,FarleyCorrigendum,Farley05}, who constructed a proper action of $V$ on a $\CAT(0)$ cube complex, thereby establishing the Haagerup property for $V$ and ruling out property~$(T)$ for all its infinite subgroups.
Proper actions on $\CAT(0)$ cube complexes have since been constructed for many further families of Thompson-like groups, strengthening the impression that such actions are a common feature of these groups \cite{Hughes09,FarleyHughes17,BelkForrest19}.
Even in the class of Röver--Nekrashevych groups, which combine Thompson groups with self-similar groups, such as $\mathrm{SL}_n(\mathbb{Z})$~\cite{Kapovich12}, unbounded actions on $\CAT(0)$ cube complexes remain available \cite{WuWuZhaoZhou25,BashwingerDeBonis25}.
Alongside these geometric developments, Jones' construction of unitary representations of Thompson's groups, arising from his work on subfactor planar algebras \cite{Jones17}, has led to new proofs of the Haagerup property for $T$ and $V$ without relying on actions on $\CAT(0)$ cube complexes \cite{BrothierJones19,Brothier23}.
One family of Thompson-like groups that has resisted every attempt to establish the Haagerup property or construct an action on a $\CAT(0)$ cube complex with unbounded orbits is that of the Brin--Thompson groups $nV$, $n\geq 2$.
These groups were introduced by Brin \cite{Brin04} as a generalization of $V$ and consist of homeomorphisms of $\mathfrak{C}^n$, where $\mathfrak{C} =\{0,1\}^{\mathbb N}$ is the Cantor set.
More precisely, the elements of $nV$ are the homeomorphisms of $\mathfrak{C}^n$ that locally agree with elements of $V^n$, acting coordinate-wise.
A recent result of Callard and Salo rules out proper actions of $nV$ on $\CAT(0)$ cube complexes for $n\geq2$ by exhibiting distorted infinite cyclic subgroups \cite{CallardSalo24}.
Another proof of this fact was obtained even more recently by Kojima and Sheng by showing that $nV$ contains subgroups isomorphic to $\mathbb Q$ \cite{KojimaSheng26}.
However, it was still unknown whether the groups $nV$, $n\geq2$, have the Haagerup property~\cite{GenevoisQuestions,Tanner24,ZaremskyOpen}.
Similarly, the possibility that these groups might have property~$\FW$, or even property~$(T)$, remained open \cite{GenevoisQuestions,ZaremskyOpen,MedianGeometry26,FournierFacioWuZaremsky26}.
Regarding the question of property~$\FW$, the authors will give an affirmative answer for all $n\geq2$ in forthcoming work, building on a recent result of Matte Bon, Nekrashevych, and Zheng \cite[Theorem~3.1]{MBNZ2026Commensurating}.
Despite this, the non-rigid nature of the Brin--Thompson groups may make them seem unlikely candidates for property~$(T)$.
They therefore provide a particularly interesting test case for understanding the gap between Kazhdan groups and groups satisfying properties~$\FW$ and~$\FM$, although to the best of our knowledge, property $\FM$ was not yet established for Brin--Thompson groups.
Our main result aims to shed some light on this gap.

\begin{theorem}\label{thm:main-intro}
For sufficiently large~$n\in\N$, the Brin--Thompson group~$nV$ has Kazhdan's property~$(T)$. 
\end{theorem}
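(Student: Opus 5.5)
The plan is to deduce the theorem from the algebraic criterion announced in the abstract. That criterion should say roughly the following. Suppose a sequence $(G_n)$ comes with subgroups $G_{n,i}$ indexed by the $n$ coordinates and with actions of $\Sym(n)$ permuting them. Suppose further that small groups like $G_{n,\{i,j\}}$ generate, that subgroups with disjoint index sets commute, and that the relevant "elementary" subgroups are uniformly finitely generated with a uniform relative property~$(T)$-type spectral control. Then $G_n$ has property~$(T)$ for large $n$.

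The natural proof of such a criterion imitates the Ershov--Jaikin-Zapirain or Kaluba--Kielak--Nowak/Nitsche scheme. Write the Laplacian of $G_n$ as $\Delta_n=\sum_{i\ne j}\Delta_{ij}$, a sum of symmetric "edge" Laplacians permuted by $\Sym(n)$. Then show that $\Delta_n^2-\varepsilon\Delta_n$ is a sum of hermitian squares in $\mathbb R[G_n]$ by splitting $\Delta_n^2$ into three kinds of terms:
\begin{itemize}
\item "Sq" terms $\Delta_{ij}^2$;
\item "Adj" terms $\Delta_{ij}\Delta_{jk}$ sharing one index;
\item "Op" terms $\Delta_{ij}\Delta_{kl}$ with disjoint indices.
\end{itemize}
The Op terms are sums of squares because the factors commute. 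The key inequality to obtain is $\mathrm{Adj}+c\,\mathrm{Op}\ge \varepsilon\Delta$ uniformly in a fixed small $n_0$. Symmetrising over $\Sym(n)$ then makes the Op contribution grow quadratically relative to Adj, which forces positivity for large $n$. Since the criterion is stated earlier in the paper, I take it as given and focus on verifying its hypotheses for $nV$.

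For $G_n=nV$, the index $i\in\{1,\dots,n\}$ corresponds to the $i$-th Cantor coordinate. I would take as building blocks the copies of $V$ acting in coordinate $i$, together with the "mixing" elements that act on two coordinates $i,j$, such as baker's-map-type elements in $2V$ embedded in coordinates $\{i,j\}$. The steps are as follows.
\begin{enumerate}
\item Use Brin's finite presentation and generation results to show that $nV$ is generated by the images of $2V$ (or $3V$) under the coordinate embeddings $\iota_{ij}$. This is plausible, since Brin's generators only involve at most two coordinates at a time.
\item Check $\Sym(n)$-equivariance: permuting coordinates is an automorphism of $nV$ that permutes the $\iota_{ij}(2V)$.
\item Commutation for disjoint index sets is the delicate point. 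Elements of $\iota_{ij}(2V)$ and $\iota_{kl}(2V)$ acting on disjoint coordinates of $\frakC^n$ do commute, because their actions are products on independent factors. So $\Delta_{ij}$ and $\Delta_{kl}$ commute.
\item Supply the finite-$n_0$ input: an explicit certificate, or a relative property~$(T)$ statement, that $\mathrm{Adj}_{n_0}+c\,\mathrm{Op}_{n_0}-\varepsilon\Delta_{n_0}$ is a sum of squares in some fixed $n_0V$.
\end{enumerate}

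I expect step 4 to be the main obstacle. Unlike $\EL_n(R)$, there is no Kassabov-type relative $(T)$ available, and $2V$ itself is not Kazhdan, so the Adj term cannot be controlled inside a single building block. My first attempt would be to choose the generating subgroups so that the "Adj" interaction between $\iota_{ij}$ and $\iota_{jk}$ is governed by a group admitting a known positivity certificate. Failing that, I would look for an abstract orthogonality/angle argument in the spirit of Ershov--Jaikin-Zapirain: bound the angle between fixed subspaces of $\iota_{ij}(2V)$ and $\iota_{jk}(2V)$ away from $0$ using the rich mixing in the shared coordinate $j$.
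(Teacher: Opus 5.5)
\textbf{Genuine gap.} Your step~4 is where the argument breaks, and it is not a technicality: in your framework it is the whole content of the theorem. In the Kaluba--Kielak--Nowak scheme you describe, symmetrisation over $\Sym(n)$ only \emph{propagates} an inequality $\mathrm{Adj}_{n_0}+c\,\mathrm{Op}_{n_0}-\varepsilon\Delta_{n_0}\succeq 0$. That inequality must already be certified at some fixed $n_0$, and you give no way to produce such a certificate. Neither of your fallbacks supplies it. The $\mathrm{Adj}$ terms live in the group ring of the copy of $3V$ on the three coordinates involved, and no positivity certificate or relative property~$(T)$ statement is available there. An Ershov--Jaikin-Zapirain angle bound needs exactly this kind of spectral input; for $\EL_n(R)$ it comes from Kassabov's relative property~$(T)$, and nothing analogous is known for $nV$. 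The authors also report that their own sum-of-squares experiments for $nV$ were inconclusive.

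Two smaller points. Your claim that $2V$ is not Kazhdan is unjustified: the paper poses this as an open question (Question~\ref{que:T-for-2V}). Also, the criterion you ``take as given'' is not the one the paper proves, so checking your steps 1--3 does not verify its hypotheses.

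\textbf{Missing idea.} No finite-stage spectral input is needed at all. Theorem~\ref{thm:main} is purely algebraic. It asks for a finite set $\cT$ of generator templates for the $W$-group $G=\bigcup_n nV$, where $W$ acts by finitary coordinate permutations and the sign changes act trivially. The templates must have the \emph{absorption property}: $t_{1,2}t_{1,3}$ is conjugate to $t_{1,3}$ by an element of $G(\{2,3\})$. They must also satisfy the local conjugacy (or local commutator) condition.

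The theorem is proved by contradiction through an asymptotic argument:
\begin{enumerate}
\item For infinitely many $n$, take fixed-point-free affine actions of $G_n$ with energy-minimising basepoints, induce them to $G_n\rtimes W_n$, normalise, and pass to an ultralimit action of $G$.
\item A counting identity over pairs gives orthogonality (a rectangle property) for disjointly supported elements.
\item The weak limits $E_t$ of $t_{1,a}$ as $a\to\infty$ are affine orthogonal projections, because absorption makes them idempotent; they commute by the local condition.
\item A barycentre argument gives $E_t(o)=o$. Absorption together with Pythagoras then forces $to=o$, contradicting the normalisation.
\end{enumerate}

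For $nV$, the remaining work is to verify these hypotheses:
\begin{enumerate}
\item Choose $28$ involutive address transpositions in $2V$ that generate: the adjacent transpositions in the grid $\Lambda^2$, plus $z_0,z_1$ and their coordinate swaps.
\item Prove absorption by matching clopen partitions and by using that nontrivial involutions with fixed points in $2V$ are conjugate.
\item Obtain local conjugacy by cutting $t$ and $u$ down to the regions $\xi_4\in C(0)$ and $\xi_4\in C(1)$ of an auxiliary fourth coordinate. There they are disjointly supported and hence commute.
\end{enumerate}
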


The proof of \cref{thm:main-intro} is inspired by Nitsche's asymptotic argument for $\Aut(F_n)$ \cite{Nitsche26}.
We recast and extend his argument in the language of affine isometric actions on Hilbert spaces.
Arguing by contradiction, we assume that property~$(T)$ fails for arbitrarily large $n$ and obtain a corresponding sequence of affine isometric actions without fixed points.
After choosing basepoints and suitably normalizing the actions, we pass to an ultralimit to obtain a limiting affine action whose basepoint is not a global fixed point.
We develop the argument further by studying weak limits of appropriate generators in this limiting action.
Under suitable algebraic hypotheses, these limits are mutually commuting orthogonal projections onto closed affine subspaces.
Using a barycentric argument, we then show that these projections fix the basepoint.
The final step is to obtain a contradiction by showing that the generators themselves fix the basepoint.
To this end, we exploit relations expressing certain products of two generators as conjugates of one of the factors.
The criterion for property~$(T)$ that we derive from this approach boils down to algebraic conditions on generators, their relations, and their behaviour under suitable automorphisms, see~\cref{thm:main}.
As we will see in \cref{sec:aut,sec:elementary}, the same approach also recovers, for all sufficiently large $n$, the property~$(T)$ results for $\Aut(F_n)$ \cite{KalubaNowakOzawa19,KalubaKielakNowak21,Nitsche22,Nitsche26} and $\EL_n(R)$ \cite{EJ}. We note that the resulting proof in the case of $\Aut(F_n)$ diverges from Nitsche's proof after \cref{lem:locality}, which is similar to~\cite[Lemma~2]{Nitsche26}.
We thus obtain the following corollary from a common algebraic framework.

\begin{corollary}\label{cor:applications-intro}
The following groups have property~$(T)$ for all sufficiently large $n$.
\begin{enumerate}[label=(\roman*)]
\item The Brin--Thompson groups $nV$.
\item The groups $\Aut(F_n)$.
\item The elementary groups $\EL_n(R)$, for every finitely generated associative unital ring $R$.
\end{enumerate}
\end{corollary}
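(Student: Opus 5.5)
The corollary is to be deduced from the general algebraic criterion, \cref{thm:main}, by verifying its hypotheses separately for each of the three families. I therefore start by recalling the shape of that criterion as sketched in the introduction. There is a sequence of groups $G_n$ generated by finitely many conjugacy classes of ``root-type'' generators. These are permuted by automorphisms (coming from a symmetric-group-like action) that allow us to move any finite configuration of generators far apart as $n\to\infty$. The hypotheses then require three things: generators supported on disjoint index sets commute; there is enough locality, in the sense of \cref{lem:locality}, that weak limits of powers/averages of generators become commuting projections; and there are relations expressing a product of two generators $st$ as a conjugate $s^{g}$ of one factor. Under these hypotheses \cref{thm:main} yields property $(T)$ for $G_n$, $n$ large. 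For each item I will make these generator choices explicit and check the relations.

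For (iii), $\EL_n(R)$, take as generators the elementary matrices $e_{ij}(r)$ with $r$ in a finite generating set of $R$ (including $1$), indexed by ordered pairs $i\neq j$. The symmetric group $S_n$ acts by conjugation with permutation matrices; one may use signed permutation matrices inside $\EL_n(R)$, or simply use the automorphism action, as the criterion allows. Disjointness of index pairs gives commutation. The needed ``product equals conjugate'' relation comes from the Steinberg relations: for instance $e_{ij}(r)e_{ik}(s)$ is conjugate to $e_{ij}(r)$ by $e_{jk}(\pm s)$ (or its appropriate variant), since $[e_{ij}(r),e_{jk}(s)]=e_{ik}(rs)$. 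For (ii), $\Aut(F_n)$ (or $\SAut(F_n)$, which has finite index), take the Nielsen transvections $\rho_{ij},\lambda_{ij}$. Then $S_n$ acts by permuting basis elements, transvections with disjoint index sets commute, and the analogous relation $\rho_{ij}\rho_{ik}=\rho_{ij}^{\rho_{jk}^{\pm1}}$-type identities follow from the commutator relation $[\rho_{ij},\rho_{jk}]=\rho_{ik}^{\pm1}$. Passing from $\SAut(F_n)$ to $\Aut(F_n)$ uses that property $(T)$ passes to finite-index overgroups.

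For (i), $nV$, choose generators acting in a single coordinate direction. Take a finite generating set $S$ of $V$ (say transpositions of standard dyadic cones), and let $s^{(k)}$ denote $s$ acting on the $k$-th coordinate of $\frakC^n$, possibly restricted to a dyadic brick in the other coordinates. Then $S_n$ permuting coordinates acts by automorphisms, and elements in different coordinates acting on full factors commute. The relation ``product is a conjugate'' should come from the fact that in $V$ two transpositions of disjoint cones have a product conjugate to a single transposition by an element of the group. In $nV$ one additionally uses baker's-map-type elements that move a cone split in coordinate $k$ to one split in coordinate $l$.

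I expect (i) to be the main obstacle. The hard part is finding a generating family of $nV$ whose members are moved by coordinate permutations into commuting position while still generating $nV$ uniformly in $n$, and checking the locality hypothesis for it. To get generation I would use Brin's presentation/generation results, which say that $nV$ is generated by $V$ in each coordinate together with coordinate-swapping baker maps. I would then verify the conjugacy relations by explicit dyadic pictures. Items (ii) and (iii) are essentially routine from the Steinberg and Nielsen commutator relations.
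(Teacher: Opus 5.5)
Your proposal has a genuine gap. It starts with the hypotheses of \cref{thm:main}, which you recall incorrectly in exactly the place that carries the weight. That the limit projections $E_t$ commute is not a consequence of ``locality''. \cref{lem:locality} holds for every set of generator templates, whereas commutation is \cref{prop:commuting-limits}, which requires the local commutator condition (\cref{def:local-commutators}) or the local conjugacy condition (\cref{def:local-conjugacy}). You never formulate or verify either condition. You also skip the inversion axiom (ii) and the disjointness $\cT_p\cap\cT_q=\emptyset$ in (iii) of \cref{def:coordinates}. Your statement of absorption is loose: \cref{def:absorption} demands that $t_{1,2}t_{1,3}$ be conjugate to $t_{1,3}$ by an element of $G(\{2,3\})$, i.e.\ by a conjugator avoiding the base index $1$. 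That is what makes the conjugator invisible in the weak limit in \cref{prop:projection}. Even for (ii) the missing check is not automatic. In $\Aut(F_n)$ one must choose the signs $\epsilon,\delta$, as in $[R_{12},R_{31}^{-1}]=R_{32}^{-1}$ and $[L_{12},R_{31}]=R_{32}$, since for instance $[R_{12},R_{31}]$ sends $x_3$ to $x_3x_1x_2x_1^{-1}$ and so does not lie in $G(\{2,3,4\})$. For (iii) the check is the Steinberg relation $[e_{12}(r),e_{31}(s)]=e_{32}(-sr)\in G(\{2,3,4\})$, easy but necessary. Your $\EL$ identity is also off: $e_{jk}(x)e_{ij}(r)e_{jk}(-x)=e_{ij}(r)e_{ik}(-rx)$. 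So for the relevant case $s=r$ the conjugator is $e_{jk}(\pm1)$, not $e_{jk}(\pm s)$, and for $s\ne r$ the claim fails in general.

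For (i) the plan does not work as written. Suppose a template is an element of $V$ acting on a full coordinate factor; by $(1,2)$-invariance $\cT$ then contains one acting only on the first coordinate. Such a $t$ satisfies $t=t_{1,2}=t_{1,3}$, which violates $\cT_{\{1,2\}}\cap\cT_{\{1,3\}}=\emptyset$. Since $G(\{2,3\})$ commutes with $t$, absorption would also force $t^2=t$. Likewise, your relation ``a product of two disjoint transpositions is conjugate in $V$ to one transposition'' uses a conjugator in the same coordinate, which is useless for \cref{def:absorption}. The restriction to a cylinder in \emph{exactly one} other coordinate, which you mention only as an option, is forced and is the key idea. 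If $t$ applies a prefix transposition $\sigma$ to $\xi_1$ exactly when $\xi_2\in R$ for a proper cylinder $R$, then $t_{1,2}t_{1,3}$ applies $\sigma$ exactly when $(\xi_2,\xi_3)\in(R\times R^c)\sqcup(R^c\times R)$. Meanwhile $t_{1,3}$ applies it exactly when $(\xi_2,\xi_3)\in\frakC\times R$. An element of $2V$ on coordinates $2,3$ carrying the first partition to the second gives absorption; for templates acting on the second coordinate one uses instead that involutions of $2V$ with fixed points are conjugate. For these templates the local commutator condition genuinely fails. For example, with prefix transpositions $(00\ 01)$ and $(01\ 10)$ in the first coordinate, $[t_{1,2},u_{1,3}]$ acts nontrivially on $\xi_1$. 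So one needs the local conjugacy condition. Cut $t$ and $u$ along the complementary cylinders $\xi_4\in C(0)$ and $\xi_4\in C(1)$ of an auxiliary coordinate. Then $s_{1,2}$ and $w_{1,3}$ have disjoint supports and commute, while remaining conjugate to $t,u$ in $G(\{2,4\})$ and still satisfying absorption (\cref{prop:brin-conjugacy}). Your proposal has no substitute for this step. It also leaves open how baker's maps would fit into the template framework to give $G_n=nV$ (compare \cref{lem:brin-generation}).
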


In view of the preceding corollary, one could hope that our approach extends to mapping class groups of compact orientable surfaces.
However, this family of groups does not fit into the framework of $W$-groups, which are introduced in Section~\ref{sec:criterion} and form the basis of the proof of \cref{cor:applications-intro}.

In the world of Thompson-like groups it often happens that seemingly different constructions can yield isomorphic groups \cite{BleakDonovenJonusas17}.
For the Brin--Thompson groups, however, distinct values of $n$ yield non-isomorphic groups, as shown by Bleak and Lanoue \cite{BleakLanoue10}.
In fact, Matte Bon proved that $nV$ does not even embed into $mV$ when $n>m$ \cite[Corollary~1.11]{mattebon}.
Regarding this, the following question asks itself.

\begin{question}\label{que:T-for-2V}
Does the Brin--Thompson group $2V$ have property~$(T)$?
\end{question}

Our interest in property~$(T)$ for the Brin--Thompson groups was further fueled by recent developments in the theory of metric approximations of groups, particularly concerning soficity, hyperlinearity, and $\MF$ approximability.
In the case of soficity, OpenAI's criterion \cite[Proposition~2.3]{OpenAI26} can be easily applied to show that $nV$ cannot be sofic if it has property~$(T)$.
As pointed out by Francesco Fournier-Facio, recent developments concerning hyperlinearity and $\MF$ approximability \cite{Thom26,ALT26,caleb} allow one to obtain a stronger conclusion.
In the appendix, he combines these developments with \cref{thm:main-intro} to establish the following theorem, see~\cref{main:inapprox,schatten}.

\begin{theorem}\label{thm:inapproximability-intro}
For all sufficiently large $n$, the Brin--Thompson group $nV$ is neither sofic nor hyperlinear nor $\MF$.
Moreover, it is not Schatten $p$-approximable for any $1\leq p<\infty$.
\end{theorem}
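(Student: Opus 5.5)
The plan is to combine \cref{thm:main-intro} with recent non-approximability criteria for Kazhdan groups, as indicated in the introduction. Fix $n$ large enough that $nV$ has property~$(T)$. Three structural facts about $nV$ will be used together with $(T)$: it is finitely presented (Brin, Hennig--Matucci), it is simple (Brin), and it has a rich supply of self-similar copies. Concretely, for a dyadic subbrick $B\subset\mathfrak{C}^n$, the subgroup of elements supported in $B$ is isomorphic to $nV$ itself. Hence $nV$ contains $nV\times nV$ via disjoint bricks, and it contains commuting conjugate copies of any finitely generated subgroup.

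\textbf{Step 1 (soficity).} Apply OpenAI's criterion \cite[Proposition~2.3]{OpenAI26}. Its hypotheses are, roughly, a Kazhdan group together with suitable copies of a subgroup that are commuting and conjugate, and possibly a displacement or amplification condition. I will check these hypotheses using the brick structure just described. The conjugating elements will be elements of $nV$ that swap disjoint bricks, and commutation will follow from disjointness of supports. This rules out soficity.

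\textbf{Step 2 (hyperlinearity and MF).} Follow the pattern of \cite{Thom26,ALT26}. These results show that a finitely presented Kazhdan group admitting certain self-embeddings, where an element is conjugate to a product of commuting copies of itself, cannot be hyperlinear. Property~$(T)$ is used there to make almost-representations close to genuine ones, which is where the spectral gap enters. The argument then ends with a contradiction via traces or dimension counting. Next, use \cite{caleb} to pass from the hyperlinear statement to the MF statement. I expect this to require residual features such as simplicity and finite presentability, which $nV$ has.

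\textbf{Step 3 (Schatten).} Run the same stability-type argument with the normalized Schatten $p$-norms in place of the Hilbert--Schmidt norm. I expect this to go through using the Schatten versions stated in \cite{ALT26}.

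The main obstacle is Step 2: verifying exactly the algebraic self-similarity hypothesis required by the hyperlinearity theorem. This means producing elements $g\in nV$ and an embedding $\phi\colon nV\to nV$ such that $g$ is conjugate to $\phi(g)$, or to a product of commuting conjugates of $g$, in the form the criterion demands. I would first try to obtain this from the brick-doubling homeomorphism $\mathfrak{C}^n\cong \mathfrak{C}^n\sqcup\mathfrak{C}^n$, which is realized inside $nV$ by dyadic splittings. I would then verify the conjugacy explicitly using prefix-replacement maps.
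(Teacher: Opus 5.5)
Your overall plan of feeding \cref{thm:main-intro} into recent non-approximability criteria is right. However, you never state the criteria you invoke, the hypotheses you guess for them are not the actual ones, and so nothing that needs checking gets checked.

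For hyperlinearity, which subsumes soficity and makes Step~1 unnecessary (you also leave its hypotheses unverified), the relevant input is \cite[Corollary~D]{ALT26}. It says: if $\Gamma<G$ is \emph{infranormal}, i.e.\ the set $\{t\in G: t\Gamma t^{-1}\le\Gamma\}$ generates $G$, and both $\Gamma$ and $G$ have $(T)$, then hyperlinearity of $G$ forces $C_G(\Gamma)$ to be normal in $G$. No hypothesis of the form ``an element conjugate to a product of commuting copies of itself'' appears, so the obstacle you single out is not the real one. What has to be done is the following.
\begin{enumerate}[label=(\roman*)]
\item With $U_w=w\mathfrak{C}\times\mathfrak{C}^{n-1}$, let $\Gamma\le G=nV$ consist of the elements supported in $U_0$. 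It is isomorphic to $nV$, hence Kazhdan; this is where your brick observation enters.
\item Its centraliser is the group of elements supported in $U_1$, which is not normal.
\item One must prove infranormality. Exhibit $t_0,t_1\in G$ with $t_iU_{1i}=U_1$, so that $t_i\Gamma t_i^{-1}\le\Gamma$. Then show $\langle\Gamma,t_0,t_1\rangle=G$ by a locality argument: for a small clopen $U'\subset U_{10}$, move $gU'$ back onto $U'$ inside this subgroup and correct $g$ locally.
\end{enumerate}
This generation argument is the actual content of the step, and nothing in your plan replaces it.

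In Step~2, the passage from the hyperlinear statement to the MF statement via \cite{caleb} does not exist. MF concerns approximations in operator norm, and \cite{caleb} is a separate criterion: if $\Gamma$ has $(T)$ and $t\Gamma t^{-1}\lneq\Gamma$, then $\mathbb{Z}/2\wr_{G/\Gamma}G$ is not MF. To use it you must embed such a group into a Brin--Thompson group, and that is the real difficulty. The paper strengthens the criterion to semidirect products $G\ltimes A$, where $A$ is a countable Boolean group with $A^{\Gamma}\ne A^{t\Gamma t^{-1}}$. It then realises $nV\ltimes A$ inside $(n+1)V$, with $A$ generated by the commuting involutions $a_K$ that flip the extra coordinate over a dyadic rectangle $K$. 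This gives non-MF for $(n+1)V$ from $(T)$ for $nV$; simplicity and finite presentability play no role there.

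For Step~3, Schatten $p$-approximability refers to the \emph{unnormalised} Schatten norms, so redoing a hyperlinearity-type argument with normalised norms proves a different statement. The paper instead uses \cite{1:approx}: a group that is stable for operator-norm approximations and contains an involution lying in the kernel of every finite quotient is not Schatten $p$-approximable for any $p$. The group $nV$ is simple, has involutions and has no non-trivial finite quotients. Being also non-MF, every operator-norm asymptotic homomorphism of it is asymptotically trivial, so stability holds vacuously. Thus Step~3 rests on the MF result, not on the hyperlinear one.
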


Since the Brin--Thompson groups $nV$ are simple \cite{Brin10} and of type~$\mathrm{F}_{\infty}$ \cite{FMWZ13}, \cref{thm:inapproximability-intro} provides the first non-sofic (in particular, the first non-hyperlinear) examples, as well as the first non-$\MF$ examples, both among finitely presented simple groups and among groups of type~$\mathrm{F}_{\infty}$.
Moreover, to the best of our knowledge, they are the first naturally occurring
groups failing any of these three approximation properties,
having been introduced and extensively studied for reasons unrelated to approximation properties.
The combination of simplicity and finite presentability leads to a further application of \cref{thm:inapproximability-intro}.
For every sufficiently large $n$, we can specify a finite ball in $nV$ that admits none of the approximations considered in that theorem with arbitrarily small error.
Indeed, choose a finite presentation $nV=\langle S\mid R\rangle$ and let $r$ be the length of a longest relator in $R$.
Then $B_S(r)$ admits no $\varepsilon$-approximation for some $\varepsilon>0$ depending on the chosen notion of approximation.
Otherwise, approximations with errors tending to zero would yield a map from $B_S(r)$ into the corresponding metric ultraproduct that preserves partial multiplication and separates distinct elements.
The choice of $r$ ensures that the images of the generators satisfy all defining relations, giving a non-trivial homomorphism from $nV$ into this ultraproduct.
By simplicity, this homomorphism would be injective, contradicting the failure of the chosen approximation property.
The so obtained finite witnesses of inapproximability
suggest a tempting direction for future research, namely to understand their structure well enough to obtain obstructions to approximations without assuming property~$(T)$.

\subsection*{Statement on AI use}

\subsubsection*{Concerning the main text} The authors started this project a year ago with the initial goal to prove that the Brin--Thompson groups have the Haagerup property. After a few months of unsuccessful attempts we started to more and more believe that the Brin--Thompson groups have property~$(T)$. With the help of GPT-5.6 Sol we developed many computer experiments based on the sum-of-squares method. These computations turned out to be inconclusive. Then we turned to the paper of Nitsche~\cite{Nitsche26} who developed a computer-free approach to showing that $\Aut(F_n)$ has property~$(T)$ for large~$n$. We developed an axiomatic setting for this approach (see \cref{def:coordinates}) to widen its scope and started to develop combinatorial constraints to the affine action that result from the proof by contradiction. This process was based on our ideas but we did use GPT-5.6 Sol for background questions and as an interactive proof assistant. However, the combinatorial constraints we obtained were insufficient to reach a contradiction. Finally, we handed our results obtained thus far over to GPT-6 Astra. It contributed the key idea by suggesting to take weak limits of the resulting affine operators (cf. \cref{def:weak-limits}) and showed how to finish the argument. This helped to decrease the combinatorial complexity by turning combinatorial constraints into clean statements about affine actions on a Hilbert space.  Further, the authors used GPT-6 Astra in the process of integrating the proof for $nV$ into the axiomatic framework mentioned above and to make it into a uniform proof for all three classes of groups (cf. \cref{thm:main}). Finally, GPT-6 Astra was used for editorial assistance, including the creation of tikz diagrams. 
Whatever output the AI provided, was completely rewritten and reworked by the authors. 

\subsubsection*{Concerning the appendix} The free version of Claude assisted in drawing the tikz diagrams. No LLM was used at any point in the preparation of the appendix. Of course, much of what is discussed in the appendix is reliant on recent works that are heavily LLM-based.

\subsubsection*{Concerning the entire manuscript} The authors of the main text did a final proofreading run with GPT-6 Astra for the entire paper, including the appendix. The corrections were incorporated into the paper by all authors. 

\subsection*{Acknowledgements} We are grateful to  Matthew Brin, Adrien Le Boudec, Alex Lubotzky, Stefan Witzel, Matt Zaremsky for comments and feedback. R.S. and E.S. thank F.F.-F., the author of the appendix, for his careful reading of the main text and numerous detailed corrections. 

\section{The Kazhdan criterion}\label{sec:criterion}

In this section we discuss a Kazhdan criterion for increasing families of groups~$G_n$ that satisfy certain axioms. See \cref{thm:main}. The axioms are formulated for the colimit $G$ of these groups $G_n$. See \cref{def:coordinates}. Note that the group~$G$, which is used to prove property~$(T)$ for all but finitely many $G_n$ in \cref{thm:main}, does not have property~$(T)$ itself - for the simple reason that it is not finitely generated. 

To introduce the axiomatic framework, we define the following groups 
\[
  W=\Bigl(\bigoplus_{i\in\N}C_2\Bigr)\rtimes\Sym_{\mathrm{fin}}(\N),\qquad
  W_n=\Bigl(\bigoplus_{i=1}^n C_2\Bigr)\rtimes\Sym(\{1,\dots,n\}).
\]
Here $C_2$ is the group with two elements. 

We call the elements of the first factor \emph{sign changes} and the elements of the second factor \emph{permutations}. We use the usual cycle notation as in $(1,2)(3,4)$ for elements in $\Sym_{\mathrm{fin}}(\N)$. 
The sign change that is concentrated at $i$ is denoted by~$\varepsilon_i$.  
The \emph{support} of an element $(\epsilon, \sigma)$ of $W$ or $W_n$ is the union of the (finite) supports of $\epsilon$ and $\sigma$ in~$\N$. 
A group with an action of~$W$ by automorphisms is called a \emph{$W$-group}. 

\begin{definition}\label{def:coordinates}
A non-empty finite set $\cT\subseteq G$ of a $W$-group~$G$ is called a \emph{set of generator templates} of~$G$ if it has the following properties. 
\begin{enumerate}[label=(\roman*)]
\item Every element of $W$ supported outside $\{1,2\}$ fixes $\cT$ pointwise.
\item The set $\cT$ is closed under taking inverses and under the action of $W_2$. 
For every $t\in \cT$ we have $\varepsilon_1(t)=t^{-1}$ or $\varepsilon_2(t)=t^{-1}$. 
\item For $i\ne j$, denote by $t_{i,j}$ the image of $t\in\cT$ under any permutation that maps $1$ to $i$ and $2$ to $j$. For each unordered pair $p=\{i,j\}$, put
$\cT_p=\{t_{i,j}:t\in\cT\}$. We require that $\cT_p\cap\cT_q=\emptyset$ whenever $p\ne q$ and that $[s,t]=1$ for all $s\in \cT_p$ and $t\in \cT_q$ whenever $p\cap q=\emptyset$. 
\item The union of the sets $\cT_p$ over all unordered pairs $p\subset\N$ generates~$G$. 
\end{enumerate}
\end{definition}

Condition~(i) of \cref{def:coordinates} makes $t_{i,j}$ independent
of the chosen permutation.
Since the transposition $(1,2)$ preserves $\cT$, the set $\cT_p$ is also
independent of the ordering of $p$. 

\begin{notation}\label{not:coordinates}
For finite sets $I\subset\N$, define
\[
  G(I)=\langle\cT_p:p\subseteq I,\ |p|=2\rangle.
\]
Then $[G(I),G(J)]=1$ whenever $I\cap J=\varnothing$.
Further, we set \[G_n=G(\{1,\dots,n\})\quad\text{and}\quad S_n=\bigcup_{p\subset\{1,\dots,n\}}\cT_p.\]
\end{notation}

\begin{definition}\label{def:absorption}
A set of generator templates $\cT$ satisfies the \emph{absorption property}
if, for every $t\in\cT$, the element $t_{1,2}t_{1,3}$ is conjugate
to $t_{1,3}$ by an element of $G(\{2,3\})$.
\end{definition}

\begin{lemma}\label{lem:absorption-commutation}
Suppose that $\cT$ satisfies the absorption property.
For every $t\in\cT$ with $\varepsilon_1(t)=t^{-1}$, we have
\[
  [t_{1,2},t_{1,3}]=1.
\]
Consequently, every $t\in\cT$ satisfies
\[
  [t_{1,2},t_{1,3}]=1
  \quad\text{or}\quad
  [t_{2,1},t_{3,1}]=1.
\]
\end{lemma}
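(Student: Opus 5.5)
The plan is to apply the sign change $\varepsilon_1$ to the absorption relation. Under the hypothesis $\varepsilon_1(t)=t^{-1}$, this inverts both $t_{1,2}$ and $t_{1,3}$ while leaving the conjugating element untouched. Comparing the result with the original relation then forces commutation. The second statement should follow by relabelling, using the transposition $(1,2)$.

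\textbf{Step 1: how $\varepsilon_1$ acts on the relevant elements.} First, $t_{1,2}=t$, so $\varepsilon_1(t_{1,2})=t_{1,2}^{-1}$. Next, $t_{1,3}=\sigma(t)$ with $\sigma=(2,3)$. Since $\sigma$ fixes $1$, it commutes with $\varepsilon_1$ in $W$, so $\varepsilon_1(t_{1,3})=\sigma(\varepsilon_1(t))=t_{1,3}^{-1}$. Finally, I claim that $\varepsilon_1$ fixes $G(\{2,3\})$ pointwise. Its generators are $s_{2,3}=\pi(s)$ with $s\in\cT$ and $\pi$ a permutation sending $1\mapsto 2$ and $2\mapsto 3$. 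Then
\[
\varepsilon_1(\pi(s))=\pi\bigl(\varepsilon_{\pi^{-1}(1)}(s)\bigr),
\]
and $\pi^{-1}(1)\notin\{1,2\}$, so condition~(i) of \cref{def:coordinates} gives $\varepsilon_{\pi^{-1}(1)}(s)=s$.

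\textbf{Step 2: comparing the two relations.} By the absorption property there is $g\in G(\{2,3\})$ with $t_{1,2}t_{1,3}=g\,t_{1,3}\,g^{-1}$. Applying $\varepsilon_1$ and using Step~1 gives
\[
t_{1,2}^{-1}t_{1,3}^{-1}=g\,t_{1,3}^{-1}g^{-1}.
\]
Inverting both sides yields $t_{1,3}t_{1,2}=g\,t_{1,3}\,g^{-1}=t_{1,2}t_{1,3}$, which is exactly $[t_{1,2},t_{1,3}]=1$.

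\textbf{Step 3: the dichotomy.} By condition~(ii) of \cref{def:coordinates}, either $\varepsilon_1(t)=t^{-1}$, in which case Step~2 applies, or $\varepsilon_2(t)=t^{-1}$. In the second case put $s=(1,2)(t)\in\cT$. Then
\[
\varepsilon_1(s)=(1,2)\bigl(\varepsilon_2(t)\bigr)=s^{-1},
\]
so Step~2 applied to $s$ gives $[s_{1,2},s_{1,3}]=1$. It remains to identify these elements. We have $s_{1,2}=s=t_{2,1}$. Also $s_{1,3}=\tau(1,2)(t)$, where $\tau$ sends $1\mapsto1$ and $2\mapsto3$; the composite $\tau(1,2)$ sends $1\mapsto3$ and $2\mapsto1$, so $s_{1,3}=t_{3,1}$. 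Hence $[t_{2,1},t_{3,1}]=1$.

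The only point needing care is the equivariance bookkeeping in Step~1, in particular that $\varepsilon_1$ fixes $G(\{2,3\})$ pointwise. This relies on condition~(i) of \cref{def:coordinates} together with the conjugation formula $\pi\varepsilon_i\pi^{-1}=\varepsilon_{\pi(i)}$ in $W$.
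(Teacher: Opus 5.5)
Your proof is correct and follows the paper's argument: you apply $\varepsilon_1$ to the absorption identity (it inverts $t_{1,2}$ and $t_{1,3}$ and fixes the conjugator in $G(\{2,3\})$), invert, and compare, and you handle the case $\varepsilon_2(t)=t^{-1}$ via the template $(1,2)\cdot t$. Your Step~1 and the identification $s_{1,3}=t_{3,1}$ spell out the equivariance bookkeeping that the paper leaves implicit.
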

\begin{proof}
Put $X=t_{1,2}$ and $Y=t_{1,3}$, and choose
$\gamma\in G(\{2,3\})$ with $\gamma XY\gamma^{-1}=Y$.
If $\varepsilon_1(t)=t^{-1}$, then $\varepsilon_1$ inverts $X,Y$
and fixes $\gamma$. Applying it to the absorption identity and taking
inverses gives $\gamma YX\gamma^{-1}=Y$, whence $XY=YX$.
If instead $\varepsilon_2(t)=t^{-1}$, apply the first assertion to
the template $(1,2)\cdot t$, which is inverted by $\varepsilon_1$.
This gives the second alternative.
\end{proof}

\begin{definition}\label{def:local-commutators}
A set of generator templates $\cT$ satisfies the \emph{local commutator condition} if, for every $t,u\in\cT$, either $u=t^{\pm1}$ or there exist $\epsilon,\delta\in\{\pm1\}$ with
\[
[t_{1,2}^{\epsilon},u_{1,3}^{\delta}]
\in G(\{2,3,4\}).
\]
\end{definition}

Note that the commutator also lies in $G(\{1,2,3\})$ for obvious reasons. For the following definition note that for $t\in\cT$ we have $t=t_{1,2}$. 

\begin{definition}\label{def:local-conjugacy}
A set of generator templates $\cT$ satisfies
the \emph{local conjugacy condition} if $\cT$ consists of involutions and for every $t,u\in\cT$, 
there exist elements $s_{1,2},w_{1,2}$, conjugate to $t,u$, respectively, by
elements of $G(\{2,4\})$, satisfying the following conditions.
\begin{enumerate}[label=(\roman*)]
\item The products $s_{1,2}s_{1,3}$ and $w_{1,2}w_{1,3}$ are
conjugate to $s_{1,3}$ and $w_{1,3}$, respectively, by elements
of $G(\{2,3\})$.
\item The elements $s_{1,2}$ and $w_{1,3}$ commute.
\end{enumerate}
Here we put $s_{1,3}=(2,3)\cdot s_{1,2}$ and likewise for~$w$.
The elements $s_{1,2}$ and $w_{1,2}$ need not be elements of~$\cT$. 
They belong to $G(\{1,2,4\})$, and the auxiliary coordinate $4$
is suppressed in the notation.
\end{definition}

\cref{cor:applications-intro} in the introduction will be a consequence of the following general criterion. 

\begin{theorem}[Kazhdan criterion]\label{thm:main}
Let $\cT$ be a set of generator templates of a $W$-group $G$ that satisfies the absorption property. If, in addition, $\cT$ satisfies the local commutator condition or the local conjugacy condition, then $G_n$ has property~$(T)$ for all sufficiently large~$n$.
\end{theorem}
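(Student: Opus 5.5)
We argue by contradiction, following the strategy outlined in the introduction. Suppose there are infinitely many $n$ for which $G_n$ fails property~$(T)$. By Delorme--Guichardet, each such $G_n$ then admits an affine isometric action $\alpha_n$ on a real Hilbert space $H_n$ without fixed points. Since $S_n$ is a finite generating set, we choose a basepoint $x_n\in H_n$ that (almost) minimises the displacement function
\[
  f_n(x)=\sum_{s\in S_n}\|s x-x\|^2 .
\]
In the style of Gromov's and Shalom's limiting arguments, we replace $x_n$ by a point that is a near-minimum at every scale, and rescale so that $\max_{t\in\cT}\|t_{i,j}x_n-x_n\|=1$.

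The key observation is that $W_n$ acts on $G_n$ and permutes $S_n$. After averaging the action over $W_n$, i.e.\ passing to an induced or averaged version, we may assume the displacement $\|t_{i,j}x_n-x_n\|$ depends only on $t$ and not on $\{i,j\}$. Here a locality statement is needed: a minimiser cannot be displaced "only at few pairs", so the quantity $\|t_{i,j}x_n-x_n\|$ is uniform in the pair. We then pass to an ultralimit $(H,x)$ of $(H_n,x_n)$. This yields an affine isometric action of the colimit group $G$ with basepoint $x$ that is not fixed, because some $t\in\cT$ moves $x$ by distance $1$. Moreover, $x$ is "harmonic" in the sense that it minimises $\sum_{p\subseteq I}\sum_{s\in\cT_p}\|sx-x\|^2$ along appropriate averaging directions.

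Next, for fixed $t$ and $i$, we consider the affine operators $t_{i,j}$ as $j\to\infty$. By weak compactness in the ultralimit, these operators have a weak operator limit $P_{t,i}$. Commutation across disjoint pairs, condition~(iii) of \cref{def:coordinates}, combined with $W$-invariance, shows that the $t_{i,j}$ with distinct $j$ behave like an exchangeable sequence. A de Finetti/mean-ergodic argument then shows that $P_{t,i}$ is the orthogonal projection onto a closed affine subspace, namely the common fixed set of the tail. The same argument shows that these projections commute for different $(t,i)$. The harmonicity of $x$ gives the barycentric identity: $x$ is the barycenter of the points $t_{i,j}x$ as $j$ varies, so $P_{t,i}x=x$.

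It remains to upgrade $P_{t,i}x=x$ to $t_{1,2}x=x$, which is where the absorption property enters. Write $\gamma t_{1,2}t_{1,j}\gamma^{-1}=t_{1,j}$ with $\gamma\in G(\{2,j\})$ (obtained from \cref{def:absorption} by applying a permutation). Using \cref{lem:absorption-commutation}, we may assume that the $t_{1,j}$ commute, after replacing $t$ by $(1,2)\cdot t$ if needed. Passing to the limit in $j$ gives $t_{1,2}P=P$ on appropriate vectors, modulo the effect of $\gamma$. In the case of the local commutator condition, controlling $\gamma$ uses the fact that the commutators $[t_{1,2}^{\epsilon},u_{1,j}^{\delta}]$ lie in $G(\{2,j,\ast\})$ and are asymptotically absorbed by the limit. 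This shows that the fixed space of $P_{u,1}$ is preserved by $t_{1,2}$, so $t_{1,2}x=P_{t,1}t_{1,2}x$. Combined with $P_{t,1}x=x$ and the isometry property, this forces $t_{1,2}x=x$. In the local conjugacy case, we apply the same reasoning to the conjugates $s_{1,2},w_{1,2}$, using the commutation $[s_{1,2},w_{1,3}]=1$ in place of the local commutator condition and the fact that the templates are involutions. Since the $t_{i,j}$ generate $G$, the point $x$ is then a global fixed point, which is a contradiction.

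I expect the main obstacle to be the identification of the weak limits as commuting projections together with the barycentric step $P_{t,i}x=x$. This requires a careful choice of basepoint so that harmonicity survives the ultralimit. I would first try a scaled near-minimiser in the spirit of Shalom, combined with the $W$-symmetrisation.
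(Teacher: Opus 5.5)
Your outline (contradiction, harmonic basepoint, $W_n$-symmetrisation, ultralimit, weak limits of $t_{1,j}$, barycentre, absorption) has the right shape. However, the steps that carry the weight are either unjustified or rest on arguments that fail.

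The locality you invoke is the wrong one. After inducing to $G_n\rtimes W_n$, uniformity of $\|t_{i,j}x_n-x_n\|$ in the pair is automatic. What is actually needed is that, in the limit, elements with disjoint supports displace the basepoint orthogonally. Consequently the linear part of any $h\in G(J)$ fixes $H_I$ (the closed span of $G(I)o$) pointwise when $I\cap J=\emptyset$. This is a real lemma. One shows $\langle go_n,ho_n\rangle\ge0$ for generators on disjoint pairs: a sign change inverts $g$ and fixes $h$, and then Euler's quadrilateral law applies. The identity $0=C_n+2(n-2)B_n+\binom{n-2}{2}A_n$, which comes from $\sum_p q_p=0$, then forces $A_n=O(n^{-1})$ and $B_n\le0$.

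Next, exchangeability cannot make the weak limit of $t_{1,j}$ a projection. It is compatible with all $t_{1,j}$ acting by one and the same nontrivial orthogonal map on a permutation-invariant subspace. Moreover, condition~(iii) says nothing about $t_{1,j},t_{1,k}$, since $\{1,j\}\cap\{1,k\}\neq\emptyset$. Idempotency genuinely needs absorption. Write $U_a=\ell(t_{1,a})$ and $P_t$ for the linear part of the weak limit $E_t$. Then $\gamma_{a,b}t_{1,a}t_{1,b}\gamma_{a,b}^{-1}=t_{1,b}$ with $\gamma_{a,b}\in G(\{a,b\})$, together with locality, gives $\langle v,U_aU_bw\rangle=\langle v,U_bw\rangle$ for $v,w\in H_I$, hence $P_t^2=P_t$. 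A separate absorption argument shows that the translation part has no component along $\operatorname{Im}P_t$.

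Likewise, commutation of $E_t,E_u$ for $t\neq u$ is not ``the same argument''. This is exactly where one of the two local conditions is spent:
\begin{itemize}
\item the local commutator condition: $[t_{1,a},u_{1,b}]\in G(\{a,b,c\})$ is invisible to $\operatorname{pr}_I$;
\item the local conjugacy condition: pass to exactly commuting conjugates $s_a,w_b$ and let the auxiliary index tend to infinity.
\end{itemize}
Your proposal uses these hypotheses only at the end, where they do no work.

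The barycentric step also fails as stated. Harmonicity only says that $o$ is the barycentre of its whole $S_n$-orbit, i.e.\ $\sum_p q_p=0$ over all pairs and all templates. It gives no barycentric identity for $(t_{1,j}x)_j$ with $t$ fixed. What survives in the limit, via $\langle q_{1,a},q_{1,b}\rangle\le0$, is only that $o$ is the barycentre of $(E_t(o))_{t\in\cT}$. To get $E_t(o)=o$ for each $t$ separately, you need the pairwise commutation. The reflections in the fixed sets of the $E_t$ then generate a finite group, which has a fixed point $q$ by Bruhat--Tits. Now $E_t(o)\perp q-E_t(o)$ yields $\sum_t\|E_t(o)\|^2=\langle q,\sum_tE_t(o)\rangle=0$.

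Your last step is a non sequitur. That $x$ and $t_{1,2}x$ both lie in the image of an affine projection does not force them to coincide, isometry or not. Also, your conjugator $\gamma\in G(\{2,j\})$ involves the fixed index $2$, so its effect is not killed as $j\to\infty$. The paper instead turns $E_t(o)=o$ into $so\perp H_I$ for $s\in\cT_p$ with $p\not\subseteq I$. Then it takes commuting $X=u_{1,2}$, $Y=u_{1,3}$ and $K\in G(\{2,3\})$ with $KYK^{-1}=XY$, and computes $\|Xo\|^2+\|Yo\|^2+\|Ko\|^2=d(XYo,Ko)^2=d(Y^{-1}o,K^{-1}o)^2=\|Yo\|^2+\|Ko\|^2$. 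This forces $Xo=o$.
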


\section{A limiting affine action}\label{sec:limits}

The following result, which is our starting point, can be traced back 
to the harmonic-map existence theorem of Mok \cite{Mok95} and
Korevaar--Schoen \cite{KS97}. It can be found in \cite[Theorem~A.2 and (A.5)]{Kleiner10}. 

\begin{theorem}\label{thm:harmonic-basepoint}
Let $G$ be generated by a finite symmetric set $S$.
If $G$ does not have property~$(T)$, then $G$ admits an affine isometric
action on a real affine Hilbert space for which the energy
\[
  \mathcal E(x)=\frac1{|S|}\sum_{s\in S}d(x,sx)^2
\]
attains a positive minimum. Every minimizing point is the barycenter of its $S$-orbit. 
\end{theorem}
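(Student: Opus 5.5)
The plan is to prove the statement by the standard variational (harmonic map) route. Let $(\mathcal H,V,\pi,b)$ be the real Hilbert space, the linear isometric representation $\pi$, and the cocycle $b$ of an affine isometric action $\alpha_g(x)=\pi(g)x+b(g)$ without a fixed point. Since $G$ does not have property~$(T)$, the Delorme--Guichardet theorem provides such an action. For a real Hilbert space this follows from the complex version by restriction of scalars, or directly by Kazhdan's theorem that $(T)$ is equivalent to $(FH)$. Put $f(x)=\mathcal E(x)$ and $m=\inf_x f(x)$. Two facts are then needed: (a) after possibly replacing the action by another fixed-point-free action, the infimum $m$ is attained; and (b) $m>0$. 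Fact (b) is immediate once (a) holds: if $f(x_0)=0$, then $x_0$ is fixed by every $s\in S$, hence by $G=\langle S\rangle$, which contradicts the absence of fixed points.

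For (a), I would use the scaling/ultralimit argument of Korevaar--Schoen, Kleiner and Shalom. Fix a basepoint and consider the displacement function. Since the action has no fixed point, the orbit is unbounded, and so is every orbit. Because $f$ is continuous and convex, it is weakly lower semicontinuous, so if $m$ is attained we are done. Otherwise, pick an almost-minimizing sequence. The key step is to use "almost harmonic" points: for every $\lambda>1$ there is a point $x$ with $f(y)\ge f(x)/\lambda^2$ for all $y$ in a ball $B(x,R\sqrt{f(x)})$, where $R$ is large. This follows from a Bishop--Phelps/Ekeland-type argument. If no such point existed, one could build a sequence $x_k$ along which $f$ drops by a factor $\lambda^{-2}$ each step, while the distances stay controlled by a geometric series. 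The sequence would converge to a limit point with $f$ smaller than $m$, or with $f=0$, i.e.\ a fixed point; both are impossible. Taking $R_k\to\infty$ and rescaling the metric by $f(x_k)^{-1/2}$, we form the ultralimit of the pointed affine spaces $(\mathcal H,x_k)$ with the rescaled metric. This ultralimit is again an affine Hilbert space carrying an isometric $G$-action, since the generators move the basepoints a bounded amount. Its basepoint $x_\omega$ has energy exactly $1$, and every point $y$ has energy at least $\lim\lambda_k^{-2}=1$. Hence the energy attains a positive minimum at $x_\omega$. The limiting action has no fixed point: a fixed point would have energy $0<1$.

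For the barycenter claim, differentiate at a minimizer $x$. For $y=x+v$ we have $sy=sx+\pi(s)v$, hence
\[
d(y,sy)^2=\|sx-x\|^2+2\langle x-sx,\,v-\pi(s)v\rangle+\|v-\pi(s)v\|^2 .
\]
The first variation therefore vanishes exactly when $\sum_s\langle x-sx,\,v-\pi(s)v\rangle=0$ for all $v$. Using $\pi(s)^*=\pi(s^{-1})$, the symmetry of $S$, and the identity $s^{-1}x-x=-\pi(s^{-1})(sx-x)$, the gradient is $4\sum_s(x-sx)$. So the condition is $\sum_{s}(sx-x)=0$, which says that $x=\frac1{|S|}\sum_s sx$ is the barycenter of its $S$-orbit.

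The main obstacle is the existence step (a), specifically the construction of the almost-minimizing points with uniform local control. This is where the Ekeland-type descent argument must be done carefully to avoid the degenerate cases $f\to 0$ or escape to infinity. Since the paper cites Kleiner's Theorem~A.2 for exactly this, I would follow his argument.
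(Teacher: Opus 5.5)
Your proposal is correct and is essentially the argument behind Kleiner's Theorem~A.2 and (A.5), which the paper cites instead of giving its own proof. That argument has three steps: a descent argument producing points $x$ with $\mathcal E\ge \mathcal E(x)/\lambda^2$ on $B(x,R\sqrt{\mathcal E(x)})$, a rescaled ultralimit along $\lambda_k\to1$ and $R_k\to\infty$, and the first-variation identity $\sum_{s\in S}(sx-x)=0$ at a minimizer.

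Two small corrections. Your descent sequence satisfies $\mathcal E(x_k)\to0$, so it always converges to a fixed point, and the alternative that its limit has energy below $m$ never arises. The equivalence of $(T)$ and $(FH)$ is due to Delorme and Guichardet, not Kazhdan.
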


Throughout this section, we use only the generator-template assumptions of
\cref{def:coordinates}; neither the absorption property nor either local
condition is needed. We argue by contradiction and let $\mathcal N$ be an
infinite set of integers $n$ for which $G_n$ does not have property~$(T)$.
Put $M=|\cT|$. 

By \cref{thm:harmonic-basepoint}, for each $n\in\mathcal N$ there is
an affine isometric action of $G_n$ on a real affine Hilbert space $X_n$
for which we can and do choose a minimizing point~$o_n$ with respect to the energy for the generating set~$S_n$.

We induce this affine action to $G_n\rtimes W_n$ as in 
\cite[Section~2.5]{BHV08}. The induced affine Hilbert space~$H_n$ consists of the
maps $f:G_n\rtimes W_n\to X_n$ satisfying $f(kg)=g^{-1}f(k)$ for
all $k\in G_n\rtimes W_n$ and $g\in G_n$.
Evaluation at $w^{-1}$, $w\in W_n$, identifies the induced affine space with $X_n^{W_n}$,
equipped with its Hilbert product metric. The induced affine isometric action of $G_n\rtimes W_n$ is given by $(kf)(z)=f(k^{-1}z)$.
In this product model of~$H_n$, its restrictions to $G_n$ and $W_n$ are
\[
  (gy)_w=(w\cdot g)y_w,
  ~~\text{and}~~ (vy)_w=y_{wv}
  \quad\text{for $y\in H_n$, $g\in G_n$ and $v,w\in W_n$.}
\]
The diagonal basepoint $(o_n,\dots, o_n)$ is fixed by $W_n$ and is the barycenter of 
its $S_n$-orbit, since $W_n$ permutes $S_n$. The mean squared
displacement over $\cT\subset G_n$ at the diagonal basepoint is positive, since the basepoint in $X_n$ is not fixed by $S_n$ and $W_n\cdot\cT=S_n$. Thus, we can rescale 
the metric $d_n$ on $H_n$ such that 
\[
  \frac{1}{M}\sum_{t\in\cT}d_n(o_n,to_n)^2=1.
\]
Here, by a slight abuse of notation, we write $o_n$ for the diagonal basepoint $(o_n,\dots,o_n)$.

By the normalization and $W_n$-symmetry, every generator in $S_n$
displaces $o_n$ by at most $\sqrt M$.
In particular, a fixed word of length $L$ in the generators $t_{i,j}$
displaces $o_n$ by at most $L\sqrt M$ for all sufficiently large
$n\in\mathcal N$.

We regard each $H_n$ as a Hilbert space with origin $o_n$.
Choose a nonprincipal ultrafilter $\omega$ on $\mathcal N$ and take the
pointed ultralimit $(H_\infty, d_\infty, o_\infty)$ of $(H_n, d_n, o_n)$.
This is a Hilbert space with origin $o_\infty=[o_n]_\omega$.
Explicitly, $H_\infty$ consists of sequences $(y_n)$ at uniformly bounded distance
from $o_n$ identified when $\lim_\omega d_n(y_n,y'_n)=0$.

Since $W_n$ fixes $o_n$ and an element in $G_n$ of word length~$L$ displaces $o_n$ by at most $L\sqrt M$, every fixed element of $G\rtimes W$
displaces the basepoints by a uniformly bounded distance.  
Therefore 
\[
  g[y_n]_\omega=[gy_n]_\omega
\]
defines an affine isometric action of $G\rtimes W$. For each fixed
$g$, the right-hand side is defined for all sufficiently large $n$. 

We now restrict the action
to the closed affine span $H\subset H_\infty$ of the orbit $Go_\infty$. This span is also
$W$-invariant, since $W$ fixes $o_\infty$ and normalizes $G$. In the sequel we just write $d$ for the restriction of $d_\infty$ to $H$ and $o$ for $o_\infty$ as an element of $H$. The normalization above implies that 
\begin{equation}\label{eq:limit-normalization}
  \sum_{t\in\cT}d(o,to)^2=M.
\end{equation}

We recall Euler's quadrilateral law which is needed in the proof of \cref{lem:locality}. 
\begin{lemma}[Euler's quadrilateral law]\label{lem:square}
Let $g,h$ be commuting affine isometries of an affine Hilbert space. 
For any point $z$, let $m_1$ be the midpoint of the segment joining $z$ to
$ghz$, and let $m_2$ be the midpoint of the segment joining $gz$ to $hz$. Then
\begin{equation*}
  4d(m_1,m_2)^2
  =2d(z,gz)^2+2d(z,hz)^2-d(z,ghz)^2-d(gz,hz)^2.
\end{equation*}
\end{lemma}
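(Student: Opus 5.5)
Since the displacements $g$ and $h$ are isometries, all four points $z,gz,hz,ghz$ lie in an affine Hilbert space, and the identity is an instance of the parallelogram law. The plan is to pass to vectors based at $z$ and reduce everything to inner products. Write $a=gz-z$, $b=hz-z$ and $c=ghz-z$. Then $m_1-z=\tfrac12 c$ and $m_2-z=\tfrac12(a+b)$, so
\[
4d(m_1,m_2)^2=\|c-a-b\|^2 .
\]
The right-hand side of the claimed formula is
\[
2\|a\|^2+2\|b\|^2-\|c\|^2-\|a-b\|^2=\|a\|^2+\|b\|^2+2\langle a,b\rangle-\|c\|^2=\|a+b\|^2-\|c\|^2 .
\]

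The identity is therefore equivalent to $\|c-(a+b)\|^2=\|a+b\|^2-\|c\|^2$. Expanding the left side, this becomes $\|c\|^2=\langle c,a+b\rangle$, i.e.\ $\langle c,\,c-a-b\rangle=0$. In words: the vector from $z$ to $m_1$ must be orthogonal to the vector from $m_2$ to $m_1$. This orthogonality is where commutativity enters.

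To prove it, I will use isometry invariance of distances, which avoids decomposing the maps into linear parts. Since $g,h$ are isometries and $gh=hg$,
\[
d(gz,ghz)=d(z,hz),\qquad d(hz,ghz)=d(hz,hgz)=d(z,gz).
\]
In vector form these read $\|c-a\|=\|b\|$ and $\|c-b\|=\|a\|$. Expanding and adding the two equations gives
\[
2\|c\|^2-2\langle c,a+b\rangle+\|a\|^2+\|b\|^2=\|a\|^2+\|b\|^2 ,
\]
which is exactly $\|c\|^2=\langle c,a+b\rangle$, as needed.

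The only real point is spotting that the two distance equalities above are the right input; everything else is bookkeeping with the parallelogram law. I will also note that the quadrilateral $z,gz,ghz,hz$ has opposite sides of equal length, and the argument uses only this fact. No linear structure of the isometries and no finite-dimensionality is needed, so the identity holds in the affine Hilbert spaces $H_n$ and $H$ to which it will be applied.
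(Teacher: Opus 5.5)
Your proof is correct and takes the same route as the paper. You use commutativity only to get equal opposite sides of the quadrilateral $z,gz,ghz,hz$, and your inner-product expansion is a hands-on verification of Euler's quadrilateral law in this case, which the paper simply cites.
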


\begin{proof}
Since $g,h$ commute, the opposite sides have equal lengths.
Thus the assertion follows from Euler's quadrilateral law, which is 
a generalization of the parallelogram law in case the quadrilateral $z,hz, ghz, gz$ is 
a parallelogram. 
\end{proof}

\begin{remark}
We consider the affine Hilbert spaces $H_n$ and $H$ as Hilbert spaces with respect to the origins
$o_n$ and $o$, respectively. The group actions are affine isometric and denoted by $gx$ or $g\cdot x$ for $g\in G$ and $x\in H$. We write $\ell(g)$ for the linear part of $g\in G$.
The entire $W$-action fixes $o$, so it is linear and orthogonal.
\end{remark}

The following result can be understood as a locality result saying that $G(I)$ and $G(J)$ do not interfere when $I\cap J=\emptyset$. It is inspired by~\cite[Lemma~2]{Nitsche26}. 

\begin{lemma}\label{lem:locality}
The affine $G$-action on $H$ has the following properties.
\begin{enumerate}[label=(\roman*)]
\item If $g\in G(I)$, $h\in G(J)$, and $I\cap J=\varnothing$, then
$o,go,gho, ho$ form a (possibly degenerate) rectangle. In particular,
\[
  \langle go, ho\rangle=0,
  \quad\text{and}\quad gho=go+ho.
\]
\item For $x\ne a$, let $q_{x,a}\in H$ be the barycenter of
$(t_{x,a}o)_{t\in\cT}$. For distinct $x,a,b$, we have
\[
  \langle q_{x,a},q_{x,b}\rangle\leq0.
\]
\end{enumerate}
\end{lemma}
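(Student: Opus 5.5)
The plan is to work with inner products in $H$, using $o$ as origin, and to exploit two facts. First, the orthogonal $W$-action fixes $o$, so it can move supports around freely. Second, the diagonal basepoint $o_n$ is the barycenter of its $S_n$-orbit. For part~(i), fix $g\in G(I)$ and $h\in G(J)$ with $I\cap J=\varnothing$. Commutation $gh=hg$ gives
\[
go+\ell(g)ho=gho=ho+\ell(h)go .
\]
So the rectangle statement is equivalent to the two identities $\ell(g)ho=ho$ and $\langle go,ho\rangle=0$; by symmetry we also get $\ell(h)go=go$. I will derive both from \cref{lem:square} once the defect
\[
D=2d(o,go)^2+2d(o,ho)^2-d(o,gho)^2-d(go,ho)^2
\]
and the inner product $\langle go,ho\rangle$ are shown to vanish.

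To get these vanishings, I would use exchangeability.
\begin{enumerate}[label=(\alph*)]
\item Choose permutations $\sigma_1,\sigma_2,\dots\in W$ that fix $I$ pointwise and move $J$ to pairwise disjoint sets $J_k$, all disjoint from $I$. Put $h_k=\sigma_k\cdot h$ and $v_k=h_ko=\sigma_k(ho)$.
\item Since $\sigma_k$ is orthogonal, fixes $o$ and fixes $g$, all the quantities $\langle go,v_k\rangle$, $\|v_k\|$ and $\langle v_k,v_l\rangle$ for $k\ne l$ are independent of $k$ and $l$. To see this for pairs, use a permutation that fixes $I$ and maps $(J_k,J_l)$ to $(J_{k'},J_{l'})$.
\item Pass to the finite stage $H_n$. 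There the barycenter property says $\sum_{s\in S_n}(so_n-o_n)=0$. Group the terms by the pairs $q$, which gives $\sum_{q}b_q=0$ with $b_q$ the barycenter of $(so_n-o_n)_{s\in\cT_q}$.
\item Take the inner product of this identity with $go_n-o_n$, where $g\in G(K)$. The roughly $n^2/2$ pairs $q$ disjoint from $K$ all contribute the same value $c_n$, by $W_n$-symmetry. The $O(n)$ remaining pairs contribute a quantity bounded in terms of the word length of $g$ and $\sqrt M$. Hence $c_n=O(1/n)$, and in the ultralimit $\langle fo,b_q\rangle=0$ whenever $f\in G(K)$ and $q\cap K=\varnothing$.
\end{enumerate}

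Part~(ii) should then follow quickly. Write $q_{x,b}$ as a barycenter of points $t_{x,b}o$, and run the same counting argument in $H_n$ with $go$ replaced by $q_{x,a}$. Pairs disjoint from $\{x,a\}$ contribute $0$ in the limit. The pairs $\{x,c\}$ with $c\ne a$ all give the common value $\langle q_{x,a},q_{x,b}\rangle$, by symmetry in $c$. The pairs $\{a,c\}$ give a second common value. The pair $\{x,a\}$ itself contributes $\|q_{x,a}\|^2\ge 0$. Normalizing by $n$, the $O(n)$ terms of the first two types must nearly cancel, and the single nonnegative term is negligible. This suggests that $\langle q_{x,a},q_{x,b}\rangle$ plus the symmetric quantity $\langle q_{x,a},q_{a,c}\rangle$ is $\le 0$. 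The transposition $(x,a)$, together with condition~(ii) of \cref{def:coordinates} relating $t_{a,x}$ to $t_{x,a}$, should identify the two quantities, giving the sign.

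I expect the main obstacle to be step (d) for a general element $h\in G(J)$, as opposed to barycenters of generators. The zero-sum identity only controls sums of single-generator displacements. I would handle this by writing $h=s_1\cdots s_L$ and telescoping,
\[
ho=\sum_{i}\ell(s_1\cdots s_{i-1})\,s_io .
\]
Each summand is a linear image of a single-generator displacement, and I would propagate orthogonality by induction on $L$. The required intermediate claim is that $\ell(g)$ fixes $so$ for every generator $s$ supported away from $I$. I would first try to obtain this claim from the zero defect $D=0$ for single generators: using exchangeability, average the quadrilateral identity over the copies $h_k$ and show that the average defect is forced to be $0$ by the bound $\|\frac1N\sum_k v_k\|^2\to\langle v_1,v_2\rangle$ together with the vanishing of inner products obtained in step (d).
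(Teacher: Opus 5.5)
There is a genuine gap, and it is the same in both parts. Your reduction of the rectangle to $\ell(g)ho=ho$ and $\langle go,ho\rangle=0$ is correct, and so is step~(d). But you never use the sign-change half of condition~(ii) of \cref{def:coordinates}, and part~(i) is false without it. Step~(d) only gives $\langle fo,q_p\rangle=0$, that is, orthogonality to the \emph{barycenter} of $(so)_{s\in\cT_p}$. Nothing in your plan upgrades this to $\langle go,so\rangle=0$ for a single generator $s$, which is exactly the base case your telescoping needs. The exchangeability bound only yields $\langle v_1,v_2\rangle\ge0$. Averaging the defect over the copies $h_k$ would need $\langle v_1,v_2\rangle=0$, which is circular. (Your intermediate claim is in fact reachable without signs: for commuting $g,h$ one has $\|\ell(g)ho-ho\|^2=2\langle go+g^{-1}o,ho\rangle$, and summing over the inverse-closed set $\cT_p$ and using~(d) forces every term to vanish. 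But then $g\mapsto\langle go,ho\rangle$ is merely a homomorphism $G(I)\to\mathbb{R}$, and nothing in your argument kills it.)

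Here is a counterexample. Let $G$ be free abelian on symbols $x_p$, let $\cT=\{x_{\{1,2\}}^{\pm1}\}$, let sign changes act trivially, and let each $x_p$ act on $\mathbb{R}$ by $y\mapsto y+1$. Every axiom except the sign condition holds, and all your steps go through (even $\ell(g)=\mathrm{id}$). Yet in the limit all generators translate by the same unit vector, so $\langle x_{\{1,2\}}o,x_{\{3,4\}}o\rangle=1$.

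The missing idea is the following. For $g\in\cT_{\{i,j\}}$ and $h\in\cT_q$ with $q\cap\{i,j\}=\varnothing$, one of $\varepsilon_i,\varepsilon_j$ inverts $g$ and fixes both $h$ and $o_n$. Hence $d_n(go_n,ho_n)=d_n(g^{-1}o_n,ho_n)=d_n(o_n,gho_n)$. The quadrilateral therefore has equal diagonals, and \cref{lem:square} with polarization gives $\langle go_n,ho_n\rangle_n=d_n(m_1,m_2)^2\ge0$ already in $H_n$. So $A_n=\langle q_{\{1,2\}},q_{\{3,4\}}\rangle_n$ is a mean of nonnegative terms, each at most $M^2A_n=O(1/n)$. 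This gives the rectangle for pairs of generators. The propagation to general $g\in G(I)$, $h\in G(J)$ then works in two rounds: first general $g$ against generators $h$, then general $h$.

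For part~(ii), the same sign is missing, and your counting has a scaling error. There are $\binom{n-2}{2}$ pairs disjoint from $\{x,a\}$, each contributing $A_n$, which step~(d) only bounds by $O(1/n)$. Their total is therefore of order $n$, exactly the order of the contribution $2(n-2)B_n$ from the pairs meeting $\{x,a\}$ in one point, so you cannot discard them. Note also that $q_{x,a}=q_{\{x,a\}}$, so your ``two quantities'' both equal $B_n$ by permutation symmetry alone; identifying them is not the issue. The identity $0=C_n+2(n-2)B_n+\binom{n-2}{2}A_n$ says nothing about the sign of $B_n$ unless you know $A_n\ge0$. It is consistent with $B_n>0$ and $A_n\approx-1/n$, and positive semidefinite symmetric Gram matrices of that form with $\sum_p q_p=0$ exist. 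Once you have the finite-stage inequality $A_n\ge0$ from the sign-change argument, together with $C_n\ge0$, the identity gives $B_n\le0$, hence~(ii) in the ultralimit.
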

\begin{proof}
Fix $n\in\mathcal N$ with $n\geq4$. For each pair
$p\subseteq\{1,\dots,n\}$, let $q_p\in H_n$ be the barycenter of
$(so_n)_{s\in\cT_p}$. The barycenter of the $q_p$ is $o_n$.
By symmetry, normalization and Cauchy--Schwarz,
\[
  \|q_p\|_n^2
  =\Bigl\|\frac1M\sum_{s\in\cT_p}so_n\Bigr\|_n^2
  \le\frac1M\sum_{s\in\cT_p}\|so_n\|_n^2=1.
\]
Taking the inner product of $\sum_p q_p=0$ with $q_{\{1,2\}}$ gives
$\sum_p\langle q_p,q_{\{1,2\}}\rangle_n=0$. Set
\[
  A_n=\langle q_{\{1,2\}},q_{\{3,4\}}\rangle_n,\qquad
  B_n=\langle q_{\{1,2\}},q_{\{1,3\}}\rangle_n,\qquad
  C_n=\|q_{\{1,2\}}\|_n^2.
\]
By permutation symmetry, $\langle q_p,q_{\{1,2\}}\rangle_n$ depends only
on $|p\cap\{1,2\}|$. Its value is $C_n$ for $p=\{1,2\}$,
$B_n$ for the $2(n-2)$ pairs meeting $\{1,2\}$ in one element,
and $A_n$ for the $\binom{n-2}{2}$ disjoint pairs. Thus
\begin{equation}\label{eq:count}
  0=C_n+2(n-2)B_n+\binom{n-2}{2}A_n.
\end{equation}
Their absolute values are at most one.
Let $g\in\cT_p$ and $h\in\cT_q$, where
$p,q\subseteq\{1,\dots,n\}$ are disjoint pairs. Writing $p=\{i,j\}$,
conditions~(i) and~(ii) of \cref{def:coordinates} ensure that one of
$\varepsilon_i,\varepsilon_j$ inverts $g$ and fixes $h$.
This sign change acts on $H_n$ by an isometry fixing $o_n$ and gives
$d_n(go_n,ho_n)=d_n(g^{-1}o_n,ho_n)=d_n(o_n,gho_n)$.
If $m_1,m_2$ are the diagonal midpoints of the quadrilateral
$o_n,go_n,gho_n,ho_n$, polarization and \cref{lem:square} give
\[
  \langle go_n,ho_n\rangle_n=d_n(m_1,m_2)^2\geq0.
\]
By bilinearity, $A_n$ is the mean of
these nonnegative terms for $g\in\cT_{\{1,2\}}$ and
$h\in\cT_{\{3,4\}}$. Equation~\eqref{eq:count} forces 
$B_n\leq0$ and $A_n=O(n^{-1})$. Each such term is bounded
by $M^2A_n$, also for any other disjoint pair of index sets by symmetry.
In the ultralimit, the diagonal midpoints must coincide, and we have 
\[
  d(go,ho)^2=d(o,go)^2+d(o,ho)^2.
\]
Hence the quadrilateral is a rectangle, proving~(i) when $g$ and $h$
are generators.
The inequality $B_n\leq0$ gives~(ii) in the limit, since finite
barycenters and inner products commute with the ultralimit.

Finally, we prove the rectangle property when the group elements are not necessarily generators. 
Fix a generator $h\in\cT_q$ with $q\subseteq J$.
For every generator $s$ supported on $I$, the rectangle property gives
\[
  \ell(s)ho=ho,\qquad so\perp ho.
\]
Hence $s$ preserves $(ho)^\perp$, and its linear part $\ell(s)$ 
fixes $ho$. Both properties are preserved under composition.
Hence, for every $g\in G(I)$,
\[
  go\perp ho,\qquad \ell(g)ho=ho.
\]
In particular, $gho=go+ho$. Since $g$ and $h$ commute, this also yields $\ell(h)go=go$.

Now fix an arbitrary $g\in G(I)$.
We have shown that every generator $h$ supported on $J$ satisfies
$\ell(h)go=go$ and $ho\perp go$.
Applying the same argument to these generators yields both
identities for every $h\in G(J)$. So we get 
\[
  go\perp ho,\qquad gho=hgo=go+ho,
\]
which proves the rectangle property for arbitrary $g,h$.
\end{proof}

\begin{remark}\label{rem: consequence about projections}
For finite $I\subset\N$, let $H_I$ be the closed linear span of
$G(I)o$ and let $\operatorname{pr}_I$ be the orthogonal projection
onto $H_I$. These subspaces have dense union. By
\cref{lem:locality}(i), disjoint index sets give orthogonal subspaces, and
\begin{equation}\label{eq:local-projection}
  \operatorname{pr}_I\circ h=\operatorname{pr}_I
  \quad\text{ for $h\in G(J)$ and $I\cap J=\varnothing$.}
\end{equation}
The linear part $\ell(h)$ fixes $H_I$ pointwise
and $ho$ is perpendicular to it. A permutation $\sigma$ fixing
$I$ pointwise also fixes $H_I$ pointwise. Similarly, it satisfies
$\operatorname{pr}_I\circ\sigma=\operatorname{pr}_I$.
\end{remark}

\section{The affine geometry at infinity}\label{sec:projections}

In this section we analyze in detail the affine $G$-action on the Hilbert space~$H$ we obtained as a limit in the previous section by assuming that there are infinitely many groups $G_n$ without property $(T)$ in the setting of \cref{thm:main}. 

We say that affine operators $A_j:H\to H$ on a Hilbert space~$H$ \emph{converge weakly} to~$A$ if,
for every $y\in H$, the points $A_j(y)$ converge weakly to $A(y)$ in $H$.
Equivalently, their linear parts $\ell(A_j)$ converge to $\ell(A)$ in the weak
operator topology, and the points $A_j(o)$ converge weakly to $A(o)$.
We call an affine operator \emph{contractive} if it is $1$-Lipschitz.

\begin{remark}\label{rem: convergence on dense subspace}
Let $H'\subset H$ be a dense linear subspace and let $A_j:H\to H$
be continuous affine operators. Suppose that
$\bigl(\langle A_jx,y\rangle\bigr)_j$ is Cauchy for every $x,y\in H'$
and that $\bigl(\|A_jz\|\bigr)_j$ is bounded for every $z\in H$.
Then $(A_j)_j$ converges weakly to an affine operator.

Set $C=\sup_j\|A_j(o)\|$. Since
$\ell(A_j)z=A_jz-A_j(o)$, the uniform boundedness principle~\cite[Theorem~2.6, p.~45]{Rudin91} implies that
$L=\sup_j\|\ell(A_j)\|<\infty$. For $x,y\in H$ and $x',y'\in H'$,
\begin{align*}
  |\langle A_jx-A_kx,y\rangle|
  &\le |\langle A_jx'-A_kx',y'\rangle|
    +\|A_jx'-A_kx'\|\|y-y'\|\\
  &\qquad\quad+\bigl(\|A_jx-A_jx'\|+\|A_kx-A_kx'\|\bigr)\|y\|\\
  &\le |\langle A_jx'-A_kx',y'\rangle|
    +2(C+L\|x'\|)\|y-y'\|\\
  &\qquad\quad+2L\|x-x'\|\|y\|.
\end{align*}
Choose $x',y'$ close to $x,y$, and then $j,k$ sufficiently large.
The right-hand side becomes arbitrarily small, so
$\bigl(\langle A_jx,y\rangle\bigr)_j$ is Cauchy for every $x,y\in H$.
By weak sequential completeness of $H$, the points $A_j(x)$ converge
weakly to a point $A(x)$ for every $x\in H$. The limit map is affine,
and its linear part has operator norm at most~$L$.
\end{remark}

\begin{lemma}\label{lem:weak-limits}
For every $x\in\N$ and $t\in\cT$, the affine operators $t_{x,a}$
converge weakly.
\end{lemma}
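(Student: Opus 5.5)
We read the statement as asserting that, for fixed $x$ and $t$, the sequence $(t_{x,a})_{a}$ with $a\to\infty$ (over $a\neq x$) converges weakly. The plan is to verify the hypotheses of \cref{rem: convergence on dense subspace}, using as dense subspace $H'=\bigcup_I H_I$, the union over finite $I\subset\N$. This union is dense by \cref{rem: consequence about projections}, and it is a linear subspace because the $H_I$ increase with $I$.

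\textbf{Boundedness.} Each $t_{x,a}$ is an isometry, so
\[
\|t_{x,a}z\|\le \|z\|+\|t_{x,a}o\|.
\]
A permutation moving $\{1,2\}$ to $\{x,a\}$ fixes $o$ and carries $t$ to $t_{x,a}$. Hence $\|t_{x,a}o\|=\|to\|\le\sqrt M$ by \eqref{eq:limit-normalization}, which gives the required uniform bound.

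\textbf{Cauchy property on $H'$.} Fix $y,y'\in H'$; both lie in some common $H_I$ with $I$ finite, and we may enlarge $I$ so that $x\in I$. We claim that for all $a,b\notin I$ with $a,b\neq x$,
\[
\langle t_{x,a}y,y'\rangle=\langle t_{x,b}y,y'\rangle,
\]
so the sequence is eventually constant. To prove the claim, let $\sigma$ be the transposition $(a,b)$. Then $\sigma$ fixes $I$ pointwise, so by \cref{rem: consequence about projections} it fixes $H_I$ pointwise; it also fixes $o$ and acts orthogonally. Conjugating gives $\sigma t_{x,a}\sigma^{-1}=t_{x,b}$. Therefore
\[
\langle t_{x,b}y,y'\rangle=\langle \sigma t_{x,a}\sigma^{-1}y,y'\rangle=\langle t_{x,a}y,\sigma^{-1}y'\rangle=\langle t_{x,a}y,y'\rangle,
\]
where we used $\sigma^{-1}y=y$ and $\sigma^{-1}y'=y'$. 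This proves the claim.

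With both hypotheses verified, \cref{rem: convergence on dense subspace} yields a weak limit that is an affine operator. The only real obstacle is the setup: one must notice that density of $\bigcup_I H_I$, together with the fact that permutations fixing $I$ act trivially on $H_I$, makes the inner products literally constant in $a$. Beyond that, no estimate is needed. As a consequence, the limit is again $1$-Lipschitz, since weak limits of isometries are contractive by weak lower semicontinuity of the norm.
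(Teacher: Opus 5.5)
Your proposal is correct and matches the paper's proof step for step: the same displacement bound from permutation symmetry and \eqref{eq:limit-normalization}, the same use of the transposition $(a,b)$ (which fixes $H_I$ pointwise and conjugates $t_{x,a}$ to $t_{x,b}$) to make the matrix coefficients eventually constant on the dense subspace $\bigcup_I H_I$, and then \cref{rem: convergence on dense subspace}. Enlarging $I$ to contain $x$ is a harmless cosmetic variant of the paper's condition $a,b\notin I\cup\{x\}$.
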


\begin{proof}
Fix $x\in\N$ and $t\in\cT$. By permutation symmetry and
\eqref{eq:limit-normalization}, for every $y\in H$ and $a\ne x$,
\begin{equation}\label{eq: displacement}
  \|t_{x,a}y\|=d(o,t_{x,a}y)
  \leq d(o,t_{x,a}o)+d(o,y)
  \leq\sqrt M+d(o,y).
\end{equation}
If $y,z\in H_I$ and $a,b\notin I\cup\{x\}$, the transposition
$(a,b)$ fixes $o$ and hence acts orthogonally. It fixes $y,z$
and conjugates $t_{x,a}$ to $t_{x,b}$. Hence
$\langle t_{x,a}y,z\rangle=\langle t_{x,b}y,z\rangle$.
Thus these coefficients are eventually constant for $y,z$ in the dense
linear subspace $\bigcup_I H_I$, and
\cref{rem: convergence on dense subspace} gives the conclusion.
\end{proof}

For the rest of this section, we fix $x=1$.
\begin{definition}\label{def:weak-limits}
For $t\in\cT$, let $E_t$ be the weak limit of $t_{1,a}$ as $a\to\infty$. The linear part of $E_t$ is denoted by $P_t$.
\end{definition}

\begin{remark}\label{rem:affine-projections}
A contractive linear idempotent map is a projection. Hence a contractive affine map
$Ay=Py+v$ with linear part~$P$ is an affine orthogonal projection, that is, the nearest point projection onto the image of~$A$, if $P$ is an idempotent and $Pv=0$.
\end{remark}

The idempotent property of $E_t$ is an easy and direct consequence of the absorption property when we let the parameters go to infinity. The fact that the idempotent $E_t$ is an affine orthogonal projection takes a bit more work. 

\begin{proposition}\label{prop:projection}
If $\cT$ satisfies the absorption property, then each $E_t$ is an
affine orthogonal projection.
\end{proposition}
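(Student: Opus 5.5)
The plan is to verify the three hypotheses of \cref{rem:affine-projections} for $E_t$: it is contractive, it is idempotent, and its translation part is killed by $P_t$. Contractivity is immediate. For $y,y'\in H$ the differences $t_{1,a}y-t_{1,a}y'$ converge weakly to $E_ty-E_ty'$. The norm is weakly lower semicontinuous, so
\[
\|E_ty-E_ty'\|\le\liminf_a\|t_{1,a}y-t_{1,a}y'\|=\|y-y'\|.
\]
It therefore remains to prove $E_t\circ E_t=E_t$. Granting this, the linear part gives $P_t^2=P_t$. Writing $E_ty=P_ty+v$ with $v=E_t(o)$, evaluating $E_tE_to=E_to$ gives $P_tv+v=v$, hence $P_tv=0$. \Cref{rem:affine-projections} then concludes the proof.

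For idempotence, I would test against the dense subspace $\bigcup_IH_I$. Fix a finite set $I\ni 1$ and $y,z\in H_I$. First fix $a\notin I$. Each $t_{1,a}$ is a continuous affine map, so $t_{1,a}E_ty$ is the weak limit in $b$ of $t_{1,a}t_{1,b}y$. Letting then $a\to\infty$, the definition of $E_t$ applied to the point $E_ty$ gives
\[
\langle E_tE_ty,z\rangle=\lim_a\lim_b\langle t_{1,a}t_{1,b}y,z\rangle.
\]
Now I would invoke the absorption property, transported by a permutation: $t_{1,a}t_{1,b}=\gamma^{-1}t_{1,b}\gamma$ with $\gamma=\gamma_{a,b}\in G(\{a,b\})$. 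Since $a,b\notin I$, \cref{rem: consequence about projections} shows that $\ell(\gamma^{\pm1})$ fixes $H_I$ pointwise and that $\gamma^{\pm1}o\perp H_I$. It follows that $\langle\gamma^{-1}w,z\rangle=\langle w,z\rangle$ for every $w$, and that $\gamma y=y+\gamma o$. Hence
\[
\langle t_{1,a}t_{1,b}y,z\rangle=\langle t_{1,b}y,z\rangle+\langle \ell(t_{1,b})\gamma_{a,b}o,z\rangle .
\]
The first term tends to $\langle E_ty,z\rangle$. So everything reduces to showing that the error term $\langle \gamma_{a,b}o,\ell(t_{1,b})^{*}z\rangle$ vanishes in the iterated limit, first $b\to\infty$, then $a\to\infty$.

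This error term is the main obstacle. The difficulty is that both vectors move with $b$, so weak convergence alone does not suffice. My first attempt would use symmetry and the locality lemma. Applying a permutation that fixes $I\cup\{a\}$ and moves $b$, the expression depends only on the orbit type of $(a,b)$ relative to $I$. The idea is to replace the single term by an average over many fresh indices $b_1,\dots,b_N$, or alternatively to move $a$ while keeping $b$ fixed.

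For fixed $b$, the vectors $\gamma_{a,b}o$ have norm bounded by a constant times $\sqrt M$, since $\gamma$ is a fixed word up to relabelling. By \cref{lem:locality}(i), the projections of these vectors orthogonal to $H_{\{b\}\cup I}$ are pairwise orthogonal for distinct $a$, so they are weakly null as $a\to\infty$. This would kill the contribution of $\ell(t_{1,b})^*z$ lying outside $H_{\{1,b\}\cup I}$. The remaining component needs separate control, which I would obtain by using the sign change $\varepsilon_a$ or a transposition $(a,c)$ to produce cancellation. If this direct approach fails, the fallback is to interchange the roles of $a$ and $b$ using the transposition $(a,b)$, which fixes $H_I$, so that the moving index is the one that is eventually frozen.
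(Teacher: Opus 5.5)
Your contractivity argument and your reduction to $E_t\circ E_t=E_t$ via \cref{rem:affine-projections} are correct. So is the identity $\langle t_{1,a}t_{1,b}y,z\rangle=\langle t_{1,b}y,z\rangle+\langle\gamma_{a,b}o,U_b^*z\rangle$ with $U_b=\ell(t_{1,b})$. But the proposal stops exactly at the step that carries the content, and the tools you list cannot close it, because the error term does not decay as indices escape. With $y=o$ it equals $\langle t_{1,a}t_{1,b}o-t_{1,b}o,z\rangle$. A permutation fixing $I\cup\{1\}$ pointwise fixes $z$ and carries $(t_{1,a},t_{1,b})$ to $(t_{1,a'},t_{1,b'})$ for any other pair of distinct fresh indices, so the term is a constant $c(z)$. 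Your iterated-limit formula then identifies it: $c(z)=\langle E_tE_t(o)-E_t(o),z\rangle=\langle P_tE_t(o),z\rangle$. Hence showing that it vanishes for all $z\in\bigcup_IH_I$ is literally equivalent to $P_tE_t(o)=0$, the whole content of the statement. Averaging over fresh indices, freezing $b$ while moving $a$, or swapping $a$ and $b$ all return the same constant. Moreover, the claimed pairwise orthogonality of the components of $\gamma_{a,b}o$ and $\gamma_{a',b}o$ orthogonal to $H_{I\cup\{b\}}$ does not follow from \cref{lem:locality}(i). That lemma only concerns disjoint index sets, whereas these two elements share the index $b$. The remaining component is also left without an argument.

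What is missing is the right choice of test vector. First, since the error term does not depend on $y$, subtracting your identity for $y$ and for $o$ gives $\langle P_t^2y,z\rangle=\langle P_ty,z\rangle$ on a dense subspace. So $P_t$ is an orthogonal projection; the paper gets this directly from linear parts. Then test against $q=P_tE_t(o)$ rather than against $z\in H_I$.

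Since $q\in\operatorname{Im}P_t$, $U_b^*\to P_t^*=P_t$ weakly, and $\|U_b^*q\|=\|q\|$, you get $U_b^*q\to q$ in norm. This removes the obstacle you identified, that both vectors move with $b$. Choose the $\gamma_{a,b}$ as permutation images of one fixed conjugator, so that $\|\gamma_{a,b}^{\pm1}o\|$ is constant. Then $\ell(\gamma_{a,b})q\to q$ in norm and $\langle\gamma_{a,b}^{\pm1}o,q\rangle\to0$, because for $a,b\notin I$ the map $\ell(\gamma_{a,b})$ fixes $H_I$ pointwise, $\gamma_{a,b}^{\pm1}o\perp H_I$, and $\|q-\operatorname{pr}_Iq\|\to0$ as $I$ grows.

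Inserting these facts into the absorption identity at $o$ gives $\lim_a\lim_b\langle t_{1,a}t_{1,b}o,q\rangle=\langle E_t(o),q\rangle=\|q\|^2$. Your weak-continuity formula gives the same limit as $\langle E_tE_t(o),q\rangle=\langle q+E_t(o),q\rangle=2\|q\|^2$. Hence $q=0$. This is exactly the paper's sliding-component identity $\langle E_t(o),q\rangle=2\|q\|^2$; the only difference is that the paper computes the additivity of the sliding components by hand, whereas your iterated weak limit supplies it directly.
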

\begin{proof}
Fix $t\in\cT$, and write $U_a=\ell(t_{1,a})$.
For distinct $a,b\ne 1$, the absorption property, upon conjugation with permutations, yields 
$\gamma_{a,b}\in G(\{a,b\})$ with
\begin{equation}\label{eq:template-absorption}
  \gamma_{a,b}t_{1,a}t_{1,b}\gamma_{a,b}^{-1}=t_{1,b}.
\end{equation}
Since the linear part of the conjugating element for distant $a,b$ acts trivially on a fixed test vector (cf.  \cref{rem: consequence about projections}), the linear parts of $t_{1,a}t_{1,b}$ and $t_{1,b}$ are locally indistinguishable. This is the idea which we now make precise. 
By equivariance, we may choose all $\gamma_{a,b}$ as images of one fixed
conjugator under permutations fixing $1$. In particular,
$c=\|\gamma_{a,b}o\|$ is independent of $a,b$. For finite $I$ and distinct
$a,b\notin I\cup\{1\}$, the linear part $\ell(\gamma_{a,b})$
fixes $H_I$ pointwise according to \cref{rem: consequence about projections}. 
Taking linear parts
in \eqref{eq:template-absorption} therefore gives
\[
  \langle v,U_aU_bw\rangle=\langle v,U_bw\rangle
\]
for all $v,w\in H_I$.
Take first $b\to\infty$ and then $a\to\infty$. Density gives
$P_t^2=P_t$. Since the maps $t_{1,a}$ are isometries, the limit map 
$E_t$ is contractive.  
By
\cref{rem:affine-projections}, $P_t$ is an orthogonal projection, and it remains to show that
$q:=P_tE_t(o)$ vanishes. 

Let us first describe the geometric idea behind the vanishing of~$q$, which is the component of the translation part of $E_t$ that is parallel to the subspace $\operatorname{Im} P_t$. Let us refer to this as the \emph{sliding component}. 
Now we approximate $E_t$ by two distant copies $t_{1,a}$ and $t_{1,b}$. The sliding components of their product approximately add up, so are approximately twice the one of $E_t$. The conjugating element in the absorption identity does not asymptotically affect the sliding component (another instance of \cref{rem: consequence about projections}). In the limit we obtain $2q=q$, thus $q=0$. 

To show that the sliding component vanishes, it suffices to test in the
direction of~$q$ itself. More precisely, we will prove that
\begin{equation}\label{eq: sliding component}
  \langle E_t(o),q\rangle=2\|q\|^2,
\end{equation}
whereas $q=P_tE_t(o)$ and the orthogonality of~$P_t$ give
$\langle E_t(o),q\rangle=\|q\|^2$. This will conclude the proof that $q=0$. 
Next we present the detailed argument for~\eqref{eq: sliding component}. 

Since $U_a$ converges weakly to $P_t$ and $P_tq=q$, we obtain that 
\[\Vert U_a^*q-q\Vert\to 0~~\text{as $a\to\infty$.}\] 
Since
$\ell(\gamma_{a,b})$ fixes $H_I$ whenever $a,b\notin I$ by \cref{rem: consequence about projections},
density also gives 
\[\ell(\gamma_{a,b})^*q\xrightarrow{a,b\to\infty} q.\]
Since the action is affine and $o$ is the origin,
we have $gy=\ell(g)y+go$ for $g\in G$ and $y\in H$.
We first show that the sliding components add asymptotically.
Since $U_a^*q\to q$ and the vectors $t_{1,b}o$ are uniformly bounded,
\[
  \langle t_{1,a}t_{1,b}o,q\rangle
  =\langle t_{1,a}o,q\rangle+\langle t_{1,b}o,U_a^*q\rangle\\
  \xrightarrow{a,b\to\infty}2\langle E_t(o),q\rangle
  =2\|q\|^2.
\]
Here we use that $\langle t_{1,b}o,(U_a^*-I)q\rangle\to0$ uniformly in $b$
by \eqref{eq: displacement}.

On the other hand, the absorption property identifies this product with a single copy
up to a conjugation whose effect in the direction~$q$ vanishes: 
Evaluating \eqref{eq:template-absorption} at $o$ gives
\[
  \ell(\gamma_{a,b})(t_{1,a}t_{1,b}o)+\gamma_{a,b}o
  =U_b(\gamma_{a,b}o)+t_{1,b}o.
\]
Hence
\[
  \langle t_{1,a}t_{1,b}o-t_{1,b}o,q\rangle
  =\langle t_{1,a}t_{1,b}o,(I-\ell(\gamma_{a,b})^*)q\rangle
       +\langle\gamma_{a,b}o,(U_b^*-I)q\rangle.
\]
By Cauchy--Schwarz,
\begin{equation*}
  |\langle t_{1,a}t_{1,b}o-t_{1,b}o,q\rangle|
  \le 2\sqrt M\,\|(I-\ell(\gamma_{a,b})^*)q\|
  +c\,\|(U_b^*-I)q\|\xrightarrow{a,b\to\infty} 0,
\end{equation*}
where $\|t_{1,a}t_{1,b}o\|\leq2\sqrt M$ by \eqref{eq: displacement}.
Finally, \eqref{eq: sliding component} follows from 
\[
  \langle E_t(o),q\rangle
  =\lim_{a,b\to\infty}\langle t_{1,a}t_{1,b}o,q\rangle
  =2\|q\|^2.\qedhere
\]
\end{proof}

A major step in the proof is \cref{prop:commuting-limits} saying that $E_t$ and $E_u$ commute for $t,u\in\cT$. We first deal with the special case $u=t^{-1}$ in which case the two projections coincide. 

\begin{lemma}\label{lem:inverse-limits}
For every $t\in\cT$, we have $E_{t^{-1}}=E_t$.
\end{lemma}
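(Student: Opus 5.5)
The plan is to compare the two families of isometries $(t^{-1})_{1,a}=t_{1,a}^{-1}$ and $t_{1,a}$ on the dense subspace $\bigcup_I H_I$. By \cref{rem: convergence on dense subspace} and \cref{lem:weak-limits}, two weak limits of affine operators coincide once their matrix coefficients $\langle\,\cdot\,y,z\rangle$ agree for $y,z$ in a dense subspace. So it suffices to show
\[
\lim_{a\to\infty}\langle t_{1,a}^{-1}y,z\rangle=\lim_{a\to\infty}\langle t_{1,a}y,z\rangle\qquad\text{for all } y,z\in H_I,
\]
for every finite $I\ni 1$.

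The main tool is condition~(ii) of \cref{def:coordinates}: either $\varepsilon_2(t)=t^{-1}$ or $\varepsilon_1(t)=t^{-1}$.

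\emph{Case $\varepsilon_2(t)=t^{-1}$.} Conjugating by a permutation fixing $1$ gives $\varepsilon_a(t_{1,a})=t_{1,a}^{-1}$. The sign change $\varepsilon_a$ fixes $o$, so it acts orthogonally on $H$. For $a\notin I$ it fixes $H_I$ pointwise, by the same reasoning as in \cref{rem: consequence about projections} (it is an element of $W$ supported outside $I$, so it fixes $G(I)o$). Hence for $y,z\in H_I$,
\[
\langle t_{1,a}^{-1}y,z\rangle=\langle \varepsilon_a t_{1,a}\varepsilon_a y,z\rangle=\langle t_{1,a}y,\varepsilon_a z\rangle=\langle t_{1,a}y,z\rangle .
\]
This is already the required identity, and gives $E_{t^{-1}}=E_t$.

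\emph{Case $\varepsilon_1(t)=t^{-1}$.} Here the sign change sits at the fixed coordinate $1$, so the trick above is unavailable, and I expect this case to be the main obstacle. I would handle it in two steps.

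First, the linear parts. We have $\ell(t_{1,a}^{-1})=U_a^{-1}=U_a^*$, and $U_a\to P_t$ weakly. Taking adjoints preserves weak operator convergence, and $P_t$ is self-adjoint by \cref{prop:projection}. So the linear part of $E_{t^{-1}}$ is $P_t$.

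Second, the translation parts, where one must show that $t_{1,a}^{-1}o=-U_a^*(t_{1,a}o)$ converges weakly to $E_t(o)$. The difficulty is that both factors move with $a$. To separate them I would use the absorption identity \eqref{eq:template-absorption} with two distant indices. It rewrites
\[
t_{1,a}=\gamma_{a,b}^{-1}\,t_{1,b}\,\gamma_{a,b}\,t_{1,b}^{-1},
\]
and \cref{lem:absorption-commutation} gives the commutation $[t_{1,a},t_{1,b}]=1$ in this case. Evaluating at $o$ and pairing with a test vector $q\in H_I$, I would follow the same bookkeeping as in the proof of \cref{prop:projection}. The conjugators $\gamma_{a,b}$ act asymptotically trivially on $q$, since $\ell(\gamma_{a,b})^*q\to q$. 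The index $b$ can be sent to infinity first, so that $U_b^*$ is replaced by $P_t$. This should express $\lim\langle t_{1,a}^{-1}o,q\rangle$ through $E_t(o)$, $P_t$ and $\lim\langle t_{1,a}o,q\rangle$.

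Finally, I would use that $E_t(o)\perp\operatorname{Im}P_t$, by \cref{prop:projection} and \cref{rem:affine-projections}. This should identify the translation part of $E_{t^{-1}}$ with $E_t(o)$. If this bookkeeping does not close, my fallback is to show directly that $E_{t^{-1}}$ is an affine orthogonal projection with linear part $P_t$ (by \cref{prop:projection} applied to $t^{-1}$) whose image meets the image of $E_t$. I would then compare the two projections via $E_tE_{t^{-1}}$, obtained as a limit of $t_{1,a}t_{1,b}^{-1}$ with $a,b$ distant.
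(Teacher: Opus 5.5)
Your case $\varepsilon_2(t)=t^{-1}$ is correct and is the paper's argument; testing on all $y\in H_I$ rather than only at $o$ even gives the linear parts at the same time. In the other case, your computation $P_{t^{-1}}=P_t^*=P_t$ of the linear part is also what the paper does. The gap is the translation part in the case $\varepsilon_1(t)=t^{-1}$, which you never actually determine.

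Your main route through the absorption conjugators is a detour. Rewriting $t_{1,a}^{-1}=t_{1,b}\gamma_{a,b}^{-1}t_{1,b}^{-1}\gamma_{a,b}$ still leaves $t_{1,b}$ and $t_{1,b}^{-1}$ moving with the same index. That is exactly the coupling you identified in $-U_a^*(t_{1,a}o)$, so the bookkeeping from \cref{prop:projection} gives you no handle on it. In the fallback you assert that the images of $E_t$ and $E_{t^{-1}}$ meet, but you give no reason. Knowing that $E_tE_{t^{-1}}$ is an iterated limit of $t_{1,a}t_{1,b}^{-1}$ tells you nothing by itself.

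The missing step is to pass the commutation you already quoted to the limit, keeping the two indices separate. From $[t_{1,a},t_{1,b}]=1$ you get $t_{1,a}t_{1,b}^{-1}=t_{1,b}^{-1}t_{1,a}$ for distinct $a,b\neq 1$.
\begin{itemize}
\item Let $b\to\infty$ first. Weak continuity of the bounded affine map $t_{1,a}$ gives $t_{1,a}E_{t^{-1}}=E_{t^{-1}}t_{1,a}$.
\item Then let $a\to\infty$. Weak continuity of $E_{t^{-1}}$ gives $E_tE_{t^{-1}}=E_{t^{-1}}E_t$.
\end{itemize}
Now write $E_ty=P_ty+v$ and $E_{t^{-1}}y=P_ty+v'$. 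Here $P_tv=P_tv'=0$ by \cref{prop:projection} applied to $t$ and to $t^{-1}\in\cT$. Then
\[ E_tE_{t^{-1}}y=P_ty+P_tv'+v=P_ty+v,\qquad E_{t^{-1}}E_ty=P_ty+P_tv+v'=P_ty+v', \]
so commutation forces $v=v'$. This is precisely the paper's argument: commuting orthogonal projections onto parallel affine subspaces coincide. The conjugators $\gamma_{a,b}$ play no role in this lemma.
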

\begin{proof}
The linear parts agree, since $P_{t^{-1}}=P_t^*=P_t$.
Suppose first that the sign change map $\varepsilon_2$ (see \cref{def:coordinates}) satisfies $\varepsilon_2(t)=t^{-1}$.
For $g\in G(I)$ and $a\notin I\cup\{1\}$, choose a permutation
$\sigma$ with $\sigma(1)=1$ and $\sigma(2)=a$.
Then $t_{1,a}=\sigma(t)$ and $\varepsilon_a=\sigma\varepsilon_2\sigma^{-1}$, so
$  \varepsilon_a(t_{1,a})
  =(\sigma\varepsilon_2\sigma^{-1})(\sigma(t))
  =\sigma(\varepsilon_2(t))
  =\sigma(t^{-1})=t_{1,a}^{-1}.$
Since $\varepsilon_a$ acts orthogonally and fixes both $o$ and $go$, we obtain
\[
  \bigl\langle t_{1,a}^{-1}o,go\bigr\rangle
  =\bigl\langle \varepsilon_a(t_{1,a}o),\varepsilon_a(go)\bigr\rangle
  =\bigl\langle t_{1,a}o,go\bigr\rangle.
\]
Letting $a\to\infty$ shows
\[
  \bigl\langle E_{t^{-1}}(o)-E_t(o),go\bigr\rangle=0.
\]
Since the $G$-orbit of $o$ has dense linear span
in $H$, we get $E_{t^{-1}}(o)=E_t(o)$. Since their linear parts also agree,
the affine projections agree.

Otherwise $\varepsilon_1(t)=t^{-1}$. \cref{lem:absorption-commutation} and permutation symmetry give
$[t_{1,a},t_{1,b}]=1$.
Hence $[t_{1,a},t_{1,b}^{-1}]=1$. Taking first $b\to\infty$ and
then $a\to\infty$ shows that $E_t$ and $E_{t^{-1}}$ commute. 
Moreover, they are orthogonal projections onto parallel affine subspaces since their linear parts coincide. If they commute, they must be the same orthogonal projection. 
\end{proof}

The following lemma is standard. We leave it to the reader. 
For the linear parts, one uses that weak operator convergence
of orthogonal projections to an orthogonal projection implies strong
operator convergence~\cite[\S115, p.~62]{Halmos82}, and that multiplication
is jointly continuous on norm-bounded sets in the strong operator topology.
In the affine setting, one additionally needs to control the translation
parts. 

\begin{lemma}\label{lem:affine-projection-limits}
Let $H$ be a real Hilbert space with origin $o$.
\begin{enumerate}[label=(\roman*)]
\item Let $E,F_n$ be affine orthogonal projections satisfying
$EF_n=F_nE$. If $\ell(F_n)\to\ell(E)$ in the weak
operator topology and $(F_n(o))$ is bounded, 
then $F_n\to E$ converges weakly.
\item Let $F_n,G_n$ be affine orthogonal projections converging
weakly to affine orthogonal projections $F,G$, respectively.
If $F_nG_n=G_nF_n$ for every $n$, then $FG=GF$.
\end{enumerate}
\end{lemma}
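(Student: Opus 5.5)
Write affine orthogonal projections as $E(y)=P(y-e)+e$, with $P=\ell(E)$ an orthogonal projection and $e$ any point of the image; if the image is $\operatorname{Im}P+v$ with $v\perp\operatorname{Im}P$, then $E(o)=v$ and $E(y)=Py+v$. For part (i), first treat the linear parts. The operators $P_n=\ell(F_n)$ are orthogonal projections converging weakly to the orthogonal projection $P=\ell(E)$, so by the cited fact from \cite{Halmos82} they converge strongly. It remains to show that $F_n(o)=v_n$ converges weakly to $v=E(o)$; then $F_ny=P_ny+v_n\to Py+v=E(y)$ weakly for all $y$.

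The plan is to use commutation to pin down $v_n$. Evaluating $EF_n=F_nE$ at $o$ gives
\[
  Pv_n+v=P_nv+v_n .
\]
Take a weak cluster point $w$ of the bounded sequence $(v_n)$. Since $P_nv\to Pv=0$ strongly (because $v\perp\operatorname{Im}P$), the limit along the subsequence gives $Pw+v=w$. Hence $(I-P)w=v$, so $w=v+Pw$.

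To kill $Pw$, use $v_n\perp\operatorname{Im}P_n$. This gives $\langle v_n,P_nz\rangle=0$ for all $z$. Since $P_nz\to Pz$ strongly and $v_n\to w$ weakly along the subsequence, we get $\langle w,Pz\rangle=0$, so $Pw=0$. Thus $w=v$, and as every cluster point equals $v$, we get $v_n\to v$ weakly. This step, combining weak convergence of the translations with strong convergence of the linear parts, is the only real subtlety in (i).

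For part (ii), write $F_n=P_n\cdot+v_n$ and $G_n=Q_n\cdot+u_n$. By the Halmos fact, $P_n\to P$ and $Q_n\to Q$ strongly. The translations converge weakly and are bounded by the uniform boundedness principle. Commutation of $F_n$ and $G_n$ splits into a linear part and a translation part:
\[
  P_nQ_n=Q_nP_n,\qquad P_nu_n+v_n=Q_nv_n+u_n .
\]
Products of uniformly bounded, strongly convergent operators converge strongly, so $PQ=QP$.

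For the translation identity, the main obstacle is that $P_nu_n$ involves a strongly convergent operator applied to a weakly convergent vector. The plan is to pair with a fixed $z$: $\langle P_nu_n,z\rangle=\langle u_n,P_nz\rangle\to\langle u,Pz\rangle=\langle Pu,z\rangle$, using self-adjointness, strong convergence of $P_nz$ and boundedness of $u_n$. The same holds for $Q_nv_n$. In the limit, $Pu+v=Qv+u$, which is exactly $FG(o)=GF(o)$. Together with $PQ=QP$, this gives $FG=GF$.
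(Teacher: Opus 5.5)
Your proof is correct and follows the route the paper sketches: you get strong convergence of the linear parts from the Halmos fact, and $PQ=QP$ from joint strong continuity of multiplication on norm-bounded sets. The paper only says that the translation parts must also be controlled and leaves this to the reader. Your argument supplies exactly that step: in (i), the identity $Pv_n+v=P_nv+v_n$ obtained from commutation at $o$, combined with $P_nv_n=0$; in (ii), pairing $P_nu_n$ and $Q_nv_n$ against fixed vectors.
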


\begin{proposition}\label{prop:commuting-limits}
If the local commutator condition or the local conjugacy condition
holds, then the projections $E_t$, $t\in\cT$, commute pairwise.
\end{proposition}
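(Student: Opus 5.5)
We need $E_tE_u=E_uE_t$ for all $t,u\in\cT$. If $u=t^{\pm1}$, this is immediate from \cref{lem:inverse-limits}, so we assume $u\neq t^{\pm1}$. The plan is to realize $E_t$ and $E_u$ as weak limits along two \emph{different} second indices $a\neq b$, so that the approximants $t_{1,a}$ and $u_{1,b}$ either commute up to a local error or commute exactly. Then we pass to the limit with \cref{lem:affine-projection-limits}.

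\textbf{Local commutator condition.} Choose $\epsilon,\delta$ with $c_{a,b}:=[t_{1,a}^{\epsilon},u_{1,b}^{\delta}]\in G(\{a,b,c\})$, where $c$ is an auxiliary index obtained by relabelling the coordinate $4$; it can be sent to infinity together with $a$ and $b$. By \cref{lem:inverse-limits}, $t_{1,a}^{\epsilon}\to E_t$ and $u_{1,b}^{\delta}\to E_u$, so we may drop the exponents. Write $t_{1,a}u_{1,b}=c_{a,b}\,u_{1,b}t_{1,a}$. Fix $v,w$ in some $H_I$ with $a,b,c\notin I\cup\{1\}$. By \cref{rem: consequence about projections}, $\operatorname{pr}_I\circ c_{a,b}=\operatorname{pr}_I$, since $c_{a,b}\in G(J)$ with $J\cap I=\emptyset$; and $1\notin\{a,b,c\}$. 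This gives
\[
\langle t_{1,a}u_{1,b}v,w\rangle=\langle u_{1,b}t_{1,a}v,w\rangle .
\]
We now take iterated limits: first $b\to\infty$ with $a$ fixed, then $a\to\infty$. This is the main obstacle, because weak convergence does not commute with products. For fixed $a$, the left side tends to $\langle t_{1,a}E_uv,w\rangle$, since $t_{1,a}$ is a fixed continuous affine map and $E_u v$ is a weak limit; composing with a fixed bounded affine map preserves weak convergence. The right side is $\langle u_{1,b}(t_{1,a}v),w\rangle$, which tends to $\langle E_u t_{1,a}v,w\rangle$ because the point $t_{1,a}v$ is fixed. We obtain $t_{1,a}E_u=E_ut_{1,a}$ on a dense set, hence everywhere by continuity. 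So each $t_{1,a}$ commutes with the affine projection $E_u$.

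We then let $a\to\infty$. The right-hand form $\langle E_ut_{1,a}v,w\rangle=\langle t_{1,a}v,E_u^*w\rangle$ needs care with the affine part; using $E_u y=P_uy+E_u(o)$, it converges to $\langle E_uE_tv,w\rangle$. The left-hand form $\langle t_{1,a}(E_uv),w\rangle$ converges to $\langle E_tE_uv,w\rangle$. Hence $E_tE_u=E_uE_t$. Alternatively, once $t_{1,a}$ commutes with $E_u$, \cref{lem:affine-projection-limits}(i) can be applied with $F_n$ replaced by suitable projections.

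\textbf{Local conjugacy condition.} Here the templates are involutions, and we take $s_{1,a},w_{1,b}$ conjugate to $t_{1,a},u_{1,a}$ by elements of $G(\{a,c\})$. First, $s_{1,a}$ converges weakly to $E_t$: the conjugator acts trivially on $H_I$ in the limit, so the same test-vector argument as in \cref{prop:projection} gives equal weak limits. The same holds for $w$ and $E_u$. The absorption identity (i) for $s$ and $w$ lets us rerun \cref{prop:projection} verbatim, so these limits are affine projections. Condition (ii), moved by permutations, says that $s_{1,a}$ and $w_{1,b}$ commute exactly. The iterated-limit argument above then applies with zero error term and yields $E_tE_u=E_uE_t$.

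The only delicate point is justifying the order of the two limits. For this we rely on the uniform displacement bound \eqref{eq: displacement} and on density of $\bigcup_IH_I$.
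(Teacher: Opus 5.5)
Your local commutator argument is essentially the paper's. The local conjugacy case, however, has a genuine gap at two points. First, your claim that $s_{1,a}\to E_t$ weakly ``by the same test-vector argument'' only controls linear parts. Write $s_{1,a}=gt_{1,a}g^{-1}$ with $g\in G(\{a,c\})$. Since $\ell(g)$ fixes $H_I$ pointwise, $\ell(s_{1,a})$ and $\ell(t_{1,a})$ have the same matrix coefficients on $H_I$. For the translation parts you only get
\[
  \langle s_{1,a}o,w\rangle=\langle t_{1,a}o,w\rangle+\langle g^{-1}o,\ell(t_{1,a})^{*}w\rangle,\qquad w\in H_I.
\]
Here $g^{-1}o\perp H_I$, while $\ell(t_{1,a})^{*}w$ converges only weakly and its component orthogonal to $H_I$ need not vanish. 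Nothing forces the last term to zero. So a priori the limit has linear part $P_t$ but projects onto a different parallel affine subspace; for a fixed auxiliary index $c$ there is no reason at all for it to equal $E_t$.

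The missing idea is how the paper uses the involution hypothesis, which you state but never use. Absorption and $t^2=1$ give $(t_{1,a}t_{1,b})^2=1$, hence $[t_{1,a},t_{1,b}]=1$. Since $s_{1,b}$ is conjugate to $t_{1,b}$ by an element of $G(\{b,c\})$, it then commutes with $t_{1,a}$. Passing to limits yields $E_tF_c=F_cE_t$. Together with weak convergence of the linear parts and boundedness of $F_c(o)$, \cref{lem:affine-projection-limits}(i) then gives $F_c\to E_t$ as $c\to\infty$.

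Second, condition~(ii) of \cref{def:local-conjugacy} gives exact commutation only for $s$ and $w$ built with the \emph{same} auxiliary coordinate $c$. For different auxiliary coordinates the two elements need not commute. So in your double limit ($b\to\infty$, then $a\to\infty$) the index $c$ must stay fixed. What you obtain is only $F_cH_c=H_cF_c$ for the fixed-$c$ limits of $s_{1,a}$ and $w_{1,b}$, not $E_tE_u=E_uE_t$. A third limit $c\to\infty$ is unavoidable, and commutativity does not survive weak limits in general, because composition is not weakly continuous. The paper handles this with \cref{lem:affine-projection-limits}(ii), which rests on the fact that weak operator convergence of orthogonal projections to a projection is strong convergence. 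Saying ``zero error term'' addresses neither issue.

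A minor point in the commutator case: after $b\to\infty$ you only know $\langle t_{1,a}E_uv,w\rangle=\langle E_ut_{1,a}v,w\rangle$ for $v,w\in H_I$ with $a\notin I$. The union $\bigcup_{a\notin I}H_I$ need not be dense, since the commutator involves the fixed index $a$. So ``$t_{1,a}$ commutes with $E_u$'' is unjustified. This is harmless, because you then let $a\to\infty$ with $v,w\in H_I$ fixed. The ``alternatively'' remark that relies on it should be dropped.
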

\begin{proof}[Proof under the local commutator condition]
Let $t,u\in\cT$. If $u=t^{\pm1}$, then $E_u=E_t$ by
\cref{lem:inverse-limits}. Otherwise, by the same lemma, replacing
either template by its inverse does not change its associated projection.
Thus, by \cref{def:local-commutators}, we may assume that
$[t_{1,2},u_{1,3}]\in G(\{2,3,4\})$.
Now fix a finite set $I$ and choose distinct
$a,b,c\notin I\cup\{1\}$. Apply a permutation fixing $1$ and sending
$2,3,4$ to $a,b,c$ to this inclusion.
The commutator $d=[t_{1,a},u_{1,b}]$ belongs to $G(\{a,b,c\})$. When $I$ increases, the failure for $t_{1,a}$ and $u_{1,b}$ to commute is supported on indices escaping to infinity and thus invisible in the weak limit. More precisely, we have 
\[
  \operatorname{pr}_I(t_{1,a}u_{1,b}y)
  =\operatorname{pr}_I(u_{1,b}t_{1,a}y)
\]
for every $y\in H$ according to~\eqref{eq:local-projection}.
Take first $b\to\infty$, then $a\to\infty$. Since $I$ was
arbitrary and the union of the images of the $\operatorname{pr}_I$ is dense in~$H$, this shows 
$E_tE_u=E_uE_t$.
\end{proof}

\begin{proof}[Proof under the local conjugacy condition]
Fix $t,u\in\cT$. The idea is to replace the template generators $t,u$ by suitable local
conjugates whose limits commute. In the limit process we fix another auxiliary coordinate~$c$ other than the base coordinate~$1$. We call the limit operators $F_c, H_c$ as they depend on the auxiliary coordinate. 
First, we show that $F_c,H_c$ are affine orthogonal projections and,
by taking successive limits as in the preceding proof, that they commute.
Second, we recover $E_t,E_u$ as limits of $F_c, H_c$ as 
the auxiliary coordinate $c$ tends to infinity. 
Third, we use that commutativity is preserved under weak limits of
affine orthogonal projections.

\subsubsection*{Step 1: Construct commuting projections with $c$ fixed}

Choose $s_{1,2},w_{1,2}$ as in \cref{def:local-conjugacy}.
Both belong to $G(\{1,2,4\})$.
Fix an auxiliary index $c\ne 1$. For $a\notin\{1,c\}$, denote
their images under a permutation fixing $1$ and sending $2,4$ to $a,c$ by
$s_a,w_a$. These are well-defined and belong to
$G(\{1,a,c\})$. With $c$ fixed, the argument of
\cref{lem:weak-limits} shows that the weak limits of the affine maps $s_a$ and $w_a$ as $a\to\infty$ exist. We will denote them by $F_c$ and $H_c$. 

We next verify that $F_c$ and $H_c$ are affine orthogonal projections.
By condition~(i), there is an element $\gamma\in G(\{2,3\})$ such that
\[
  \gamma s_{1,2}s_{1,3}\gamma^{-1}=s_{1,3}.
\]
For distinct $a,b\notin\{1,c\}$, applying a permutation fixing $1$ and sending
$2,3,4$ to $a,b,c$, respectively, gives
\[
  \gamma_{a,b}s_as_b\gamma_{a,b}^{-1}=s_b,
  \qquad \gamma_{a,b}\in G(\{a,b\}).
\]
Thus the family $(s_a)_a$, with $c$ fixed, satisfies the same absorption
relation~\eqref{eq:template-absorption} used in the proof of \cref{prop:projection}. In particular,
the conjugators involve only the running indices $a,b$, not the auxiliary
index $c$. 
The absorption relation and the locality of the conjugators are therefore
precisely those used in the proof of \cref{prop:projection}.
The remaining estimates require only uniform bounds on
$\|\gamma_{a,b}o\|$ and $\|s_a o\|$.
These hold because both families consist of permutation images of fixed
elements and permutations fix $o$.
Thus the same proof shows that $F_c$ and $H_c$ are affine orthogonal projections.

Finally, condition~(ii) in \cref{def:local-conjugacy}, transported by the same permutations, yields 
\[
  s_aw_b=w_bs_a\qquad\text{where $a\ne b$, $a,b\notin\{1,c\}$.}
\]
With $c$ fixed, taking weak limits first as $b\to\infty$ and then as
$a\to\infty$ gives $s_aH_c=H_cs_a$ and then $F_cH_c=H_cF_c$.

\subsubsection*{Step 2: Recover the original projections as $c\to\infty$}

We show that $F_c$ converges weakly to $E_t$. Likewise for $H_c$ and $E_u$.
Here we use the assumption in \cref{def:local-conjugacy} that $\cT$
consists of involutions. For distinct $a,b\notin\{1,c\}$, absorption shows
that $t_{1,a}t_{1,b}$ is conjugate
to $t_{1,b}$, so $(t_{1,a}t_{1,b})^2=1$.
Since both factors are involutions, this implies $[t_{1,a},t_{1,b}]=1$.
Since $s_b$ is conjugate to $t_{1,b}$
by an element of $G(\{b,c\})$, it also commutes with $t_{1,a}$.
Hence $E_tF_c=F_cE_t$, and similarly $E_uH_c=H_cE_u$.

We now consider the linear part $Q_c$ of $F_c$. For finite $I$ and distinct
$a,c\notin I\cup\{1\}$, the linear part of the conjugator between $t_{1,a}$
and $s_a$ fixes $H_I$ pointwise.
Thus $\ell(s_a)$ and $\ell(t_{1,a})$ have the same matrix
coefficients on this space. Taking first $a\to\infty$ and then
$c\to\infty$, and using density, gives
$Q_c\to P_t$ weakly. Likewise, the linear parts $R_c$ of $H_c$
converge weakly to $P_u$.
The points $F_c(o)$ remain bounded, since every $s_a$, as $a,c$ vary,
is a permutation image of the fixed element $s_{1,2}$.
The same argument bounds $H_c(o)$ using $w_{1,2}$. 
Part~(i) of \cref{lem:affine-projection-limits} implies 
$F_c\to E_t$ and $H_c\to E_u$ weakly.

\subsubsection*{Step 3: Pass commutativity to the limit $c\to\infty$}

By Step~1, $F_cH_c=H_cF_c$ for every $c$.
By Step~2, $F_c\to E_t$ and $H_c\to E_u$ weakly.
Part~(ii) of \cref{lem:affine-projection-limits} implies
$E_tE_u=E_uE_t$.
\end{proof}

Every affine projection $E_t$, $t\in \cT$, yields a reflection at the affine subspace fixed by $E_t$. By \cref{prop:commuting-limits}, the projections commute, and so do these reflections, which act as a finite product of cyclic subgroups of order~$2$ on the Hilbert space~$H$. By a well-known fixed point theorem for isometric actions of finite groups on complete CAT(0)-spaces~\cite[Corollary II.2.8(1) on p.~179]{Bridson-Haefliger}, which goes back to Bruhat and Tits, there is a common fixed point of the $E_t$, $t\in\cT$. Below we show that the origin $o$ is such a common fixed point. After that, we deduce that the elements $t\in\cT$ themselves fix~$o$, which requires the absorption property. 

\begin{proposition}\label{prop:final}
If $\cT$ satisfies the absorption property and the projections
$E_t$, $t\in\cT$, commute pairwise, then every $t\in\cT$ fixes $o$.
\end{proposition}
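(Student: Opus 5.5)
The plan has two stages. First, show that $o$ is a common fixed point of the projections $E_t$, $t\in\cT$. Second, use the absorption property to upgrade this to $to=o$ for every $t\in\cT$.

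\emph{Stage 1: $E_t(o)=o$.} Let $e$ be the barycenter of $(E_t(o))_{t\in\cT}$. Since $E_t$ is the weak limit of $t_{1,a}$, the barycenters $q_{1,a}$ of \cref{lem:locality}(ii) converge weakly to $e$ as $a\to\infty$.
\begin{itemize}
\item Fix $a$ and let $b\to\infty$ in $\langle q_{1,a},q_{1,b}\rangle\le 0$. This gives $\langle q_{1,a},e\rangle\le0$.
\item Then let $a\to\infty$. This gives $\|e\|^2\le 0$, so $e=0=o$.
\end{itemize}
By \cref{prop:projection} each $E_t$ is the nearest point projection onto a closed affine subspace $A_t$. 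Since the $E_t$ commute, the Bruhat--Tits fixed point theorem quoted above gives a point $z\in\bigcap_t A_t$. Put $v_t=o-E_t(o)$. Then $v_t$ is orthogonal to the direction of $A_t$, and both $E_t(o)$ and $z$ lie in $A_t$, so
\[
\langle v_t,o-z\rangle=\langle v_t,o-E_t(o)\rangle=\|v_t\|^2 .
\]
Summing over $t$ and using $\sum_t v_t=M(o-e)=0$ gives $\sum_t\|v_t\|^2=0$. Hence $E_t(o)=o$ for all $t$. In other words, $t_{1,a}o\to0$ weakly as $a\to\infty$.

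\emph{Stage 2: $to=o$.} Set $r^2=\|t_{1,a}o\|^2$; by permutation symmetry this does not depend on $a$.
\begin{itemize}
\item Write $t_{1,a}t_{1,b}o=U_a\,t_{1,b}o+t_{1,a}o$. Fix $a$ and let $b\to\infty$. Since $t_{1,b}o\to0$ weakly, the cross term $\langle t_{1,b}o,U_a^*t_{1,a}o\rangle$ tends to $0$. Hence $\|t_{1,a}t_{1,b}o\|^2\to 2r^2$.
\item On the other side, absorption \eqref{eq:template-absorption} gives
\[
t_{1,a}t_{1,b}=\gamma_{a,b}^{-1}t_{1,b}\gamma_{a,b}.
\]
Therefore
\[
\|t_{1,a}t_{1,b}o\|=d(\gamma_{a,b}o,\;t_{1,b}\gamma_{a,b}o).
\]
\item The goal is to show that this displacement tends to $r$ as $a,b\to\infty$ in a suitable order. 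That would give $2r^2=r^2$, so $r=0$, i.e.\ $t_{1,a}o=o$, and by symmetry $to=o$.
\end{itemize}

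\emph{Main obstacle.} The hard step is the asymptotics of $d(\gamma_{a,b}o,t_{1,b}\gamma_{a,b}o)$, because $\gamma_{a,b}$ is supported on the moving indices $a,b$, so \cref{lem:locality}(i) does not apply directly.

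My first attempt is to expand
\[
t_{1,b}\gamma_{a,b}o-\gamma_{a,b}o=(U_b-I)\gamma_{a,b}o+t_{1,b}o .
\]
I would then exploit $\|U_bx\|=\|x\|$ together with the weak convergence $U_b\to P_t$ and $t_{1,b}o\to0$, aiming to show that the cross terms vanish and that $\|(U_b-I)\gamma_{a,b}o\|\to0$. For this I would decompose $\gamma_{a,b}$ into a part supported at $a$ and a part supported at $b$, and use $\operatorname{pr}_I\circ h=\operatorname{pr}_I$ from \cref{rem: consequence about projections}. Both parts use only the locality and density statements already established.

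If this direct estimate fails, the fallback is to run the same comparison after conjugating by a sign change $\varepsilon_b$ that inverts $t_{1,b}$. That would symmetrize the absorption identity, in the spirit of \cref{lem:absorption-commutation}, before passing to the limit.
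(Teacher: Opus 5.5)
Your Stage 1 is correct and coincides with the paper's argument. Stage 2, however, has a genuine gap. The step you flag as the main obstacle is the whole content of the proposition, and your proposed reduction is circular.

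First, nothing in Stage 2 is asymptotic. By permutation symmetry, $\|t_{1,a}t_{1,b}o\|$ is independent of $a\neq b$, and $d(\gamma_{a,b}o,t_{1,b}\gamma_{a,b}o)$ equals it. So ``tends to $r$'' can only mean ``equals $r$''. Second, $E_t(o)=o$ upgrades to exact orthogonality. The vector $\operatorname{pr}_I(t_{1,a}o)$ does not depend on $a\notin I\cup\{1\}$, so it equals its weak limit $\operatorname{pr}_I(E_t(o))=o$. This gives $so\perp H_I$ for all $s\in\cT_p$ with $p\not\subseteq I$. With this, every cross term in your expansion vanishes exactly (use $\gamma_{a,b}o\in H_{\{a,b\}}$ and $U_b^*t_{1,b}o=-t_{1,b}^{-1}o$), and you get
\[
2r^2=\|t_{1,a}t_{1,b}o\|^2=\|(U_b-I)\gamma_{a,b}o\|^2+r^2 .
\]
Thus $\|(U_b-I)\gamma_{a,b}o\|=r$ identically, and showing that it tends to $0$ is literally the same as showing $r=0$. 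Your tools cannot provide this. The elements $t_{1,b}$ and $\gamma_{a,b}$ share the index $b$, so \eqref{eq:local-projection} says nothing about $\ell(t_{1,b})$ on $\gamma_{a,b}o$. Nor does $\gamma_{a,b}$ split into pieces supported at $a$ and at $b$ alone: $G(\{a\})$ and $G(\{b\})$ are trivial, and every generator of $G(\{a,b\})$ involves both indices. The sign-change fallback is too vague to address this.

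The missing idea is to rewrite the absorption relation so that only distances between orbit points $go$ appear, with each $g$ in a single local subgroup. Such distances follow from the exact orthogonality by Pythagoras, and no linear part is ever applied to a vector of overlapping support. By \cref{lem:absorption-commutation}, choose $u\in\{t,(1,2)\cdot t\}$ such that $X=u_{1,2}$ and $Y=u_{1,3}$ commute. Orthogonality gives $d(Xo,Yo)^2=d(X^{-1}o,Yo)^2=\|Xo\|^2+\|Yo\|^2$, and the middle quantity equals $d(o,XYo)^2$. Then \cref{lem:square} gives $XYo=Xo+Yo$; this is where commutation is needed. Take $K\in G(\{2,3\})$ with $KYK^{-1}=XY$. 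Then $Ko\perp Xo,Yo$ and $K^{-1}o\perp Y^{-1}o$, and the identity $(XY)^{-1}K=KY^{-1}$ yields
\[
\|Xo\|^2+\|Yo\|^2+\|Ko\|^2=d(XYo,Ko)^2=d(Y^{-1}o,K^{-1}o)^2=\|Yo\|^2+\|Ko\|^2,
\]
whence $Xo=o$.
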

\begin{proof}
We consider the barycenters $q_{1,a}$ from \cref{lem:locality}. 
These points converge weakly to $m=M^{-1}\sum_{t\in\cT}E_t(o)$.
Taking first $a\to\infty$ and then $b\to\infty$ in
\cref{lem:locality}(ii) implies that 
\[
  \|m\|^2
  =\lim_{b\to\infty}\lim_{a\to\infty}
    \langle q_{1,a},q_{1,b}\rangle\leq0.
\]
Hence $m=o$, so $o$ is the barycenter of
$(E_t(o))_{t\in\cT}$.

By the preceding observation, the projections $E_t$ have a common fixed
point~$q$. Since each $E_t$ is an orthogonal projection,
$E_t(o)\perp q-E_t(o)$. Since $o$ is the barycenter of
$(E_t(o))_{t\in\cT}$, we obtain
\[
  \sum_{t\in\cT}\|E_t(o)\|^2
  =\Bigl\langle q,\sum_{t\in\cT}E_t(o)\Bigr\rangle=0.
\]
Thus $E_t(o)=o$ for every $t\in\cT$.

For finite $I\subset\N$, permutations fixing $I\cup\{1\}$ pointwise
also fix $H_I$ pointwise.
Hence $\operatorname{pr}_I(t_{1,a}o)$ is independent of
$a\notin I\cup\{1\}$. By weak continuity of the projection,
its limit is $\operatorname{pr}_I(E_t(o))=o$.
So $t_{1,a}o\perp H_I$ for every $a\notin I\cup\{1\}$.
By permutation symmetry, applied also to the index set $I$, and the
independence of $\cT_p$ from the ordering of $p$, we obtain
\begin{equation}\label{eq: perpendicular}
  so\perp H_I
  \qquad\text{for }s\in\cT_p\text{ with }p\not\subseteq I.
\end{equation}

Fix $t\in\cT$. By \cref{lem:absorption-commutation}, one of the
templates $u=t$ or $u=(1,2)\cdot t$ satisfies $[u_{1,2},u_{1,3}]=1$.
Put $X=u_{1,2}$ and $Y=u_{1,3}$. By the absorption property there is an element
$K\in G(\{2,3\})$ with $KYK^{-1}=XY$.
From~\eqref{eq: perpendicular} we get $X^{\pm1}o\perp Yo$.
Pythagoras' theorem gives 
\[
  d(Xo,Yo)^2=d(o,Xo)^2+d(o,Yo)^2.
\]
Similarly, 
\[
d(o,XYo)^2=d(X^{-1}o, Yo)^2=d(o, X^{-1}o)^2+d(o, Yo)^2=d(o,Xo)^2+d(o,Yo)^2.
\]
By \cref{lem:square}, the diagonal midpoints coincide, so
$o,Xo,XYo,Yo$ form a rectangle. In particular, $XYo=Xo+Yo$.
As before, \eqref{eq: perpendicular} yields 
\[Ko\perp Xo,Yo\quad\text{and}\quad K^{-1}o\perp Y^{-1}o.\]
See \cref{fig:pythagoras-final} for schematic overview of the situation. 
Next we apply Pythagoras, invariance of the distance and, specifically for the second equality sign, the equation $(XY)^{-1}K=KY^{-1}$ to obtain 
\begin{align*}
  \|Xo\|^2+\|Yo\|^2+\|Ko\|^2
  =d(XYo,Ko)^2
  &=d(Y^{-1}o,K^{-1}o)^2\\
  &=\|Yo\|^2+\|Ko\|^2.
\end{align*}

\begin{figure}[htbp]\label{fig:pythagoras-final}
\centering
\begin{tikzpicture}[scale=0.9, every node/.style={font=\small}]
  \coordinate (O) at (0,0);
  \coordinate (X) at (2,0);
  \coordinate (Y) at (.8,.7);
  \coordinate (Z) at (2.8,.7);
  \coordinate (K) at (0,2);
  \fill[gray!8] (O)--(X)--(Z)--(Y)--cycle;
  \draw[thin] (O)--(X)--(Z)--(Y)--cycle;
  \draw[thick] (O)--(Z)--(K)--cycle;
  \draw[thin] (1.8,0)--(1.96,.14)--(2.16,.14);
  \draw[thin] (0,.2)--(.28,.27)--(.28,.07);
  \node[below left] at (O) {$o$};
  \node[below] at (X) {$Xo$};
  \node[above left] at (Y) {$Yo$};
  \node[right] at (Z) {$XYo$};
  \node[above] at (K) {$Ko$};
  \node at (1.5,-.65) {$Ko\perp\operatorname{span}\{Xo,Yo\}$};
  \begin{scope}[xshift=5.4cm]
    \draw[thick] (0,0)--(2.4,0)--(0,2)--cycle;
    \draw[thin] (.2,0)--(.2,.2)--(0,.2);
    \node[below left] at (0,0) {$o$};
    \node[below] at (2.4,0) {$Y^{-1}o$};
    \node[above] at (0,2) {$K^{-1}o$};
  \end{scope}
\end{tikzpicture}

\caption{Schematic view of the geometric configuration.}
\label{fig:pythagoras-final}
\end{figure}

Thus $Xo=o$, hence $uo=o$. Since $W$ fixes $o$, the transposition
$(1,2)$ gives $to=o$ in either case.
\end{proof}

\begin{proof}[Proof of \cref{thm:main}]
The proof is by contradiction. By assuming that there are infinitely many groups $G_n$ without property $(T)$ we construct in \cref{sec:limits} an affine action of $G$, the colimit of the groups $G_n$ without property~$(T)$, on a Hilbert space~$H$. 
By taking weak limits of elements in $G$ we construct affine operators $E_t$ on $H$ (see \cref{def:weak-limits}) which are affine orthogonal projections (\cref{prop:projection}) by the absorption property. 
By \cref{prop:commuting-limits}, the projections $E_t$ commute
pairwise. For that we require either the local conjugacy property or the local commutator property. Again by the absorption property and \cref{prop:final} we obtain that every $t\in\cT$ fixes $o$. This achieves the contradiction via \eqref{eq:limit-normalization}. So we proved that all but finitely many groups $G_n$ have property~$(T)$. 
\end{proof}

\section{Applications}\label{sec:applications}

\subsection{Higher Brin--Thompson groups}\label{sec:brin}

We write $\frakC=\{0,1\}^{\N}$ for the Cantor set of infinite binary sequences and let $\{0,1\}^{*}$ denote the set of finite binary words, including the empty word.
The Brin--Thompson group $nV$ is the group of homeomorphisms of $\frakC^n$ that are locally given by the coordinate-wise action of elements of $V^n$, where $V$ is Thompson's group with its standard action on $\frakC$.
To describe these homeomorphisms more concretely, we associate to each $w \in \{0,1\}^{*}$ the set $C(w)=w\frakC$ of infinite binary sequences starting with $w$, which we call the \emph{cylinder} corresponding to $w$.
A product $C(w_1)\times\cdots\times C(w_n)$ of such cylinders is referred to as a \emph{dyadic rectangle} in $\frakC^n$ and will be identified with its \emph{address} $\boldsymbol{w}=(w_1,\ldots,w_n)$.
For two
addresses $\boldsymbol{v}=(v_1,\ldots,v_n)$ and $\boldsymbol{w}=(w_1,\ldots,w_n)$, the \emph{prefix replacement} from $\boldsymbol{v}$ to $\boldsymbol{w}$ is the homeomorphism
given by
\[
(v_1\xi_1,\ldots,v_n\xi_n)
\longmapsto
(w_1\xi_1,\ldots,w_n\xi_n),
\qquad (\xi_1,\ldots,\xi_n)\in\frakC^n.
\]
With this terminology, a homeomorphism $g$ of $\frakC^n$ belongs to $nV$ if and only if there is a finite partition $\frakC^n=\bigsqcup_{i=1}^k R_i$ into dyadic rectangles such that each restriction $g|_{R_i}$ is a prefix replacement onto a dyadic rectangle.
Since $g$ is a homeomorphism, the image rectangles $g(R_1),\ldots,g(R_k)$ also form a partition of $\frakC^n$.
The generator templates that we will construct in order to apply \cref{thm:main} will be chosen from among the \emph{address transpositions}, defined as follows.
For two addresses $\boldsymbol{v}$ and $\boldsymbol{w}$ representing disjoint dyadic rectangles in $\frakC^n$, the address transposition $(\boldsymbol{v}\ \boldsymbol{w})\in nV$ is the homeomorphism that interchanges these rectangles by prefix replacement and acts as the identity on their complement.
Note that we can embed $nV$ into $(n+1)V$ by letting it act trivially on the additional coordinate.
Using these embeddings, we define the group $G = \bigcup_{n\geq2}nV$, which we think of as a group of homeomorphisms of the infinite Cartesian product $\frakC^{\N}$ in the canonical way.
Recall that $W=\bigl(\bigoplus_{i\in\N}C_2\bigr)\rtimes\Sym_{\mathrm{fin}}(\N)$.
The action of $\Sym_{\mathrm{fin}}(\N)$ on $\frakC^{\N}$ by coordinate permutations induces an action on $G$ by conjugation.
By letting all sign changes act trivially, we extend this to an action of $W$ and thus regard $G$ as a $W$-group.
Our generator templates for this $W$-group lie in $2V$, regarded as the subgroup of $G$ acting on the first two coordinates.
This allows us to describe them using address transpositions with $n=2$.
Put
\[
  \Lambda=\{00,01,10,11\},\qquad
  \mathcal E=\{(00,01),(01,10),(10,11)\},
\]
and define
\begin{align*}
 \cT_{\mathrm{grid}}
  ={}&\{((p,r)\ (q,r))\mid(p,q)\in\mathcal E,\ r\in\Lambda\}\\
  &{}\cup\{((r,p)\ (r,q))\mid(p,q)\in\mathcal E,\ r\in\Lambda\}.
\end{align*}
For $\varepsilon\in\{0,1\}$, let $z_\varepsilon=((00,00)\ (10\varepsilon,00))$.
We will consider the following set of generator templates
\[
  \cT=\cT_{\mathrm{grid}}\cup
  \{z_0,z_1,(1,2)\cdot z_0,(1,2)\cdot z_1\}.
\]
By construction, $\cT$ is invariant under $(1,2)$ and consists of $28$ involutions.

\begin{lemma}\label{lem:brin-generation}
For every finite set $I\subset\N$ with $|I|\geq2$, the group $G(I)$
is the standard copy of $|I|V$ on the coordinates indexed by $I$.
In particular, $G_n=nV$.
\end{lemma}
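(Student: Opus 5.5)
The plan is to prove the two inclusions separately. The inclusion $G(I)\subseteq |I|V$ is immediate. Every template in $\cT$ is an address transposition in $2V$ acting on coordinates $1,2$. Hence, for $i\ne j$ in $I$, each element of $\cT_{\{i,j\}}$ is an address transposition of the copy of $2V$ acting on coordinates $i,j$, and trivially on all other coordinates. In particular it lies in the standard copy of $|I|V$ on the coordinates indexed by $I$. The substance of the lemma is the reverse inclusion. I would reduce it to two claims:
\begin{enumerate}[label=(\alph*)]
\item $|I|V$ is generated by its address transpositions;
\item every address transposition of $|I|V$ lies in $G(I)$.
\end{enumerate}
By relabelling the coordinates with a permutation, I would assume $I=\{1,\dots,n\}$.

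For (a) I would use the standard argument. Let $g\in nV$ be given by a partition $\frakC^n=\bigsqcup_{i=1}^k R_i$ into dyadic rectangles with each $g|_{R_i}$ a prefix replacement onto $g(R_i)$. First refine the partition so that all $R_i$ and all $g(R_i)$ are proper, small rectangles. Then compose $g$ successively with address transpositions $(\boldsymbol v\ \boldsymbol w)$ to move $g(R_i)$ back to $R_i$, one rectangle at a time. When $R_i$ and $g(R_i)$ overlap without being equal, I would route through a small auxiliary rectangle disjoint from both. At the end we obtain a map that is the identity on every $R_i$, hence the identity.

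For (b) the key tool is conjugation. If $h\in G(I)$ restricts to prefix replacements $\boldsymbol v\mapsto\boldsymbol v'$ and $\boldsymbol w\mapsto\boldsymbol w'$, then $h(\boldsymbol v\ \boldsymbol w)h^{-1}=(\boldsymbol v'\ \boldsymbol w')$. So it suffices to show two things. First, $G(I)$ contains some address transposition. Second, $G(I)$ acts transitively, via prefix replacements, on ordered pairs of disjoint dyadic rectangles, up to suitable refinement. I would proceed in three steps.
\begin{enumerate}[label=(\arabic*)]
\item The grid templates in coordinates $i,j$ are the adjacent transpositions of the $4\times 4$ grid of depth-two cells in coordinates $i,j$, with all other coordinates unrestricted. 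They generate the full symmetric group on these $16$ cells. Using several pairs $\{i,j\}\subseteq I$, all of which act by prefix replacements on products of depth-two cylinders, we get every permutation of the cells of the depth-two grid of $\frakC^n$ that is realized within the $2$-coordinate grids. I would check that these together generate the full symmetric group on the $4^n$ depth-two cells, which only needs $n\geq2$.
\item The templates $z_0,z_1$ and $(1,2)\cdot z_0,(1,2)\cdot z_1$ swap a depth-two cell with a depth-three cell; the latter is one of the two halves of a depth-two cell, split along coordinate $i$ or coordinate $j$. Combining these with (1), I would show that $G(I)$ contains prefix replacements realizing all refinements of depth by one in any coordinate. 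Inductively in the total depth, every dyadic rectangle can then be carried to a prescribed depth-two cell. Moreover, any two disjoint rectangles can be carried simultaneously to two prescribed cells, after first separating them into disjoint depth-two regions.
\item Step (2) gives the transitivity on disjoint pairs. Combined with the transposition of two depth-two cells from (1) and the conjugation formula above, this yields every address transposition, proving (b).
\end{enumerate}

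The main obstacle is step (2): controlling the recursion in depth using only the two ``splitting'' templates $z_\varepsilon$. One must make the inductive statement precise, namely that for every depth $d$ and every rectangle of depth at most $d$, some element of $G(I)$ maps it onto a fixed reference cell by a prefix replacement, and prove it while keeping track of simultaneous control of two disjoint rectangles. I would first handle the case $n=2$, where $\cT$ lives, by explicit computation. Then I would pass to general $I$ using the $\Sym(I)$-symmetry and the fact that depth can be increased independently in each coordinate pair.
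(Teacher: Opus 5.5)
Your reduction of the reverse inclusion to (a) and (b) is sound, but the proposal does not prove the lemma. One step you rely on is false, and the step that carries the content of the lemma is only described.

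\textbf{Step (1) fails for $n\geq3$.} A grid template in $\cT_{\{i,j\}}$, viewed in $nV$, leaves the other $n-2$ coordinates unrestricted. So on the $4^n$ depth-two cells of $\frakC^n$ it acts as a product of $4^{n-2}$ disjoint transpositions, which is an even permutation. Products of grid templates map depth-two cells onto depth-two cells by prefix replacement, so they are determined by the induced permutation of cells. Hence the group they generate embeds into $\mathrm{Alt}(4^n)$. It therefore cannot contain the address transposition of two depth-two cells that you feed into step (3), since that induces an odd permutation. You can repair the seed by taking a template itself, which is already an address transposition in $nV$. But then everything rests on step (2).

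\textbf{Step (2) is where the lemma actually lives.} It hides two nontrivial facts that you do not establish:
\begin{enumerate}[label=(\roman*)]
\item $\langle\cT\rangle$ is all of $2V$;
\item the copies of $2V$ on the pairs of coordinates in $I$ generate $|I|V$.
\end{enumerate}
The paper gets (i) from Quick's three-element generating set $\{a,b,c\}$ of $2V$. There, $a,b$ lie in $\Sym(\Lambda^2)=\langle\cT_{\mathrm{grid}}\rangle$, and Quick's relation R7 expresses $c$ as $((1,2)\cdot z_1)((1,2)\cdot z_0)(uz_1u^{-1})(uz_0u^{-1})$ with $u=((10,00)\ (00,01))$. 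The paper gets (ii) from \cite[Proposition~3.2]{BZ} applied to the complete graph on $I$.

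Your ``explicit computation'' for $n=2$ and your appeal to $\Sym(I)$-symmetry and independent depth changes supply neither fact. Keeping two disjoint rectangles under simultaneous control is exactly the difficulty. A depth-changing template such as $z_\varepsilon$ moves a rectangle by a single prefix replacement only if the rectangle does not straddle the boundary of its support. The refinements needed to avoid this interact with your induction on depth, and you give no mechanism for handling that.

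\textbf{Your sketch of (a) does not terminate as written.} After composing with $(g(R_1)\ R_1)$, the new map is no longer a single prefix replacement on any $R_j$ whose image $g(R_j)$ meets $R_1$ without lying in it. So there is no well-defined next rectangle and no decreasing complexity. Claim (a) is true, but it needs a different argument. For instance, the subgroup generated by address transpositions is normal, because after refinement a conjugate of an address transposition is a product of address transpositions. Since $nV$ is simple, this subgroup is all of $nV$.
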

\begin{proof}
Consider the graph with vertex set $\Lambda^2$ in which two vertices are joined whenever their transposition belongs to $\cT_{\mathrm{grid}}$.
This graph is a $4\times4$-grid and hence connected.
Since the edge transpositions of a connected graph generate the full symmetric group on its vertex set, we obtain $\langle\cT_{\mathrm{grid}}\rangle=\Sym(\Lambda^2)$, where we view $\Sym(\Lambda^2)$ as the subgroup of $2V$ whose elements permute the rectangles with addresses in $\Lambda^2$ by prefix replacement.
In particular we see that the element $u=((10,00)\ (00,01))$ belongs to $\langle\cT_{\mathrm{grid}}\rangle$.
By conjugating $z_\varepsilon$ with $u$ we obtain
\[
  uz_\varepsilon u^{-1}
   =((00,00)\ (00\varepsilon,01))\qquad(\varepsilon=0,1).
\]
We now use these elements to show that $\langle\cT\rangle$ contains a three-element generating set $\{a,b,c\}$ of $2V$ provided by Quick \cite[Theorem~1.2 and Section~3]{Quick}.
In his construction, $a$ and $b$ generate $\Sym(\Lambda^2)$, so both belong to $\langle\cT\rangle$.
To obtain $c$, we choose the first three addresses in Quick's enumeration of $\Lambda^2$ to be 
$(00,00)$, $(00,01)$ and $(00,10)$.
Then his relation~R7 gives us
\[
  c=\bigl((1,2)\cdot z_1\bigr)\bigl((1,2)\cdot z_0\bigr)
    (uz_1u^{-1})(uz_0u^{-1}) \in \langle\cT\rangle
\]
and hence $\langle\cT\rangle = 2V$.
Thus, by permuting coordinates, we see that $\langle\cT_p\rangle$ is the standard copy of $2V$ on the coordinates in $p$ for every two-element subset $p\subseteq I$.
Applying \cite[Proposition~3.2]{BZ} to the complete graph on $I$, we conclude that $G(I)$ is the standard copy of $|I|V$ on the coordinates in $I$.
\end{proof}


\begin{lemma}\label{lem:brin-templates}
The set $\cT$ is a set of generator templates of the $W$-group $G$.
\end{lemma}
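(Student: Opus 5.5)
We verify the four conditions of \cref{def:coordinates} directly. Condition~(iv) is immediate from \cref{lem:brin-generation}: every element of $G$ lies in some $nV=G_n$, which is generated by $S_n$, the union of the sets $\cT_p$ with $p\subseteq\{1,\dots,n\}$. For condition~(i), sign changes act trivially on $G$ by definition. A permutation supported outside $\{1,2\}$ conjugates an element of $2V$, which acts only on coordinates $1,2$, by permuting coordinates it does not touch, so it fixes $\cT\subseteq 2V$ pointwise.

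For condition~(ii), each $t\in\cT$ is an involution, so $\cT$ is closed under inverses. Since $\varepsilon_1$ acts trivially, $\varepsilon_1(t)=t=t^{-1}$, which gives the sign-change requirement. Closure under $W_2$ reduces to invariance under the transposition $(1,2)$, because sign changes act trivially. The transposition $(1,2)$ swaps the two halves of $\cT_{\mathrm{grid}}$, mapping $((p,r)\ (q,r))$ to $((r,p)\ (r,q))$, and it swaps $z_\varepsilon$ with $(1,2)\cdot z_\varepsilon$.

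The main work is condition~(iii). For $t\in\cT$ and $i\ne j$, the element $t_{i,j}$ is the address transposition in $G$ whose addresses have the two nonempty components of $t$ placed in coordinates $i,j$ and the empty word in all other coordinates.

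We first show that $\cT_p\cap\cT_q=\emptyset$ for $p\ne q$. Each $t\in\cT$ genuinely depends on both of its coordinates. For the grid elements, $r$ is a nonempty restriction in one coordinate and $p\ne q$ in the other. For $z_\varepsilon$, the first coordinate is moved from $00$ to $10\varepsilon$, and the second coordinate is restricted to $00$. Hence the set of coordinates on which the homeomorphism $t_{i,j}$ of $\frakC^{\N}$ is not determined independently of the coordinate is exactly $\{i,j\}$. To make this precise, we can use the support of the homeomorphism: the support of $t_{i,j}$ is a set of the form $A\times\frakC^{\N\setminus\{i,j\}}$ with $A\subsetneq\frakC^{\{i,j\}}$, where $A$ is not itself a product of the form $B\times\frakC$ in either factor. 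This property is visible from the addresses, and it recovers the pair $\{i,j\}$.

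We next show that $[s,t]=1$ for $s\in\cT_p$ and $t\in\cT_q$ with $p\cap q=\emptyset$. Here $s$ acts only on the coordinates in $p$ and $t$ only on those in $q$. Homeomorphisms of $\frakC^{\N}$ of the form $f\times\mathrm{id}$ and $\mathrm{id}\times g$ with respect to disjoint coordinate blocks commute.

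The only step needing care is the disjointness $\cT_p\cap\cT_q=\emptyset$, specifically checking that no element of $\cT$ acts trivially on, or only through, a single coordinate. This is a short inspection of the $28$ templates.
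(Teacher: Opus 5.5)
Your proposal is correct and follows essentially the same route as the paper. Both check the four axioms directly, using that sign changes act trivially and that every template is an involution, and both get $\cT_p\cap\cT_q=\emptyset$ by recovering $p$ from an element of $\cT_p$. The only cosmetic difference is how $p$ is recovered: you take the set of coordinates on which the support of $t_{i,j}$ depends, whereas the paper takes the unique moved coordinate together with the unique coordinate controlling the move.
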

\begin{proof}
Every template lies in the standard copy of $2V$ on the first two coordinates.
Hence every permutation that fixes $1$ and $2$ fixes $\cT$ pointwise.
Since every template is an involution, $\cT$ is closed under taking inverses.
By definition, all sign changes act trivially, so $\varepsilon_1(t)=\varepsilon_2(t)=t=t^{-1}$ for every $t\in\cT$.
Together with the invariance of $\cT$ under $(1,2)$, this also shows that $\cT$ is preserved by $W_2$.
If $p\cap q=\varnothing$, then the elements of $\cT_p$ and $\cT_q$ act on disjoint coordinate pairs and hence commute.
Observe next that, by construction of $\cT$, for $p = \{i,j\}$, each $t\in\cT_p$ fixes all coordinates except one in $p$, say $i$.
In coordinate $i$, the element $t$ applies a fixed prefix transposition when coordinate $j$ lies in a prescribed non-empty proper cylinder, and acts as the identity otherwise.
Thus $j$ is the unique coordinate other than $i$ whose value affects the action on coordinate $i$.
Both $i$ and $j$ are therefore determined by $t$, so $\cT_p\cap\cT_q=\varnothing$ whenever $p\ne q$.
Finally, \cref{lem:brin-generation} gives us $G_n=nV$ for every $n\geq2$, so the sets $\cT_p$ together generate $G=\bigcup_{n\geq2}nV$.
\end{proof}

To verify the absorption property and the local conjugacy condition, we recall two well-known facts about clopen partitions and involutions in $nV$.

\begin{lemma}\label{lem:brin-matching}
For any two finite partitions
\[
  \frakC^n=\bigsqcup_{i=1}^m A_i=\bigsqcup_{i=1}^m B_i
\]
into non-empty clopen sets, there exists $g\in nV$ such that $g(A_i)=B_i$ for every $i=1,\ldots,m$.
Moreover, any two non-trivial involutions in $nV$ with non-empty fixed sets are conjugate in $nV$.
\end{lemma}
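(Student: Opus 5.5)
For the first assertion, I will refine both partitions into dyadic rectangles and then match the pieces one by one. Every non-empty clopen subset of $\frakC^n$ is a finite disjoint union of dyadic rectangles. Moreover, any dyadic rectangle can be split, by halving one coordinate cylinder, into two dyadic rectangles, so the number of pieces can be raised by one at a time. The plan is therefore to write each $A_i$ as a disjoint union of $k_i$ dyadic rectangles and each $B_i$ as a disjoint union of $l_i$ dyadic rectangles, and then to subdivide further until $k_i=l_i$ for every $i$. Having done this, I choose any bijection between the rectangles of $A_i$ and those of $B_i$. The map $g$ that acts on each rectangle of $A_i$ by prefix replacement onto its partner rectangle is a piecewise prefix replacement on a finite partition into dyadic rectangles, and its images again partition $\frakC^n$. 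Hence $g$ is a homeomorphism lying in $nV$ with $g(A_i)=B_i$. The one point to check is that a prefix replacement between two arbitrary dyadic rectangles is allowed; this is immediate from the definition, since the restrictions $g|_{R_i}$ may be any prefix replacements onto dyadic rectangles.

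For the second assertion, let $\sigma\in nV$ be a non-trivial involution. I will first show that its support $\operatorname{supp}\sigma=\frakC^n\setminus\Fix(\sigma)$ decomposes as $D\sqcup\sigma(D)$ with $D$ clopen and non-empty, and that $\sigma$ restricted to $D$ is a prefix replacement on a finite union of rectangles. To see this, take a partition into dyadic rectangles $R_i$ on which $\sigma$ acts by prefix replacement, and refine it to a common refinement of $\{R_i\}$ and $\{\sigma(R_i)\}$. After this refinement $\sigma$ permutes the pieces, so each piece is either mapped to itself or swapped with a different piece. A piece $R$ with $\sigma(R)=R$ carries a prefix replacement from $R$ onto itself; since the prefixes must agree, this is the identity on $R$. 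Consequently $\Fix(\sigma)$ is clopen, and the remaining pieces come in swapped pairs $(R,\sigma R)$. I then let $D$ be the union of one representative from each pair.

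Now let $\tau$ be another such involution, with corresponding decomposition $\frakC^n=\Fix(\tau)\sqcup D'\sqcup\tau(D')$. By hypothesis both $\Fix(\sigma)$ and $\Fix(\tau)$ are non-empty, and both $D$ and $D'$ are non-empty because the involutions are non-trivial, so all six sets are non-empty clopen sets. By the first part there is $h\in nV$ with $h(D)=D'$. I then define $g$ to equal $h$ on $D$, to equal $\tau h\sigma$ on $\sigma(D)$, and to be an arbitrary element of $nV$ mapping $\Fix(\sigma)$ onto $\Fix(\tau)$; such an element exists by the first part applied to the partitions $\{\Fix(\sigma),\ \frakC^n\setminus\Fix(\sigma)\}$ and $\{\Fix(\tau),\ \frakC^n\setminus\Fix(\tau)\}$. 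The three pieces glue to a locally-prefix-replacement homeomorphism, so $g\in nV$. It remains to check $g\sigma g^{-1}=\tau$ on each of the three pieces. On $D'$ this is $\tau h\sigma\cdot\sigma\cdot h^{-1}=\tau$, on $\tau(D')$ it follows similarly using $\sigma^2=1$, and on $\Fix(\tau)$ both sides are the identity.

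I expect the main obstacle to be the structural step for a single involution: establishing that after a common refinement, a piece fixed setwise by $\sigma$ is fixed pointwise. This is the step where prefix-replacement rigidity is essential, and I plan to handle it by comparing prefix lengths.
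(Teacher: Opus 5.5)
Your proof is correct. The first assertion is argued exactly as in the paper.

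For the second assertion you take a genuinely different route. The paper does not argue directly. It invokes \cite[Proposition~3.1]{MartinezPerezNucinkis18}, which says that $nV$ has exactly two conjugacy classes of subgroups of order two. It then notes that having a fixed point is a conjugacy invariant and that both kinds of involutions occur, so the involutions with fixed points form one class.

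You instead derive the conjugacy from the first assertion. The common refinement $\{R_i\cap\sigma(R_j)\}$ is $\sigma$-invariant precisely because $\sigma^2=1$, and it yields the clopen decomposition $\frakC^n=\Fix(\sigma)\sqcup D\sqcup\sigma(D)$. The explicit conjugator built from $h$, $\tau h\sigma$ and a map $\Fix(\sigma)\to\Fix(\tau)$ then works; $g\sigma g^{-1}=\tau$ holds on each of the three pieces. The step you flagged as the main obstacle is immediate: $C(v)=C(w)$ forces $v=w$, so a prefix replacement of a rectangle onto itself is the identity.

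Your route is self-contained, using nothing beyond the matching statement. It also proves more. If both fixed sets are empty, you can drop that piece and match $\{D,\sigma(D)\}$ with $\{D',\tau(D')\}$. Combined with $\Fix(g\sigma g^{-1})=g\,\Fix(\sigma)$, this reproves the full two-class classification of non-trivial involutions that the paper imports from the literature. The paper's route is shorter on the page but outsources exactly this structural fact.

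One small economy: apply the first assertion once, to the three-piece partitions $\{\Fix(\sigma),D,\sigma(D)\}$ and $\{\Fix(\tau),D',\tau(D')\}$. This gives $h$ and the map on fixed sets at the same time.
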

\begin{proof}
Every clopen subset of $\frakC^n$ is a finite disjoint union of dyadic rectangles.
For each pair $(A_i,B_i)$, choose such decompositions of $A_i$ and $B_i$ and refine them so that they contain the same number of rectangles.
For each $i$, choose a bijection between the rectangles in the decomposition of $A_i$ and those in the decomposition of $B_i$.
The corresponding prefix replacements together define an element $g\in nV$ satisfying $g(A_i)=B_i$ for every $i=1,\ldots,m$.
For the second assertion we apply \cite[Proposition~3.1]{MartinezPerezNucinkis18}, which tells us that the group $nV$ has exactly two conjugacy classes of subgroups of order two, and hence of non-trivial involutions.
As $nV$ contains involutions both with and without fixed points, all non-trivial involutions with fixed points must belong to the same conjugacy class, which completes the proof.
\end{proof}

\begin{lemma}\label{lem:brin-absorption}
The set $\cT$ satisfies the absorption property.
\end{lemma}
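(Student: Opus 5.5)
For every $t\in\cT$ we must find $\gamma\in G(\{2,3\})$ with $\gamma\, t_{1,2}t_{1,3}\,\gamma^{-1}=t_{1,3}$. By \cref{lem:brin-generation}, $G(\{2,3\})$ is the standard copy of $2V$ on coordinates $2,3$. So it suffices to find an element of $2V$ acting on coordinates $2,3$ that conjugates $X Y$ to $Y$, where $X=t_{1,2}$ and $Y=t_{1,3}$. Since $\cT$ is $(1,2)$-invariant, the templates come in two kinds. In the first kind the involution acts on coordinate $1$, controlled by coordinate $2$: $t_{1,a}$ applies a fixed prefix transposition $\tau$ in coordinate $1$ whenever coordinate $a$ lies in a fixed nonempty proper clopen set $C$. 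In the second kind the roles are reversed: coordinate $a$ is transposed while coordinate $1$ is the control. I will treat the two kinds separately.

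\emph{Controller in coordinate $1$.} Here $X$ applies $\tau$ in coordinate $1$ when $\xi_2\in C$, and $Y$ applies the same $\tau$ when $\xi_3\in C$. Both act by the same $\tau$ in coordinate $1$, so they commute, and $XY$ applies $\tau$ exactly when $\xi_2\in C$ and $\xi_3\in C$ differ in truth value. That is, $XY$ applies $\tau$ when $(\xi_2,\xi_3)$ lies in $D=(C\times C^c)\cup(C^c\times C)$. Meanwhile $Y$ applies $\tau$ when $(\xi_2,\xi_3)\in D'=\frakC\times C$. Both $D$ and $D'$ are nonempty proper clopen subsets of $\frakC^2$. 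By \cref{lem:brin-matching} there is $h\in 2V$ with $h(D)=D'$ and $h(D^c)=D'^c$. Let $\gamma$ be $h$ acting on coordinates $2,3$. Then $\gamma XY\gamma^{-1}$ applies $\tau$ in coordinate $1$ exactly on $\gamma(D)=D'$, so it equals $Y$. This case is routine.

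\emph{Controller in coordinate $a$.} Now $X$ applies the prefix transposition $\tau$ in coordinate $2$ when $\xi_1\in C$, and $Y$ does the same in coordinate $3$. They commute because they act on different coordinates under the same condition on coordinate $1$. The product $XY$ applies $\tau\times\tau$ on coordinates $(2,3)$ when $\xi_1\in C$, and $Y$ applies $\mathrm{id}\times\tau$ under the same condition. It therefore suffices to find $\gamma\in 2V$ on coordinates $2,3$ conjugating $\tau\times\tau$ to $\mathrm{id}\times\tau$. Both are nontrivial involutions of $\frakC^2$ in $2V$, and both have nonempty fixed sets: $\tau$ swaps two disjoint cylinders and fixes the complement, which is nonempty, as one checks for each template (the relevant cylinders, such as $00$ and $01$, or $00$ and $100$, never cover $\frakC$). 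The second assertion of \cref{lem:brin-matching} then gives the conjugator, and conjugating by $\gamma$ in coordinates $2,3$ commutes with the condition on $\xi_1$. One caveat: for $z_\varepsilon$ the transposition acts on coordinate $1$ and the controller is on coordinate $2$ (the $00$ in the second slot). Its $(1,2)$-image has the transposition on coordinate $2$ and the controller on coordinate $1$. The templates in $\cT_{\mathrm{grid}}$ similarly split into one type of each kind. So every template falls into exactly one of the two cases.

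The main obstacle is bookkeeping rather than concept. I need to verify precisely that each of the $28$ templates has the form ``transposition of two cylinders in one coordinate, controlled by a proper cylinder in the other''. I also need the fixed-point sets in the second case to be nonempty. Once this is checked, the two cases above use \cref{lem:brin-matching} exactly as indicated.
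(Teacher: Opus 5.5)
Your proposal is correct and follows the paper's proof essentially verbatim. It uses the same case split according to which coordinate carries the prefix transposition. The sets $(C\times C^c)\sqcup(C^c\times C)$ and $\frakC\times C$ are matched via the first assertion of \cref{lem:brin-matching}, and $\tau\times\tau$ is conjugated to $\mathrm{id}\times\tau$ via its second assertion, using nonemptiness of $\Fix(\tau)$. The only blemish is cosmetic: your two case headings are interchanged relative to their content. In your first case coordinate $1$ is transposed and coordinate $a$ is the controller, and in your second case it is the other way round.
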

\begin{proof}
Let $t\in\cT$.
From the definition of $\cT$ it follows that there exists a prefix transposition $\sigma=(p\ q) \in V$ and a non-empty proper cylinder $R=C(r)$ such that $t$ applies $\sigma$ in one of the first two coordinates when the entry in the other coordinate lies in $R$, and acts as the identity otherwise.
All coordinates except the one on which $\sigma$ acts are fixed.
Moreover, $H=\Fix(\sigma)=\frakC\setminus(C(p)\cup C(q))$ is non-empty.
We have to show that $t_{1,2}t_{1,3}$ is conjugate to $t_{1,3}$ by an element of $G(\{2,3\})$.
By \cref{lem:brin-generation}, this subgroup is the standard copy of $2V$ on coordinates $2,3$.

Suppose first that $t$ changes coordinate $1$, and let $\xi=(\xi_i)_{i\in\N}\in\frakC^{\N}$.
Then $t_{1,2}$ applies $\sigma$ to $\xi_1$ when $\xi_2\in R$, whereas $t_{1,3}$ does so when $\xi_3\in R$.
Both elements leave all other entries unchanged.
Since $\sigma^2=1$, we obtain
\[
(t_{1,2}t_{1,3})(\xi)=
\begin{cases}
(\sigma(\xi_1),\xi_2,\xi_3,\ldots),
& \text{if exactly one of $\xi_2,\xi_3$ lies in $R$},\\
\xi,
& \text{otherwise}.
\end{cases}
\]
We write $R^c=\frakC\setminus R$ and define
\[
A=(R\times R^c)\sqcup(R^c\times R),
\qquad
B=\frakC\times R.
\]
Then $t_{1,2}t_{1,3}$ applies $\sigma$ to $\xi_1$ precisely when $(\xi_2,\xi_3)\in A$, whereas $t_{1,3}$ does so precisely when $(\xi_2,\xi_3)\in B$.
The sets $A$ and $B$ are non-empty, clopen and have non-empty complements.
We can therefore apply the first assertion of \cref{lem:brin-matching} to the partitions $(A,A^c)$ and $(B,B^c)$ to obtain an element $g\in G(\{2,3\})$ whose action on the coordinates $2,3$ sends $A$ onto $B$.
Since $g$ fixes coordinate $1$, this gives
\[
g(t_{1,2}t_{1,3})g^{-1}=t_{1,3}.
\]
Suppose now that $t$ changes coordinate $2$.
When the entry in coordinate $1$ lies in $R$, the elements $t_{1,2}t_{1,3}$ and $t_{1,3}$ act on coordinates $2,3$ as $\sigma\times\sigma$ and $\mathrm{id}\times\sigma$, respectively.
Outside this subset, both elements act as the identity.
Thus, the elements $\sigma\times\sigma$ and $\mathrm{id}\times\sigma$ are non-trivial involutions in $2V$ with fixed sets $H\times H$ and $\frakC\times H$, respectively.
The second assertion of \cref{lem:brin-matching} provides us with an element $g\in G(\{2,3\})$ that conjugates the first involution to the second.
As $g$ fixes coordinate $1$, it preserves the condition that the entry in this coordinate lies in $R$.
It therefore follows that $g(t_{1,2}t_{1,3})g^{-1}=t_{1,3}$ on all of $\frakC^{\N}$, which completes the proof.
\end{proof}

\begin{proposition}\label{prop:brin-conjugacy}
The set $\cT$ satisfies the local conjugacy condition.
\end{proposition}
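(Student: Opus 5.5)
We verify \cref{def:local-conjugacy} directly. The plan is to use the auxiliary coordinate~$4$ to separate the supports. We conjugate $t$ into an element $s_{1,2}$ that acts non-trivially only where $\xi_4\in C(0)$, and $u$ into an element $w_{1,2}$ that acts non-trivially only where $\xi_4\in C(1)$. Every template is an involution, so the first requirement of \cref{def:local-conjugacy} holds. As in the proof of \cref{lem:brin-absorption}, each $t\in\cT$ is described by a prefix transposition $\sigma=(p\ q)\in V$ with non-empty fixed set and a non-empty proper cylinder $R=C(r)$. There are two cases, according to whether the active coordinate of $t=t_{1,2}$ is $1$ or $2$.

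\emph{Case A: $t$ changes coordinate $1$.} Then $t_{1,2}$ applies $\sigma$ to $\xi_1$ exactly when $\xi_2\in R$. Put $A_t=R\times C(0)\subset\frakC^2$ in the coordinates $(2,4)$. The sets $R\times\frakC$ and $A_t$ are non-empty clopen sets with non-empty complements. The first part of \cref{lem:brin-matching}, applied in the standard copy $G(\{2,4\})$ of $2V$ (\cref{lem:brin-generation}), gives $g$ with $g(R\times\frakC)=A_t$. Since $g$ fixes coordinate $1$, the element $s_{1,2}=gtg^{-1}$ applies $\sigma$ to $\xi_1$ exactly when $(\xi_2,\xi_4)\in A_t$. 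Consequently $s_{1,2}s_{1,3}$ applies $\sigma$ to $\xi_1$ exactly when $\xi_4\in C(0)$ and exactly one of $\xi_2,\xi_3$ lies in $R$. On the other hand, $s_{1,3}$ does so exactly when $\xi_4\in C(0)$ and $\xi_3\in R$. The conjugator from the first case of \cref{lem:brin-absorption} lies in $G(\{2,3\})$, sends $(R\times R^c)\sqcup(R^c\times R)$ onto $\frakC\times R$, and does not touch $\xi_4$. It therefore conjugates $s_{1,2}s_{1,3}$ to $s_{1,3}$, which gives condition~(i).

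\emph{Case B: $t$ changes coordinate $2$.} Then $t_{1,2}$ applies $\sigma\times\mathrm{id}$ to $(\xi_2,\xi_4)$ when $\xi_1\in R$. Let $\tau\in G(\{2,4\})$ be the involution that applies $\sigma$ to $\xi_2$ when $\xi_4\in C(0)$ and is the identity otherwise. Both $\sigma\times\mathrm{id}$ and $\tau$ are non-trivial involutions of $2V$ with non-empty fixed sets. The second part of \cref{lem:brin-matching} therefore yields $g\in G(\{2,4\})$ with $g(\sigma\times\mathrm{id})g^{-1}=\tau$. Since $g$ fixes coordinate $1$, the element $s_{1,2}=gtg^{-1}$ applies $\tau$ when $\xi_1\in R$. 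The absorption relation follows as in the second case of \cref{lem:brin-absorption}. For $\xi_1\in R$ and $\xi_4\in C(0)$, the elements $s_{1,2}s_{1,3}$ and $s_{1,3}$ act on $(\xi_2,\xi_3)$ as $\sigma\times\sigma$ and $\mathrm{id}\times\sigma$, respectively; elsewhere both act trivially. The conjugator in $G(\{2,3\})$ from that proof never involves $\xi_1$ or $\xi_4$, so it works unchanged.

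We construct $w_{1,2}$ from $u$ in the same way, replacing $C(0)$ by $C(1)$ in the fourth coordinate. For condition~(ii), note three facts. First, $s_{1,2}$ acts non-trivially only on points with $\xi_4\in C(0)$, and $w_{1,3}$ only on points with $\xi_4\in C(1)$. Second, both elements preserve $\xi_4$. Third, each element preserves the clopen pieces $\{\xi_4\in C(0)\}$ and $\{\xi_4\in C(1)\}$. Hence on each piece one of the two elements is the identity, and so $s_{1,2}w_{1,3}=w_{1,3}s_{1,2}$. This argument is uniform over all combinations of Cases A and B for $t$ and $u$, and it also covers $u=t$.

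The step that needs the most care is Case~B. There we must check that the involution $\tau$, which depends on two coordinates, really has a non-empty fixed set, so that \cref{lem:brin-matching} applies. We must also check that the conjugator from \cref{lem:brin-absorption} still works once everything is restricted to the region $\xi_4\in C(0)$. Both checks are routine once the maps are written out coordinatewise.
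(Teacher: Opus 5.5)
Your proposal is correct and follows essentially the same route as the paper. The paper defines $s_{1,2}$ and $w_{1,2}$ directly as $t$ (resp.\ $u$) on $\{\xi_4\in C(0)\}$ (resp.\ $\{\xi_4\in C(1)\}$) and the identity elsewhere, whereas you define $s_{1,2}=gtg^{-1}$ and compute it; this yields the same elements, and the rest matches: the conjugators in $G(\{2,4\})$ come from the two parts of \cref{lem:brin-matching} as in your Cases A and B, the conjugators $h_t,h_u\in G(\{2,3\})$ come from \cref{lem:brin-absorption}, and commutation follows from the disjoint $\xi_4$-supports.
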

\begin{proof}
Let $t,u\in\cT$.
For $\xi=(\xi_i)_{i\in\N}\in\frakC^{\N}$, define
\[
s_{1,2}(\xi)=
\begin{cases}
t(\xi), & \xi_4\in C(0),\\
\xi, & \xi_4\in C(1),
\end{cases}
\qquad
w_{1,2}(\xi)=
\begin{cases}
\xi, & \xi_4\in C(0),\\
u(\xi), & \xi_4\in C(1).
\end{cases}
\]
Both elements fix all coordinates outside $\{1,2,4\}$ and are locally given by prefix replacements on these three coordinates.
They therefore belong to $G(\{1,2,4\})$ by \cref{lem:brin-generation}.
We first show that $s_{1,2}$ is conjugate to $t$ by an element of $G(\{2,4\})$.
From \cref{lem:brin-generation} we know that the latter subgroup is the standard copy of $2V$ on the coordinates $2,4$.
Let $\sigma=(p\ q)$ be the prefix transposition applied by $t$ and let $R=C(r)$ be the cylinder in the other coordinate determining when $\sigma$ is applied.
By construction, the sets $R$ and $\Fix(\sigma)$ are non-empty clopen proper subsets of $\mathfrak{C}$.

Suppose first that $t$ changes the first coordinate and define
\[
A=R\times\frakC,
\qquad
B=R\times C(0).
\]
Then $t$ applies $\sigma$ to $\xi_1$ precisely when $(\xi_2,\xi_4)\in A$, whereas $s_{1,2}$ does so when $(\xi_2,\xi_4)\in B$.
Both sets are non-empty, clopen and have non-empty complements.
This allows us to apply the first assertion of \cref{lem:brin-matching} to the partitions $(A,A^c)$ and $(B,B^c)$ in order to obtain an element $g\in G(\{2,4\})$ whose action on coordinates $2,4$ sends $A$ onto $B$.
Since $g$ fixes coordinate $1$, this gives
\[
gtg^{-1}=s_{1,2}.
\]
Suppose next that $t$ changes the second coordinate.
In this case we consider $\sigma\times\mathrm{id}$ on the coordinates $2,4$ and define the involution $\beta\in 2V$ by
\[
\beta(x,y)=
\begin{cases}
(\sigma(x),y), & y\in C(0),\\
(x,y), & y\in C(1).
\end{cases}
\]
These are non-trivial involutions with fixed sets
\[
\Fix(\sigma)\times\frakC
\quad\text{and}\quad
\bigl(\Fix(\sigma)\times C(0)\bigr)
\sqcup
\bigl(\frakC\times C(1)\bigr),
\]
respectively.
Since both fixed sets are non-empty, the second assertion of \cref{lem:brin-matching} endows us with an element $g\in G(\{2,4\})$ such that, on coordinates $2,4$,
\begin{equation}\label{eq:conjugation}
g(\sigma\times\mathrm{id})g^{-1}=\beta.
\end{equation}
We again claim that $gtg^{-1}=s_{1,2}$.
When $\xi_1\in R$, the elements $t$ and $s_{1,2}$ act on coordinates $2,4$ as $\sigma\times\mathrm{id}$ and $\beta$, respectively.
When $\xi_1\notin R$, both elements act as the identity.
Since $g$ fixes coordinate $1$, we may therefore deduce from~\eqref{eq:conjugation} that $gtg^{-1}=s_{1,2}$ on all of $\frakC^{\N}$.
Similarly, $w_{1,2}$ is conjugate to $u$ by an element of $G(\{2,4\})$, as follows by applying the above argument to $u$ with the roles of $C(0)$ and $C(1)$ exchanged.

Let us now consider the elements $s_{1,3}:=(2,3)\cdot s_{1,2}$ and $w_{1,3}:=(2,3)\cdot w_{1,2}$.
Since $(2,3)$ fixes $4$, the element $s_{1,3}$ agrees with $t_{1,3}$ when $\xi_4\in C(0)$ and acts as the identity otherwise.
Likewise, $w_{1,3}$ agrees with $u_{1,3}$ when $\xi_4\in C(1)$ and acts as the identity otherwise.
Using the absorption property established in \cref{lem:brin-absorption}, we obtain $h_t,h_u\in G(\{2,3\})$ with
\[
h_t(t_{1,2}t_{1,3})h_t^{-1}=t_{1,3},
\qquad
h_u(u_{1,2}u_{1,3})h_u^{-1}=u_{1,3}.
\]
We claim that these elements also satisfy
\[
h_t(s_{1,2}s_{1,3})h_t^{-1}=s_{1,3},
\qquad
h_u(w_{1,2}w_{1,3})h_u^{-1}=w_{1,3}.
\]
To see this, we note that $h_t$, $h_u$ fix coordinate $4$.
If $\xi_4\in C(0)$, the definitions of $s_{1,2},s_{1,3}$ and the choice of $h_t$ give us
\[
\begin{aligned}
h_t(s_{1,2}s_{1,3})h_t^{-1}(\xi)
&=h_t(t_{1,2}t_{1,3})h_t^{-1}(\xi)\\
&=t_{1,3}(\xi)\\
&=s_{1,3}(\xi).
\end{aligned}
\]
If $\xi_4\in C(1)$, the elements $s_{1,2}$ and $s_{1,3}$ fix $h_t^{-1}(\xi)$, so both sides of the first claimed identity fix $\xi$.
Similarly, if $\xi_4\in C(1)$, we obtain
\[
\begin{aligned}
h_u(w_{1,2}w_{1,3})h_u^{-1}(\xi)
&=h_u(u_{1,2}u_{1,3})h_u^{-1}(\xi)\\
&=u_{1,3}(\xi)\\
&=w_{1,3}(\xi).
\end{aligned}
\]
If $\xi_4\in C(0)$, the elements $w_{1,2}$ and $w_{1,3}$ fix $h_u^{-1}(\xi)$, so both sides of the second claimed identity fix $\xi$, which completes the verification of \cref{def:local-conjugacy}(i).

To verify \cref{def:local-conjugacy}(ii), we note that $s_{1,2}$ fixes every point $\xi$ with $\xi_4\in C(1)$, whereas $w_{1,3}$ fixes every point $\xi$ with $\xi_4\in C(0)$.
Since both elements fix coordinate $4$, we deduce that $s_{1,2}$ and $w_{1,3}$ commute.
\end{proof}

\begin{proof}[Proof of \cref{thm:main-intro}]
By \cref{lem:brin-templates,lem:brin-absorption,prop:brin-conjugacy}, all hypotheses of \cref{thm:main} are satisfied.
Since $G_n=nV$ by \cref{lem:brin-generation}, its conclusion gives the desired result.
\end{proof}

\subsection{Automorphisms of free groups}\label{sec:aut}
We now turn to automorphism groups of free groups $F_n$ and show how \cref{thm:main} yields property~$(T)$ for these groups in sufficiently large rank.
To do so, it will be convenient to work with the groups $\SAut(F_n)$, which denote the kernel of the 
composition
\[
\Aut(F_n) \longrightarrow \Aut(F_n/[F_n,F_n]) =  \operatorname{GL}_n(\mathbb{Z})
\xrightarrow{\det}\{\pm1\}.
\]
For the remainder of this subsection, we fix a basis $x_1,\ldots,x_n$ of $F_n$.
Using the latter, we obtain a canonical embedding of $\SAut(F_n)$ into $\SAut(F_{n+1})$ by extending each automorphism trivially to the additional generator $x_{n+1}$.
We can therefore define the direct union
\[
G=\bigcup_{n\geq1}\SAut(F_n),
\]
which one can think of as a subgroup of $\Aut(F_\infty)$, where $F_\infty$ is the free group with basis $\{x_i\mid i\in\N\}$.
The group $W=\left(\bigoplus_{i\in\N}C_2\right)\rtimes\Sym_{\mathrm{fin}}(\N)$ admits a natural action on $F_\infty$ by letting each permutation $\pi\in\Sym_{\mathrm{fin}}(\N)$ send $x_j$ to $x_{\pi(j)}$ and each sign change $\varepsilon_i$ send $x_i$ to $x_i^{-1}$ while fixing all other basis elements.
Since these automorphisms normalize $G$, we obtain a $W$-group structure on $G$ by conjugation.
Our generator templates for this $W$-group will be the Nielsen transformations that only involve $x_1$ and $x_2$.
Recall that, for distinct $i,j$, the right and left \emph{Nielsen transformations} $R_{ij},L_{ij}\in G$ fix all basis elements other than $x_i$, and act on $x_i$ by
\[
R_{ij}(x_i)=x_ix_j,
\qquad
L_{ij}(x_i)=x_jx_i.
\]
With this notation, our set of generator templates is
\[
\cT=\{R_{12}^{\pm1},L_{12}^{\pm1},R_{21}^{\pm1},L_{21}^{\pm1}\}.
\]
The choice of $\cT$ is motivated by the fact that $\SAut(F_n)$ is generated by its Nielsen transformations, see e.g.~\cite[Section~3]{Nitsche26}.

\begin{lemma}\label{lem:aut-templates}
The set $\cT$ is a set of generator templates for the $W$-group $G$.
Moreover, $G_n=\SAut(F_n)$ for every $n\geq1$.
\end{lemma}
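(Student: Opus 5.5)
We verify the four conditions of \cref{def:coordinates} for $\cT=\{R_{12}^{\pm1},L_{12}^{\pm1},R_{21}^{\pm1},L_{21}^{\pm1}\}$, and then identify $G_n$ with $\SAut(F_n)$.

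First, we compute how $W$ acts on Nielsen transformations by conjugation. For a permutation $\pi$, conjugation sends $R_{ij}$ to $R_{\pi(i)\pi(j)}$ and $L_{ij}$ to $L_{\pi(i)\pi(j)}$. Hence every permutation fixing $1$ and $2$ fixes $\cT$ pointwise, and $(1,2)$ swaps $R_{12}\leftrightarrow R_{21}$ and $L_{12}\leftrightarrow L_{21}$. For the sign changes, a direct check on $x_i$ shows that $\varepsilon_j$ conjugates $R_{ij}$ to $R_{ij}^{-1}$ and $L_{ij}$ to $L_{ij}^{-1}$: for instance, $\varepsilon_jR_{ij}\varepsilon_j$ sends $x_i\mapsto x_ix_j^{-1}$. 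Likewise $\varepsilon_i$ conjugates $R_{ij}$ to $L_{ij}^{-1}$, which is still in $\cT$. A sign change $\varepsilon_k$ with $k\notin\{i,j\}$ fixes $R_{ij}$ and $L_{ij}$. This gives condition~(i) and the closure of $\cT$ under $W_2$ and inverses. For the inversion requirement in~(ii), every $t\in\cT$ is inverted by $\varepsilon_j$, where $j$ is the second index of $t$: for $t=R_{12}^{\pm1}$ or $L_{12}^{\pm1}$ we have $\varepsilon_2(t)=t^{-1}$, and for $t=R_{21}^{\pm1}$ or $L_{21}^{\pm1}$ we have $\varepsilon_1(t)=t^{-1}$.

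For condition~(iii), note that $\cT_{\{i,j\}}=\{R_{ij}^{\pm1},L_{ij}^{\pm1},R_{ji}^{\pm1},L_{ji}^{\pm1}\}$. These sets are pairwise disjoint for distinct pairs, because the unordered pair $\{i,j\}$ can be recovered from the automorphism: it moves only $x_i$, and the image of $x_i$ involves exactly the basis letters $x_i,x_j$. If $\{i,j\}\cap\{k,l\}=\varnothing$, then the two transformations move disjoint sets of basis elements and their images involve disjoint sets of letters. Checking on each basis element therefore shows that they commute.

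For condition~(iv) and the identification $G_n=\SAut(F_n)$, we first observe that $G(\{1,\dots,n\})$ is generated by all $R_{ij}^{\pm1},L_{ij}^{\pm1}$ with $i\ne j\le n$. By the classical theorem, quoted in the paper via \cite[Section~3]{Nitsche26}, these Nielsen transformations generate $\SAut(F_n)$. Hence $G_n=\SAut(F_n)$, and taking the union over $n$ shows that the $\cT_p$ generate $G$. The only real content here is the cited generation result, which we use as a black box. Everything else is routine bookkeeping. The one spot needing a little care is the sign-change computation, where one must confirm that $\varepsilon_i$ maps $R_{ij}$ to $L_{ij}^{-1}$ rather than to something outside $\cT$; this is what makes $\cT$ closed under $W_2$.
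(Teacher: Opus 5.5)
Your proposal is correct and follows essentially the same route as the paper. Both arguments use the same conjugation identities $\varepsilon_j\cdot R_{ij}=R_{ij}^{-1}$, $\varepsilon_i\cdot R_{ij}=L_{ij}^{-1}$ (and their $L$-analogues) for $W_2$-invariance and condition~(ii), the explicit description of $\cT_p$ for disjointness and commutation, and Nitsche's generation result for $G_n=\SAut(F_n)$; the degenerate case $n=1$, where both groups are trivial, is left implicit in both.
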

\begin{proof}
The set $\cT$ is finite, non-empty and closed under taking inverses.
Since the transformations in $\cT$ involve only $x_1$ and $x_2$, every element of $W$ supported outside $\{1,2\}$ fixes $\cT$ pointwise.
The transposition $(1,2)$ interchanges the indices $1$ and $2$ in the Nielsen transformations and hence preserves $\cT$.
For the sign changes, direct calculation gives us
\begin{equation}\label{eq:applying-epilon-autfn}
\varepsilon_j\cdot R_{ij} =R_{ij}^{-1}, \quad
\varepsilon_j\cdot L_{ij} =L_{ij}^{-1}, \quad
\varepsilon_i\cdot R_{ij} =L_{ij}^{-1}, \quad
\varepsilon_i\cdot L_{ij} =R_{ij}^{-1}
\end{equation}
for $i \ne j$, which shows that $\varepsilon_1$ and $\varepsilon_2$ preserve $\cT$.
Together this tells us that $\cT$ is invariant under $W_2$.
The first two identities in~\eqref{eq:applying-epilon-autfn} also show that every $t\in\cT$ is sent to $t^{-1}$ by $\varepsilon_1$ or $\varepsilon_2$.
Next we note that for every $p=\{i,j\}$, we have
\[
\cT_p=\{R_{ij}^{\pm1},L_{ij}^{\pm1},R_{ji}^{\pm1},L_{ji}^{\pm1}\}.
\]
Since each Nielsen transformation uniquely determines the pair of basis elements it involves we obtain $\cT_p\cap\cT_q=\varnothing$ for $p\ne q$.
If $p\cap q=\varnothing$, the elements of $\cT_p$ and $\cT_q$ are supported on disjoint sets of generators and hence commute.
Finally, we note that for $n\geq2$, the union of the sets $\cT_p$ over all pairs $p\subseteq\{1,\ldots,n\}$ is given by the set consisting of all $R_{ij}^{\pm1},L_{ij}^{\pm1}$ with $1\leq i\ne j\leq n$.
By~\cite[Section~3]{Nitsche26}, these transformations generate $\SAut(F_n)$.
Thus we see that $G_n=\SAut(F_n)$ and that $G$ is generated by the union of all sets $\cT_p$, which completes the proof.
\end{proof}

\begin{lemma}\label{lem:aut-absorption}
The set $\cT$ satisfies the absorption property.
\end{lemma}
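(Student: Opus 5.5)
The plan is to verify \cref{def:absorption} by hand for each of the eight templates, by exhibiting an explicit conjugator built from Nielsen transformations on $x_2,x_3$. All automorphisms involved fix every basis element other than $x_1,x_2,x_3$. Hence it suffices to compute on $\langle x_1,x_2,x_3\rangle$, and to note that any automorphism fixing $x_1$ and all $x_k$ with $k\ge4$ and lying in $\SAut$ of $\langle x_2,x_3\rangle$ belongs to $G(\{2,3\})=\SAut(F(x_2,x_3))$ (by \cref{lem:aut-templates} after relabelling). The sign changes and $(1,2)$ reduce the work. By \eqref{eq:applying-epilon-autfn}, $\varepsilon_1,\varepsilon_2$ fix the index set $\{2,3\}$ as well as the form of the absorption identity, and they permute $\cT$ transitively within each of the two types $\{R_{12}^{\pm1},L_{12}^{\pm1}\}$ and $\{R_{21}^{\pm1},L_{21}^{\pm1}\}$. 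Here $\varepsilon_2,\varepsilon_3$ are applied simultaneously, since $\varepsilon_2\varepsilon_3$ normalizes $G(\{2,3\})$ and maps $t_{1,2},t_{1,3}$ to $(\varepsilon_2 t)_{1,2},(\varepsilon_2t)_{1,3}$. Transporting the identity along these automorphisms therefore reduces the claim to one template of each type, say $t=R_{12}$ and $t=R_{21}$. If this symmetry reduction turns out to be fiddly to justify, I would simply treat the remaining cases by the same explicit computation.

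\emph{Type $t=R_{12}$.} Here $t_{1,2}=R_{12}$ and $t_{1,3}=R_{13}$. The product fixes $x_2,x_3$ and sends $x_1$ to $x_1x_2x_3$ or $x_1x_3x_2$, depending on the composition convention. Take $\gamma$ fixing $x_1,x_2$ with $\gamma(x_3)=x_2x_3$, that is $\gamma=L_{32}$ (respectively $\gamma=R_{32}$ for the other convention). Then $\gamma R_{13}\gamma^{-1}$ fixes $x_2$ and $x_3$, since $R_{13}$ does, and sends $x_1\mapsto\gamma(x_1x_3)=x_1x_2x_3$. This is exactly the product, and $\gamma\in G(\{2,3\})$.

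\emph{Type $t=R_{21}$.} Here $t_{1,2}=R_{21}$ and $t_{1,3}=R_{31}$ commute. Their product $\varphi$ fixes $x_1$ and sends $x_2\mapsto x_2x_1$, $x_3\mapsto x_3x_1$. The key observation is the change of basis $y_2=x_2x_3^{-1}$. It satisfies $\varphi(y_2)=x_2x_1x_1^{-1}x_3^{-1}=y_2$, so in the basis $(x_1,y_2,x_3)$ the automorphism $\varphi$ acts as $R_{31}$. Accordingly put $\gamma=R_{23}^{-1}\in G(\{2,3\})$, so that $\gamma(x_2)=y_2$ and $\gamma(x_3)=x_3$. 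Then $\gamma R_{31}\gamma^{-1}$ fixes $x_1$, sends $y_2\mapsto\gamma(R_{31}(x_2))=y_2$, and sends $x_3\mapsto\gamma(x_3x_1)=x_3x_1$. Hence $\gamma R_{31}\gamma^{-1}=\varphi$, which is the required conjugacy in the direction of \cref{def:absorption} after inverting $\gamma$ if needed.

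The only real obstacle is bookkeeping: keeping the composition convention consistent and making sure the sign-change reduction genuinely preserves the shape $t_{1,2}t_{1,3}\sim t_{1,3}$ with conjugator in $G(\{2,3\})$. As a fallback, the two computations above adapt verbatim to $L$ and to inverses. For inverses one replaces $x_j$ by $x_j^{-1}$ in the multipliers, and for $L$ one uses left instead of right multiplication, with conjugators $R_{32}^{\pm1}$, $L_{32}^{\pm1}$, $L_{23}^{\pm1}$ and so on.
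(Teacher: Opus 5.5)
Your proposal is correct and follows the paper's proof essentially verbatim. It uses the same reduction to $R_{12}$ and $R_{21}$ via $\varepsilon_1$ and the simultaneous sign change $\varepsilon_2\varepsilon_3$, and the same conjugators: your $L_{32}$ and $R_{23}^{-1}$ are the inverses of the paper's $L_{32}^{-1}$ and $R_{23}$, since you write the conjugation in the opposite direction, and your change of basis $y_2=x_2x_3^{-1}$ is just a clean way of seeing the same computation.
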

\begin{proof}
For each of the eight templates $t\in\cT$, we have to find an element $\gamma\in G(\{2,3\})$ that satisfies
\[
\gamma(t_{1,2}t_{1,3})\gamma^{-1}=t_{1,3}.
\]
Let us first reduce the general case to the two templates $R_{12}$ and $R_{21}$.
To this end, we observe from~\eqref{eq:applying-epilon-autfn} that every template in $\cT$ can be obtained from either $R_{12}$ or $R_{21}$ by applying a product of the sign changes $\varepsilon_1$ and $\varepsilon_2$.
To make use of the latter, we show that, if the required conjugator exists for $t$, then it also exists for $\varepsilon_1\cdot t$ and $\varepsilon_2\cdot t$.
To this end we assume that $\gamma\in G(\{2,3\})$ satisfies $\gamma(t_{1,2}t_{1,3})\gamma^{-1}=t_{1,3}$.
We first show that the same element $\gamma$ satisfies the required identity for $\varepsilon_1\cdot t$.
Indeed, by using that $\varepsilon_1$ fixes $\gamma$ we obtain
\[
\begin{aligned}
\gamma\bigl((\varepsilon_1\cdot t)_{1,2}
(\varepsilon_1\cdot t)_{1,3}\bigr)\gamma^{-1}
&=\gamma(\varepsilon_1\cdot t_{1,2})
(\varepsilon_1\cdot t_{1,3})\gamma^{-1}\\
&=(\varepsilon_1\cdot\gamma)
(\varepsilon_1\cdot t_{1,2})
(\varepsilon_1\cdot t_{1,3})
(\varepsilon_1\cdot\gamma)^{-1}\\
&=(\varepsilon_1\cdot\gamma)
\bigl(\varepsilon_1\cdot(t_{1,2}t_{1,3})\bigr)
(\varepsilon_1\cdot\gamma^{-1})\\
&=\varepsilon_1\cdot
\bigl(\gamma(t_{1,2}t_{1,3})\gamma^{-1}\bigr)\\
&=(\varepsilon_1\cdot t)_{1,3}.
\end{aligned}
\]
We next show that $\gamma'=(\varepsilon_2\varepsilon_3)\cdot\gamma$ satisfies the required identity for $\varepsilon_2\cdot t$.
From the construction we directly obtain $\varepsilon_1\cdot t_{1,j} = (\varepsilon_1\cdot t)_{1,j}$.
Since $\varepsilon_3$ fixes $t_{1,2}$ and $\varepsilon_2$ fixes $t_{1,3}$ we moreover have $(\varepsilon_2\varepsilon_3)\cdot t_{1,2}
=(\varepsilon_2\cdot t)_{1,2}$
and
\[
(\varepsilon_2\varepsilon_3)\cdot t_{1,3}
=(\varepsilon_3\varepsilon_2)\cdot t_{1,3}
=\varepsilon_3\cdot t_{1,3}
=\varepsilon_3\cdot\bigl((2,3)\cdot t\bigr)
=(2,3)\cdot(\varepsilon_2\cdot t)
=(\varepsilon_2\cdot t)_{1,3}.
\]
Using these identities and the definition of $\gamma'$, we obtain
\[
\begin{aligned}
\gamma'\bigl((\varepsilon_2\cdot t)_{1,2}
(\varepsilon_2\cdot t)_{1,3}\bigr)(\gamma')^{-1}
&=\gamma'
\bigl((\varepsilon_2\varepsilon_3)\cdot t_{1,2}\bigr)
\bigl((\varepsilon_2\varepsilon_3)\cdot t_{1,3}\bigr)
(\gamma')^{-1}\\
&=\bigl((\varepsilon_2\varepsilon_3)\cdot\gamma\bigr)
\bigl((\varepsilon_2\varepsilon_3)\cdot t_{1,2}\bigr)
\bigl((\varepsilon_2\varepsilon_3)\cdot t_{1,3}\bigr)
\bigl((\varepsilon_2\varepsilon_3)\cdot\gamma\bigr)^{-1}\\
&=(\varepsilon_2\varepsilon_3)\cdot
\bigl(\gamma(t_{1,2}t_{1,3})\gamma^{-1}\bigr)\\
&=(\varepsilon_2\cdot t)_{1,3}.
\end{aligned}
\]
It remains to construct the required conjugators for $R_{12}$ and $R_{21}$.
For $t=R_{12}$ we have $t_{1,3} = (2,3)R_{12} = R_{13}$.
By choosing $\gamma=L_{32}^{-1}$ a direct evaluation gives us
\[
\bigl(\gamma(t_{1,2}t_{1,3})\gamma^{-1}\bigr)(x_1) =x_1x_3 \quad \text{and} \quad \bigl(\gamma(t_{1,2}t_{1,3})\gamma^{-1}\bigr)(x_3)=x_3.
\]
Since all other basis elements are clearly fixed this shows that $\gamma(t_{1,2}t_{1,3})\gamma^{-1}$ coincides with $R_{13} = t_{1,3}$.
For $t=R_{21}$ we have $t_{1,3} = (2,3)R_{21} = R_{31}$.
By choosing $\gamma=R_{23}$ a direct evaluation gives us
\[
\bigl(\gamma(t_{1,2}t_{1,3})\gamma^{-1}\bigr)(x_2)
=x_2 \quad \text{and} \quad \bigl(\gamma(t_{1,2}t_{1,3})\gamma^{-1}\bigr)(x_3)
=x_3x_1.
\]
Again, all other basis elements are fixed, so that $\gamma(t_{1,2}t_{1,3})\gamma^{-1}$ coincides with $R_{31} = t_{1,3}$.
\end{proof}

\begin{lemma}\label{lem:aut-commutators}
The set $\cT$ satisfies the local commutator condition.
\end{lemma}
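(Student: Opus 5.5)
The plan is a direct finite case analysis. I reduce to a few representative pairs $(t,u)$ using the $W_2$-symmetry of $\cT$. Then, for each representative, I choose exponents $\epsilon,\delta$ for which $[t_{1,2}^{\epsilon},u_{1,3}^{\delta}]$ is either trivial or a Nielsen transformation involving only $x_2,x_3$. Such an element lies in $G(\{2,3\})\subseteq G(\{2,3,4\})$ by \cref{lem:aut-templates}.

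First the reduction. Since $\cT$ is closed under inverses and the condition allows free choice of $\epsilon,\delta$, it suffices to treat $t,u\in\{R_{12},L_{12},R_{21},L_{21}\}$. By \eqref{eq:applying-epilon-autfn}, $\varepsilon_1$ swaps $R_{12}$ with $L_{12}^{-1}$ and $R_{21}$ with $R_{21}^{-1}$. The element $\varepsilon_1$ fixes $G(\{2,3,4\})$ and acts compatibly on the pair $(t_{1,2},u_{1,3})$ because it commutes with $(2,3)$. So the set of valid pairs is invariant under simultaneously replacing $t,u$ by $\varepsilon_1\cdot t,\varepsilon_1\cdot u$, up to inverses. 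The same holds for the sign change $\varepsilon_2\varepsilon_3$, exactly as in the proof of \cref{lem:aut-absorption}. Together these roughly halve the cases, leaving a small table indexed by the types of $t_{1,2}\in\{R_{12},L_{12},R_{21},L_{21}\}$ and $u_{1,3}\in\{R_{13},L_{13},R_{31},L_{31}\}$.

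Next the trivial cases, where the two elements simply commute.
\begin{itemize}
\item $t_{1,2}$ and $u_{1,3}$ both move $x_1$, one by right and one by left multiplication. An example is $R_{12}$ with $L_{13}$; these commute since $x_1\mapsto x_3x_1x_2$ in either order.
\item Both move different letters $x_2$ and $x_3$ using only $x_1$. An example is $R_{21}$ with $R_{31}$ or $L_{31}$; these commute since neither moved letter occurs in the other's image.
\end{itemize}
The pairs $t=u^{\pm1}$ are excluded by definition. Note also that $R_{12}$ with $R_{13}$ (and $L_{12}$ with $L_{13}$) only arises when $u=t$.

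The remaining, genuinely non-commuting cases are chains. In these, $t_{1,2}$ moves $x_1$ using $x_2$ while $u_{1,3}$ moves $x_3$ using $x_1$, as in $R_{12}$ versus $R_{31}$ or $L_{31}$; the mirror cases $R_{21}$ versus $R_{13}$ or $L_{13}$ are analogous. Here I will use the Steinberg-type relations from Gersten's presentation of $\SAut(F_n)$: for suitable exponents, $[R_{ij}^{\pm1},R_{jk}^{\pm1}]=R_{ik}^{\pm1}$, together with the analogous relations for $L$ and for mixed $R/L$ pairs. With $(i,j,k)=(3,1,2)$, the commutator of $R_{31}^{\pm1}$ and $R_{12}^{\pm1}$ is $R_{32}^{\pm1}$, which lies in $G(\{2,3\})$. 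Reversing the order of the commutator only inverts it, so membership in $G(\{2,3,4\})$ is unaffected.

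The main obstacle is these chain cases, especially the mixed ones such as $R_{12}$ versus $L_{31}$. For those, a careless choice of signs yields a commutator that still moves $x_1$ or conjugates $x_3$ by a word in $x_1$, and so is not in $G(\{2,3,4\})$. I will settle each representative by evaluating the commutator on $x_1,x_2,x_3$ for all four sign choices and picking the one that fixes $x_1$ and sends $x_3$ to $x_3x_2^{\pm1}$ or $x_2^{\pm1}x_3$. If no sign choice works in some mixed case, my fallback is to aim instead for an element of $G(\{2,3,4\})$ that is a conjugate of $R_{32}^{\pm1}$ or $L_{32}^{\pm1}$ by an element of $G(\{2,3\})$. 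This suffices, since the condition only demands membership in $G(\{2,3,4\})$ and not that the commutator be a single Nielsen move.
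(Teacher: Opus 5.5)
Your proposal is correct and follows essentially the paper's proof. You use the same split according to whether $t_{1,2}$ and $u_{1,3}$ move $x_1$, get commutation when both or neither do, and settle the chain cases with an explicit identity such as $[R_{12},R_{31}^{-1}]=R_{32}^{-1}$ after a sign-change reduction. The only difference is the choice of symmetries: the paper uses $(2,3)$ together with $\varepsilon_2$ and $\varepsilon_3$ separately, while you use simultaneous $\varepsilon_1$ and $\varepsilon_2\varepsilon_3$.

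Your worry about the mixed chain cases, and the fallback, are unnecessary. The symmetry $\varepsilon_2\varepsilon_3$ you already invoke carries the pure case to the mixed one: applying it to $[R_{12},R_{31}^{-1}]=R_{32}^{-1}$ gives $[R_{12}^{-1},L_{31}]=L_{32}^{-1}\in G(\{2,3\})$.
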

\begin{proof}
Fix $t,u\in\cT$ and suppose that $u\ne t^{\pm1}$.
We have to find $\epsilon,\delta\in\{\pm1\}$ with
\[
[t_{1,2}^{\epsilon},u_{1,3}^{\delta}]
\in G(\{2,3,4\}).
\]
To this end, we first note that $[R_{ij},L_{ik}]=1$ for $j,k\ne i$, which readily follows from the definition of Nielsen transformations.

Suppose that $t_{1,2}$ and $u_{1,3}$ both change $x_1$.
Since $u\ne t^{\pm1}$, one multiplies $x_1$ on the right and the other on the left.
By the preceding observation, these elements commute.

Suppose next that both $t_{1,2}$ and $u_{1,3}$ fix $x_1$.
Then $t_{1,2}\in\{R_{21}^{\pm1},L_{21}^{\pm1}\}$ and $u_{1,3}\in\{R_{31}^{\pm1},L_{31}^{\pm1}\}$.
In view of this, we see that the images of $x_2$ and $x_3$ under $t_{1,2}$ and $u_{1,3}$ satisfy
\[
\begin{aligned}
t_{1,2}(x_2)&\in
\{x_2x_1,x_2x_1^{-1},x_1x_2,x_1^{-1}x_2\},\\
u_{1,3}(x_3)&\in
\{x_3x_1,x_3x_1^{-1},x_1x_3,x_1^{-1}x_3\},
\end{aligned}
\]
while these transformations fix all other basis elements.
In particular we see that $u_{1,3}$ fixes $t_{1,2}(x_2)$ and that $t_{1,2}$ fixes $u_{1,3}(x_3)$.
Since moreover $u_{1,3}t_{1,2}$ and $t_{1,2}u_{1,3}$ fix $x_k$ for $k\notin\{2,3\}$ we deduce that
\[
(t_{1,2}u_{1,3})(x_k)=(u_{1,3}t_{1,2})(x_k)
\]
for all $k$ and hence $[t_{1,2},u_{1,3}]=1$.

It remains to consider the case where exactly one of $t_{1,2}$ and $u_{1,3}$ changes $x_1$.
To reduce this to the case where $t_{1,2}$ changes $x_1$ we observe for $\epsilon,\delta\in\{\pm1\}$ that
\[
(2,3)\cdot[t_{1,2}^{\epsilon},u_{1,3}^{\delta}]
=[t_{1,3}^{\epsilon},u_{1,2}^{\delta}]
=[u_{1,2}^{\delta},t_{1,3}^{\epsilon}]^{-1}.
\]
Since $(2,3)\cdot G(\{2,3,4\})=G(\{2,3,4\})$, it follows that $[t_{1,2}^{\epsilon},u_{1,3}^{\delta}]$ lies in $G(\{2,3,4\})$ if and only if so does $[u_{1,2}^{\delta},t_{1,3}^{\epsilon}]$.
Without loss of generality we can therefore assume that $t$ lies in $\{R_{12}^{\pm1},L_{12}^{\pm1}\}$ and $u$ lies in $\{R_{21}^{\pm1},L_{21}^{\pm1}\}$.

The sign changes $\varepsilon_2,\varepsilon_3$ preserve $G(\{2,3,4\})$, so applying either of them to a commutator preserves the required subgroup inclusion.
Since $\varepsilon_2$ inverts $t_{1,2}$ and fixes $u_{1,3}$ we can assume that $t \in \{R_{12},L_{12}\}$.
Similarly we can assume that $u_{1,3}\in\{R_{31},R_{31}^{-1}\}$ since $\varepsilon_3$ fixes $t_{1,2}$ and exchanges $R_{31}$ with $L_{31}^{-1}$, as well as their inverses.
If $t=R_{12}$, we choose $\delta$ so that $u_{1,3}^{\delta}=R_{31}^{-1}$.
Then a direct calculation shows
\[
[R_{12},R_{31}^{-1}]=R_{32}^{-1} \in G(\{2,3,4\}).
\]
Similarly, if $t=L_{12}$ we choose $\delta$ so that $u_{1,3}^{\delta}=R_{31}$ and obtain
\[
[L_{12},R_{31}]=R_{32} \in G(\{2,3,4\}),
\]
which completes the proof.
\end{proof}

Recall that property~$(T)$ for $\Aut(F_n)$ in every rank $n\geq4$ was established through the work of Kaluba, Nowak, and Ozawa \cite{KalubaNowakOzawa19}, Kaluba, Kielak, and Nowak \cite{KalubaKielakNowak21}, and Nitsche \cite{Nitsche22}.
More recently, Nitsche \cite{Nitsche26} gave a computer-free proof for all sufficiently large $n$.
The following corollary recovers this asymptotic result from our general criterion.

\begin{corollary}\label{cor:aut}
The groups $\Aut(F_n)$ have property~$(T)$ for all sufficiently large $n$.
\end{corollary}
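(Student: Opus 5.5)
By \cref{lem:aut-templates,lem:aut-absorption,lem:aut-commutators}, the set $\cT$ of Nielsen transformations is a set of generator templates for the $W$-group $G=\bigcup_n\SAut(F_n)$. It satisfies the absorption property and the local commutator condition. Hence \cref{thm:main} applies, and since $G_n=\SAut(F_n)$ by \cref{lem:aut-templates}, there is $n_0$ such that $\SAut(F_n)$ has property~$(T)$ for all $n\geq n_0$.

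It remains to pass from $\SAut(F_n)$ to $\Aut(F_n)$. By definition, $\SAut(F_n)$ is the kernel of the homomorphism $\Aut(F_n)\to\{\pm1\}$ obtained by composing abelianization with the determinant. It is therefore a normal subgroup of index at most two, and in fact exactly two, since the sign change $\varepsilon_1$ has determinant $-1$. Property~$(T)$ passes to overgroups of finite index: a group $\Gamma$ has $(T)$ as soon as a finite-index subgroup has $(T)$. This is standard and can be seen via fixed points of affine isometric actions. Given an affine isometric action of $\Aut(F_n)$ on a real Hilbert space, the restriction to $\SAut(F_n)$ has a nonempty closed affine fixed set. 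This fixed set is invariant under $\Aut(F_n)$ because $\SAut(F_n)$ is normal, and the finite quotient $C_2$ acts on it isometrically. Taking the barycenter of a $C_2$-orbit in this fixed set gives a global fixed point. By the Delorme--Guichardet theorem, this yields property~$(T)$ for $\Aut(F_n)$ for all $n\geq n_0$.

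There is no real obstacle; all the work lies in the three lemmas and \cref{thm:main}. The only point to check is that $G_n$ in \cref{thm:main} is really $\SAut(F_n)$ with the fixed basis, which is exactly the second assertion of \cref{lem:aut-templates}.
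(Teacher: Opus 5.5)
Your proposal is correct and follows the paper's proof: apply \cref{thm:main} via \cref{lem:aut-templates,lem:aut-absorption,lem:aut-commutators} to get property~$(T)$ for $G_n=\SAut(F_n)$ for large $n$, then pass to $\Aut(F_n)$ because $\SAut(F_n)$ has index two and property~$(T)$ passes to finite-index overgroups. Your added fixed-point argument for that last step, using the normality of $\SAut(F_n)$ and a barycenter of a $C_2$-orbit together with Delorme--Guichardet, is a valid justification of the fact the paper simply cites as standard.
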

\begin{proof}
By \cref{lem:aut-templates,lem:aut-absorption,lem:aut-commutators}, all hypotheses of \cref{thm:main} are satisfied.
Since $G_n=\SAut(F_n)$, the theorem implies that $\SAut(F_n)$ has property~$(T)$ for all sufficiently large $n$.
The conclusion for $\Aut(F_n)$ follows because $\SAut(F_n)$ has index two in $\Aut(F_n)$ and property~$(T)$ passes to finite-index overgroups.
\end{proof}

\subsection{Elementary groups}\label{sec:elementary}
We next apply \cref{thm:main} to elementary groups over finitely generated rings.
To this end, we fix a non-trivial finitely generated associative unital ring $R$.
Given $n\geq2$ and $1 \leq i, j \leq n$, we write $E_{ij} \in R^{n \times n}$ to denote the $n\times n$-matrix whose entry in position $(i,j)$ is $1$ and all other entries are zero.
For $i\ne j$ and $r\in R$, the corresponding elementary matrix is given by $e_{ij}(r)=I_n+rE_{ij}$.
The elementary group $\EL_n(R)$ is defined as the subgroup
\[
\EL_n(R)
=
\langle e_{ij}(r)\mid 1\leq i\ne j\leq n,\ r\in R\rangle
\]
of $\GL_n(R)$ that is generated by all elementary matrices.
By embedding $\EL_n(R)$ in the upper left corner of $\EL_{n+1}(R)$ we can define the direct union
\[
G=\bigcup_{n\geq2}\EL_n(R),
\]
which we think of as a group acting on $R^{(\N)}=\bigoplus_{i\in\N}R$ by matrix multiplication in the canonical way.
Let $(b_i)_{i\in\N}$ denote the standard basis of $R^{(\N)}$.
The group $W=\left(\bigoplus_{i\in\N}C_2\right)\rtimes\Sym_{\mathrm{fin}}(\N)$ acts on $R^{(\N)}$ by
\[
\pi(b_i)=b_{\pi(i)},
\qquad
\varepsilon_k(b_i)=
\begin{cases}
-b_i, & i=k,\\
b_i, & i\ne k,
\end{cases}
\]
for $\pi\in\Sym_{\mathrm{fin}}(\N)$ and $k\in\N$.
By conjugating elementary matrices with these automorphisms we obtain
\begin{equation}\label{eq:el-conjugate}
\pi e_{ij}(r)\pi^{-1} = e_{\pi(i)\pi(j)}(r)
\end{equation}
and
\[
\varepsilon_k e_{ij}(r)\varepsilon_k^{-1}
=
\begin{cases}
e_{ij}(-r), & k\in\{i,j\},\\
e_{ij}(r), & k\notin\{i,j\}.
\end{cases}
\]
In particular we see that $G$ is normalized by $W$.
Thus conjugation by $W$ endows $G$ with the structure of a $W$-group.
To specify our generator templates for this $W$-group, we choose a finite generating set $\Sigma\subset R\setminus\{0\}$ of $R$ that contains $1$ and is closed under taking additive inverses.
Our set of generator templates for $G$ will be
\[
\cT=\{e_{12}(r),e_{21}(r)\mid r\in\Sigma\}.
\]
From~\eqref{eq:el-conjugate} we deduce that for every two-element set $p=\{i,j\}\subset\N$ we have
\begin{equation}\label{eq:tp-for-el}
\cT_p=\{e_{ij}(r),e_{ji}(r)\mid r\in\Sigma\}.
\end{equation}
Our choice of $\cT$ is motivated by the following lemma.

\begin{lemma}\label{lem:elcompiler}
For every finite set $I\subset\N$ with $|I|\geq3$, the group $G(I)$
is the copy of $\EL_{|I|}(R)$ embedded in the rows and columns indexed
by $I$. In particular, $G_n=\EL_n(R)$ for $n\geq3$.
\end{lemma}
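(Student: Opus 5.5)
By definition, $G(I)=\langle e_{ij}(r),e_{ji}(r) : \{i,j\}\subseteq I,\ r\in\Sigma\rangle$. So $G(I)$ is contained in the copy of $\EL_{|I|}(R)$ sitting in the rows and columns indexed by $I$. By permutation symmetry \eqref{eq:el-conjugate}, it suffices to treat $I=\{1,\dots,n\}$ with $n\geq3$. It then remains to show that $G(I)$ contains every elementary matrix $e_{ij}(r)$ with $i\ne j$ in $I$ and $r\in R$.

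The key tool is the Steinberg commutator relation: for pairwise distinct $i,j,k$,
\[
  [e_{ij}(r),e_{jk}(s)]=e_{ik}(rs).
\]
Together with $e_{ij}(r)e_{ij}(s)=e_{ij}(r+s)$ and $e_{ij}(r)^{-1}=e_{ij}(-r)$, it gives the following. For fixed distinct $i,j$, the set $R_{ij}=\{r\in R : e_{ij}(r)\in G(I)\}$ is an additive subgroup of $R$. Since $\Sigma\subseteq R_{ij}$, and $\Sigma$ is closed under negation, each $R_{ij}$ contains the additive subgroup generated by $\Sigma$. The commutator relation then yields $R_{ij}R_{jk}\subseteq R_{ik}$ for distinct $i,j,k\in I$, and this is exactly where $|I|\geq3$ is needed.

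I claim all the $R_{ij}$ coincide and form a subring containing $\Sigma$, hence equal $R$, since $\Sigma$ generates $R$ as a ring. First, $1\in\Sigma\subseteq R_{jk}$ gives $R_{ij}=R_{ij}\cdot1\subseteq R_{ik}$. Likewise, $1\in R_{ij}$ gives $R_{jk}\subseteq R_{ik}$. Running over all triples of distinct indices, this gives $R_{ij}\subseteq R_{ik}$ and $R_{jk}\subseteq R_{ik}$ whenever $i,j,k$ are distinct. Chaining these inclusions (for example $R_{ij}\subseteq R_{ik}\subseteq R_{jk}$? — more carefully, $R_{ij}\subseteq R_{ik}$ and $R_{ki}\subseteq R_{ji}$, and so on), one sees that all $R_{ij}$ equal a common additive subgroup $R_0$. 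To be safe I would check this explicitly: from $i\to k$ by varying the second index, and the first index by the dual inclusion, one gets $R_{ij}\subseteq R_{ik}\subseteq R_{lk}$ for distinct targets, which connects any two ordered pairs when $n\geq3$. Then $R_0R_0\subseteq R_0$, and $R_0$ contains $1$ and $\Sigma$. So $R_0$ is a subring containing the ring generators, and therefore $R_0=R$.

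Hence every $e_{ij}(r)$ with $i\neq j$ in $I$ lies in $G(I)$, so $G(I)$ is the copy of $\EL_{|I|}(R)$. Taking $I=\{1,\dots,n\}$ gives $G_n=\EL_n(R)$. The only step requiring care is the index bookkeeping showing that all $R_{ij}$ coincide. This uses three distinct indices, and it is why the lemma fails for $|I|=2$: there $G(I)$ is generated only by the $e_{12}(r),e_{21}(r)$ with $r\in\Sigma$, with no commutator relation available to reach products.
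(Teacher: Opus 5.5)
Your proof is correct. It has the same skeleton as the paper's proof, which also reduces to the fact that the $e_{ij}(r)$ with $r\in\Sigma$ generate $\EL_n(R)$ for $n\geq3$ and then transports this to an arbitrary $I$ by a permutation. The difference is that the paper simply cites that generation statement from Ershov--Jaikin-Zapirain \cite{EJ}, whereas you prove it from the Steinberg relation $[e_{ij}(r),e_{jk}(s)]=e_{ik}(rs)$. Your version buys self-containedness and shows exactly where $1\in\Sigma$ and $|I|\geq3$ enter; the paper's buys brevity.

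Two small points on the write-up. First, the index chase can be stated cleanly. Your inclusions hold for every ordered triple of distinct indices, so they are equalities: $R_{ij}=R_{ik}$ and $R_{jk}=R_{ik}$. Any two ordered pairs are then linked, for instance $R_{ij}=R_{ik}=R_{jk}=R_{ji}$ through a third index $k$. This covers the case your displayed chain $R_{ij}\subseteq R_{ik}\subseteq R_{lk}$ misses, namely a target pair whose second index is $i$. Second, your closing remark overstates things: for $|I|=2$ the argument breaks down, but the conclusion need not fail. For $R=\Z$ and $\Sigma=\{\pm1\}$, for example, one has $\langle e_{12}(1),e_{21}(1)\rangle=\SL_2(\Z)=\EL_2(\Z)$.
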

\begin{proof}
Since $\Sigma$ generates $R$ as a ring and contains $1$, the elementary matrices $e_{ij}(r)$ with $r\in\Sigma$ and $1\leq i\ne j\leq n$ generate $\EL_n(R)$ for every $n\geq3$, see \cite[Section~6.1, equation~(6.1)]{EJ}.
By combining this with~\eqref{eq:tp-for-el} we see that
\[
G_n
=\langle e_{ij}(r)\mid 1\leq i\ne j\leq n,\ r\in\Sigma\rangle
=\EL_n(R).
\]
Now let $I\subset\N$ be finite with $|I|=n\geq3$ and choose $\pi\in\Sym_{\mathrm{fin}}(\N)$ with $\pi(\{1,\ldots,n\})=I$.
Then it follows from~\eqref{eq:el-conjugate} that
the action of $\pi$ sends $G_n$ onto $G(I)$, which is the standard copy of $\EL_n(R)$ in the rows and columns in $I$.
\end{proof}

\begin{lemma}\label{lem:el-templates}
The set $\cT$ is a set of generator templates for the $W$-group $G$.
\end{lemma}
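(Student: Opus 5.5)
We check the four conditions of \cref{def:coordinates} directly from the formulas \eqref{eq:el-conjugate} and the sign-change rule, in the same way as in \cref{lem:aut-templates}. The set $\cT$ is finite and non-empty because $\Sigma$ is. It is closed under inverses because $e_{ij}(r)^{-1}=e_{ij}(-r)$ and $\Sigma=-\Sigma$.

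For condition~(i), take $(\epsilon,\sigma)\in W$ supported outside $\{1,2\}$. The permutation $\sigma$ fixes $1$ and $2$, so by \eqref{eq:el-conjugate} it fixes $e_{12}(r)$ and $e_{21}(r)$. The sign changes $\varepsilon_k$ with $k\notin\{1,2\}$ also fix these elements by the displayed conjugation rule.

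For condition~(ii), the transposition $(1,2)$ swaps $e_{12}(r)$ and $e_{21}(r)$. The sign changes $\varepsilon_1,\varepsilon_2$ send $e_{ij}(r)$ to $e_{ij}(-r)=e_{ij}(r)^{-1}$ for $\{i,j\}=\{1,2\}$, and this lies in $\cT$ because $\Sigma=-\Sigma$. Hence $\cT$ is $W_2$-invariant. Moreover, $\varepsilon_1$ inverts every template, which gives the required sign condition.

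For condition~(iii), \eqref{eq:tp-for-el} identifies $\cT_p$. We then argue as follows.
\begin{itemize}
\item An elementary matrix $e_{ij}(r)$ with $r\ne0$ determines its position $(i,j)$. Since $0\notin\Sigma$, this gives $\cT_p\cap\cT_q=\emptyset$ for $p\ne q$. This is the one place where the assumptions that $R$ is non-trivial and $0\notin\Sigma$ are used.
\item For disjoint pairs $p=\{i,j\}$ and $q=\{k,l\}$, the relation $E_{ij}E_{kl}=0=E_{kl}E_{ij}$ holds because $j\ne k$ and $l\ne i$. Hence $e_{ij}(r)e_{kl}(s)=I+rE_{ij}+sE_{kl}=e_{kl}(s)e_{ij}(r)$, so elements of $\cT_p$ and $\cT_q$ commute.
\end{itemize}

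For condition~(iv), \cref{lem:elcompiler} gives $G_n=\EL_n(R)$ for $n\ge3$. Since $G$ is the union of these groups, the sets $\cT_p$ generate $G$.

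None of these steps is a real obstacle. The only point needing care is the disjointness $\cT_p\cap\cT_q=\emptyset$, which holds precisely because $0\notin\Sigma$ and $R\ne0$.
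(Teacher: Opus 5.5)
Your proposal is correct and follows the paper's proof essentially step by step. The ingredients match: inverse-closure via $e_{ij}(r)^{-1}=e_{ij}(-r)$ and $\Sigma=-\Sigma$, $W_2$-invariance and the sign condition from the conjugation formulas, disjointness of the $\cT_p$ from $0\notin\Sigma$, commutation for disjoint pairs (your computation $E_{ij}E_{kl}=0=E_{kl}E_{ij}$ just fills in the step the paper calls clear), and generation from \cref{lem:elcompiler}.
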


\begin{proof}
Since $\Sigma$ is closed under taking additive inverses we deduce from $e_{ij}(r)^{-1}=e_{ij}(-r)$ that $\cT$ is closed under taking inverses.
The transposition $(1,2)$ swaps $e_{12}(r)$ and $e_{21}(r)$, while the sign changes $\varepsilon_1,\varepsilon_2$ send every element of $\cT$ to its inverse.
Thus $\cT$ is invariant under $W_2$.
From~\eqref{eq:el-conjugate} we deduce that every element of $W$ supported outside $\{1,2\}$ fixes $\cT$ pointwise.
Since $0\notin\Sigma$, each element of $\cT_p$ has exactly one non-zero off-diagonal entry, whose row and column indices determine $p$.
Thus $\cT_p\cap\cT_q=\varnothing$ for $p\ne q$.
For disjoint pairs $\{i,j\}$ and $\{k,\ell\}$, we clearly have $[e_{ij}(r),e_{k\ell}(s)]=1$ so that the elements of $\cT_p$ commute with those of $\cT_q$ whenever $p\cap q=\varnothing$.
Finally, \cref{lem:elcompiler} gives us
\[
\left\langle\bigcup_{\substack{p\subset\N\\|p|=2}}\cT_p\right\rangle
=\bigcup_{n\geq3}G_n
=\bigcup_{n\geq3}\EL_n(R)
=G,
\]
which completes the proof.
\end{proof}

\begin{lemma}\label{lem:el-absorption}
The set $\cT$ satisfies the absorption property.
\end{lemma}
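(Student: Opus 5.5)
Fix $r\in\Sigma$. There are two templates to handle, $t=e_{12}(r)$ and $t=e_{21}(r)$. For each we exhibit an explicit conjugator in $G(\{2,3\})$, which is generated by $e_{23}(s),e_{32}(s)$ with $s\in\Sigma$ and contains all $e_{23}(s),e_{32}(s)$ with $s\in R$. Note first that $G(\{2,3\})$ is generated by elementary matrices with $r\in\Sigma$; since $\Sigma$ contains $1$ and $-1$, it contains $e_{23}(\pm1)$ and $e_{32}(\pm1)$. Those are the only elements we will need.

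\emph{Case $t=e_{12}(r)$.} Here $t_{1,2}t_{1,3}=e_{12}(r)e_{13}(r)$, and this matrix adds $r$ times rows/columns in row $1$, columns $2,3$. We want $\gamma$ with $\gamma e_{12}(r)e_{13}(r)\gamma^{-1}=e_{13}(r)$. By the Steinberg relation $[e_{1j}(a),e_{jk}(b)]=e_{1k}(ab)$, conjugating $e_{12}(r)$ by $e_{23}(b)$ produces a factor $e_{13}(\pm rb)$. Concretely, $e_{23}(-1)\,e_{12}(r)\,e_{23}(1)=e_{12}(r)e_{13}(-r)$ up to sign conventions. I will verify by direct matrix computation in the $3\times3$ block that, for $\gamma=e_{32}(\pm1)$ or $e_{23}(\pm1)$ with the right sign, $\gamma^{-1}e_{13}(r)\gamma=e_{12}(r)e_{13}(r)$.

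Indeed, a row vector $(0,r,r)$ in row $1$ transforms under right multiplication by the inverse of $e_{32}(1)$. Computing $e_{32}(-1)\,e_{13}(r)\,e_{32}(1)$ gives $I+e_1\cdot(0,0,r)$ multiplied appropriately, and the row $(0,0,r)\,e_{32}(1)=(0,r,r)$. Hence
\[
e_{32}(-1)\,e_{13}(r)\,e_{32}(1)=e_{12}(r)e_{13}(r),
\]
and $\gamma=e_{32}(1)$ works. I will double-check the left-multiplication factor: $e_{32}(-1)$ acts only on row $3$, which is trivial in $E_{13}$'s row $1$ part, so it contributes nothing extra.

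\emph{Case $t=e_{21}(r)$.} Here $t_{1,3}=e_{31}(r)$, and $e_{21}(r)e_{31}(r)$ is the identity plus the column $r(e_2+e_3)$ in column $1$. Conjugation $\gamma\,(I+vE^{\ast}_1)\gamma^{-1}=I+(\gamma v)(\text{row }1\text{ of }\gamma^{-1})$, and $\gamma\in G(\{2,3\})$ fixes row/column $1$. So we need $\gamma(e_2+e_3)r=e_3r$, i.e.\ $\gamma e_2=0\cdot e_2+\ldots$; the choice $\gamma=e_{23}(-1)$ sends $e_3\mapsto e_3-e_2$, which is not what we want. Instead we need $\gamma(e_2+e_3)=e_3$, and $\gamma=e_{32}(1)^{-1}$-type maps send $e_2\mapsto e_2+e_3$. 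I will pick $\gamma=e_{23}(\pm1)$ or $e_{32}(\pm1)$ by checking these four options. The entries here are in $\pm1$, so left scalar multiplication by $r$ causes no noncommutativity issues.

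The only real obstacle is bookkeeping of signs and left/right conventions in the noncommutative ring. Since all conjugators have entries $\pm1$, which are central, this is safe. Finally I will note that both cases lie in $G(\{2,3\})$ because $\pm1\in\Sigma$.
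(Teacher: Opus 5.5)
Your first case is correct and is exactly the paper's identity, $e_{32}(1)\,e_{12}(r)e_{13}(r)\,e_{32}(-1)=e_{13}(r)$, so $\gamma=e_{32}(1)$ works.

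\textbf{The gap is in the second case.} It is never completed, and the one concrete judgement you make there is wrong. Your reduction is fine: since $\gamma\in G(\{2,3\})$ fixes index $1$, you need $\gamma(e_2+e_3)=e_3$.

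You then reject $\gamma=e_{23}(-1)$. But, as you say yourself, it fixes $e_2$ and sends $e_3\mapsto e_3-e_2$. Hence it sends $e_2+e_3\mapsto e_3$, so it is precisely the required conjugator:
\[
e_{23}(-1)\bigl(e_{21}(r)e_{31}(r)\bigr)e_{23}(1)=e_{31}(r).
\]
This is the identity the paper uses. It can be checked directly from $E_{21}E_{31}=0$, $E_{23}E_{21}=0$, $E_{23}E_{31}=E_{21}$ and $E_{31}E_{23}=0$.

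Among your four options, $e_{32}(\pm1)$ can never work: it fixes $e_3$ and sends $e_2\mapsto e_2\pm e_3$, so $e_2+e_3\mapsto e_2+(1\pm1)e_3$. Likewise $e_{23}(1)$ sends $e_2+e_3\mapsto 2e_2+e_3$. So the choice you discarded is the one that works, and your deferred ``check the four options'' succeeds only once you undo that rejection. Your hint about ``$e_{32}(1)^{-1}$-type maps'' also mixes up the indices. What has to send $e_3\mapsto e_2+e_3$ is $\gamma^{-1}=e_{23}(1)$. With this correction, your proof coincides with the paper's.

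A smaller point: your aside that $G(\{2,3\})$ contains $e_{23}(s)$ for every $s\in R$ is unjustified. With only the two indices $2,3$ available you get $e_{23}(s)$ for $s$ in the additive subgroup generated by $\Sigma$. Reaching ring products would need the commutator relation through a third index. This is harmless, because you only use $e_{23}(\pm1)$ and $e_{32}(\pm1)$, which lie in $G(\{2,3\})$ since $\pm1\in\Sigma$.
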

\begin{proof}
For every $t\in\cT$, we must find an element $\gamma\in G(\{2,3\})$ such that
\[
\gamma(t_{1,2}t_{1,3})\gamma^{-1}=t_{1,3}.
\]
For $r\in\Sigma$, matrix multiplication gives us
\begin{align*}
e_{32}(1)\bigl(e_{12}(r)e_{13}(r)\bigr)e_{32}(-1)
&=e_{13}(r),\\
e_{23}(-1)\bigl(e_{21}(r)e_{31}(r)\bigr)e_{23}(1)
&=e_{31}(r).
\end{align*}
Since $1,-1\in\Sigma$, the elements $e_{32}(1)$ and $e_{23}(-1)$ lie in $G(\{2,3\})$.
Thus we have established the absorption property for $e_{12}(r)$ and $e_{21}(r)$, and hence for $\cT$.
\end{proof}

\begin{lemma}\label{lem:el-commutators}
The set $\cT$ satisfies the local commutator condition.
\end{lemma}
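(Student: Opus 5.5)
We have to show that for all $t,u\in\cT$ with $u\neq t^{\pm1}$ there are signs $\epsilon,\delta$ with $[t_{1,2}^{\epsilon},u_{1,3}^{\delta}]\in G(\{2,3,4\})$. Here $t_{1,2}=t$ and $u_{1,3}=(2,3)\cdot u$. Because $\Sigma$ is closed under additive inverses, the inverse of a template is again a template with the same shape. So it is enough to find one pair of signs, and we may simply take $\epsilon=\delta=1$. The proof then splits into four cases according to whether $t$ is of the form $e_{12}(r)$ or $e_{21}(r)$, and whether $u$ is of the form $e_{12}(s)$ or $e_{21}(s)$. Throughout we use the Steinberg commutator relations in $\EL_n(R)$:
\[
[e_{ij}(r),e_{jk}(s)]=e_{ik}(rs)\ \text{for distinct } i,j,k,
\qquad
[e_{ij}(r),e_{kl}(s)]=1\ \text{if } j\neq k,\ i\neq l .
\]

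\textbf{Case $t=e_{12}(r)$, $u=e_{12}(s)$.} Then $u_{1,3}=e_{13}(s)$. The matrices $e_{12}(r)$ and $e_{13}(s)$ share the row index $1$, so they commute. The commutator is trivial, which lies in $G(\{2,3,4\})$.

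\textbf{Case $t=e_{21}(r)$, $u=e_{21}(s)$.} Then $u_{1,3}=e_{31}(s)$. The matrices $e_{21}(r)$ and $e_{31}(s)$ share the column index $1$, so again they commute and the commutator is trivial.

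\textbf{Case $t=e_{12}(r)$, $u=e_{21}(s)$.} Then $u_{1,3}=e_{31}(s)$, and the Steinberg relation gives
\[
[e_{12}(r),e_{31}(s)]=e_{32}(-sr).
\]
This is an elementary matrix supported on the indices $\{2,3\}$. The delicate point is that $-sr$ need not lie in $\Sigma$, so $e_{32}(-sr)$ is not itself a template. However, \cref{lem:elcompiler} applied to $I=\{2,3,4\}$ shows that $G(\{2,3,4\})$ is the full copy of $\EL_3(R)$ on these rows and columns. That copy contains $e_{32}(x)$ for every $x\in R$, so the commutator lies in $G(\{2,3,4\})$. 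This is exactly where the auxiliary index $4$ is needed: it makes $|I|\ge 3$, so that \cref{lem:elcompiler} applies.

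\textbf{Case $t=e_{21}(r)$, $u=e_{12}(s)$.} Then $u_{1,3}=e_{13}(s)$, and similarly
\[
[e_{21}(r),e_{13}(s)]=e_{23}(rs)\in G(\{2,3,4\}).
\]

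The only real content of the proof is the correct bookkeeping of the signs in the Steinberg relations, and the observation that arbitrary ring entries are available in $G(\{2,3,4\})$ by \cref{lem:elcompiler}. No obstacle beyond this is expected. Combined with \cref{lem:el-templates,lem:el-absorption}, this completes the verification of the hypotheses of \cref{thm:main} for $\EL_n(R)$.
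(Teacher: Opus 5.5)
Your proof is correct and uses the same four-case computation as the paper, with the same identities $[e_{12}(r),e_{31}(s)]=e_{32}(-sr)$ and $[e_{21}(r),e_{13}(s)]=e_{23}(rs)$. The only difference is how you show these lie in $G(\{2,3,4\})$. You invoke \cref{lem:elcompiler} for $I=\{2,3,4\}$, and hence indirectly the generation result cited there. The paper instead uses the auxiliary index directly: it writes $e_{32}(-sr)=[e_{34}(-s),e_{42}(r)]$ and $e_{23}(rs)=[e_{24}(r),e_{43}(s)]$ as commutators of templates in $\cT_{\{2,4\}}\cup\cT_{\{3,4\}}$, using $-s\in\Sigma$. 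This is self-contained, but both arguments are valid.
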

\begin{proof}
We verify the local commutator condition by showing that
\begin{equation}\label{eq:el-loc-com}
[t_{1,2},u_{1,3}]\in G(\{2,3,4\})
\end{equation}
for all $t,u\in\cT$.
The verification of \eqref{eq:el-loc-com} reduces to the following four cases, where $r,s\in\Sigma$ are arbitrary and the identities follow by direct matrix multiplication.
\begin{align*}
[e_{12}(r),e_{13}(s)]&=1,\\
[e_{21}(r),e_{31}(s)]&=1,\\
[e_{12}(r),e_{31}(s)]&=e_{32}(-sr)
=[e_{34}(-s),e_{42}(r)],\\
[e_{21}(r),e_{13}(s)]&=e_{23}(rs)
=[e_{24}(r),e_{43}(s)].
\end{align*}
Every elementary matrix in the rightmost commutators has both indices in $\{2,3,4\}$.
In particular this tells us that the commutators belong to $G(\{2,3,4\})$, proving the local commutator condition.
\end{proof}

Recall that Ershov and Jaikin-Zapirain \cite{EJ} proved that $\EL_n(R)$ has property~$(T)$ for every finitely generated associative unital ring $R$ and every $n\geq3$.
The following corollary recovers their result for all sufficiently large $n$.

\begin{corollary}\label{cor:elementary}
For every finitely generated associative ring $R$ with identity,
$\EL_n(R)$ has property~$(T)$ for all sufficiently large $n$.
\end{corollary}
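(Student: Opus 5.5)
The plan is to follow the pattern of \cref{cor:aut} and apply \cref{thm:main} to the $W$-group $G=\bigcup_{n\geq2}\EL_n(R)$ with the templates $\cT=\{e_{12}(r),e_{21}(r)\mid r\in\Sigma\}$. First, we dispose of the trivial ring. If $R=0$, then $\EL_n(R)$ is trivial, so it certainly has property~$(T)$. Hence we may assume that $R$ is non-trivial, as in the setup of this subsection, so that a finite generating set $\Sigma\subset R\setminus\{0\}$ containing $\pm1$ and closed under additive inverses exists.

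By \cref{lem:el-templates}, the set $\cT$ is a set of generator templates of $G$. By \cref{lem:el-absorption}, it satisfies the absorption property, and by \cref{lem:el-commutators}, it satisfies the local commutator condition. Therefore all hypotheses of \cref{thm:main} hold, and $G_n$ has property~$(T)$ for all sufficiently large $n$.

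Finally, \cref{lem:elcompiler} identifies $G_n$ with $\EL_n(R)$ for all $n\geq3$, which gives the conclusion. This identification relies on the fact that, for $n\geq3$, the elementary matrices with entries in a ring generating set already generate $\EL_n(R)$; this is the only point where $n\geq3$ is needed. No step is a genuine obstacle, since all the work is contained in the cited lemmas. The only care required is the degenerate ring case and checking that $\Sigma$ can be chosen with the stated closure properties. This holds because adjoining $\pm1$ and the additive inverses to a finite generating set keeps it finite and still generating, and $0$ can be removed without affecting generation.
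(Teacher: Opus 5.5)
Your proposal is correct and is essentially the paper's proof: you feed \cref{lem:el-templates,lem:el-absorption,lem:el-commutators} into \cref{thm:main} and then identify $G_n$ with $\EL_n(R)$ for $n\geq3$ via \cref{lem:elcompiler}. Your remarks on the zero ring and on choosing $\Sigma\subset R\setminus\{0\}$ closed under additive inverses only make explicit what the paper assumes when it fixes a non-trivial ring at the start of the subsection.
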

\begin{proof}
We apply our Kazhdan criterion \cref{thm:main}.
From \cref{lem:el-templates} we know that $\cT$ is a set of generator templates for the $W$-group $G$.
By \cref{lem:el-templates,lem:el-absorption,lem:el-commutators}, all hypotheses of \cref{thm:main} are satisfied, from which we conclude that $G_n = \EL_n(R)$ has property~$(T)$ for all sufficiently large $n$.
\end{proof}

\appendix

\section{Inapproximability of higher Brin--Thompson groups \\ (by Francesco Fournier-Facio)}

The past two months have seen the resolution of the three main problems in the subject of metric approximations of groups: the existence of non-sofic \cite{OpenAI26, fff, KunThom26}, non-MF \cite{caleb, scott} and non-hyperlinear \cite{Thom26, ALT26} groups.

In all three problems, the main ingredient is a property $(T)$ group with certain self-similarity properties. The world of Thompson groups is rich in self-similarity, and indeed the first construction of non-sofic groups \cite{OpenAI26} was, in practice, the only available source of Thompson-like groups with property $(T)$: the elementary groups of the binary Leavitt algebra (thanks to a connection established independently by Birget \cite{birget} and Nekrashevych \cite{nekr}, and the result of Ershov--Jaikin-Zapirain \cite{EJ}). \cref{thm:main-intro} gives a more natural source of Thompson-like property $(T)$ groups, and leads to the following.

\begin{theorem}
\label{main:inapprox}
For sufficiently large $n$, the Brin--Thompson group $nV$ is not hyperlinear (in particular not sofic) nor MF.
\end{theorem}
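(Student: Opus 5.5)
The plan is to combine \cref{thm:main-intro} with the recent criteria of \cite{Thom26,ALT26} (non-hyperlinearity) and \cite{caleb} (non-$\MF$). In abstract form these criteria say the following. A group $\Gamma$ with property~$(T)$ fails to be hyperlinear (respectively $\MF$) once it contains a suitable self-similar configuration. Concretely, there should be a finite-index-type or ``doubling'' embedding $\Gamma\hookrightarrow \Gamma$, or a pair of commuting copies of $\Gamma$ inside $\Gamma$, that is compatible with a chosen generating set. This compatibility should force any approximate finite-dimensional representation to concentrate on an almost invariant vector, so that a trace or an almost representation would have to be trivial on a non-trivial element. The exact form I would use is the one isolated in \cite{Thom26,ALT26}: a property~$(T)$ group $\Gamma$ together with an injective endomorphism $\phi$ and elements witnessing that $\phi(\Gamma)$ and $\Gamma$ are ``conjugate-translates'' in a Higman--Thompson sense. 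For the $\MF$ case I would use the analogous criterion of \cite{caleb}, which I expect to need essentially the same data. I would first restate these criteria precisely, with their hypotheses, as black boxes.

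Next I fix $n$ large enough that $nV$ has property~$(T)$ by \cref{thm:main-intro}. I then exhibit the required self-similar structure inside $nV$. The natural maps are the following. For a dyadic rectangle $\boldsymbol w$, let $\iota_{\boldsymbol w}\colon nV\to nV$ conjugate by the prefix replacement $\frakC^n\to \boldsymbol w$ and extend by the identity off $\boldsymbol w$. Then $\iota_{(0,\emptyset,\dots)}$ and $\iota_{(1,\emptyset,\dots)}$ give two commuting injective copies of $nV$. Their product embeds $nV\times nV$, and by \cref{lem:brin-matching} the diagonal-type map $g\mapsto \iota_0(g)\iota_1(g)$ is conjugate in $nV$ to the identity on the appropriate generating involutions. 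This is the standard ``$V\cong V\times V$-type'' self-similarity. I would verify the hypotheses by hand on the generating set $S_n$ of address transpositions. For those generators, conjugacy of involutions with fixed points (\cref{lem:brin-matching}) gives the needed conjugating elements. Since $nV$ is simple \cite{Brin10}, every non-trivial homomorphism is injective. This lets me upgrade ``non-trivial'' conclusions of the criteria to ``not embeddable into a metric ultraproduct'', and that is exactly the failure of hyperlinearity, and respectively of being $\MF$. Non-soficity follows since sofic groups are hyperlinear.

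The main obstacle is matching the precise hypotheses of the cited criteria, which are phrased for specific examples such as elementary groups over Leavitt algebras. In particular, the required compatibility between the self-similarity and the Kazhdan pair may include a quantitative or generating-set condition, not just an abstract embedding. To handle this I would first try to verify the abstract version: some map $\phi\colon\Gamma\to\Gamma$ with $g\mapsto \mathrm{diag}(g,\phi(g))$-type conjugacy. If that version is not stated in the sources, I would rerun their argument, replacing their Leavitt-algebra identities by the prefix-replacement identities above. Property~$(T)$ enters only through the spectral gap for the generating set $S_n$, which is finite. So no uniformity in $n$ is needed.
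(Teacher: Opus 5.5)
Your proposal does not prove the statement. It leaves both criteria as unidentified black boxes, guesses their hypotheses, and the self-similar data you propose to verify is not what either criterion uses.

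\textbf{Hyperlinearity.} The relevant result is \cref{criterion:hyper} (\cite[Corollary~D]{ALT26}). It says: if $\Gamma<G$ is \emph{infranormal} (the elements $t$ with $t\Gamma t^{-1}<\Gamma$ generate $G$), both groups have $(T)$, and $G$ is hyperlinear, then $C_G(\Gamma)$ is normal in $G$. The paper takes $G=nV$ and $\Gamma=\Fix(U_1)$, which is your $\iota_0(nV)$ and so has $(T)$. Because $\Gamma$ has trivial centre and acts minimally on $U_0$, its centraliser is $\Fix(U_0)$, which is not normal. The substantive step, absent from your plan, is infranormality. With $t_0,t_1$ as in \cref{not:Uw} one has $t_i\Gamma t_i^{-1}=\Fix(t_iU_1)<\Gamma$, and a localisation argument shows $\langle\Gamma,t_0,t_1\rangle=nV$.

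Your diagonal map $g\mapsto\iota_0(g)\iota_1(g)$ plays no role here. Moreover, \cref{lem:brin-matching} only conjugates each generating involution to its diagonal image by its own conjugator, which carries no information. A single simultaneous conjugator cannot exist: the diagonal map preserves the partition $U_0\sqcup U_1$, so it is not surjective and hence not inner.

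\textbf{MF.} Here the gap is more serious. Eckhardt's criterion \cite[Theorem~4.3]{caleb} only concludes that a permutational wreath product $\Z/2\wr_{G/\Gamma}G$ is not MF. The paper notes that such wreath products are hard to embed into Brin--Thompson groups, so ``essentially the same data'' will not do. Two ingredients are missing from your plan.
\begin{enumerate}
\item The strengthening \cref{criterion:MF:new}, proved by rerunning Eckhardt's argument. It treats semidirect products $G\ltimes A$ with $A$ a countable Boolean group, $H=t\Gamma t^{-1}\lneq\Gamma$, and $A^\Gamma\neq A^H$.
\item An embedding of such a product into $(n+1)V$. Let $nV$ act on the base of $\frakC^{n+1}=\frakC^n\times\frakC$. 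Let $A$ be generated by the commuting involutions $a_K$ that flip the last coordinate over a dyadic rectangle $K$. Then $a_{U_{01}}$ is fixed by $H$ but not by $\Gamma$.
\end{enumerate}
This yields \cref{thm:MF}: $(n+1)V$ is not MF once $nV$ has $(T)$. Your plan does not anticipate this index shift, which is what makes the MF argument work.

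Finally, the simplicity of $nV$ is not needed for this theorem. The hyperlinearity criterion gives a contradiction directly, and non-MF passes from the subgroup $nV\ltimes A$ to $(n+1)V$. Simplicity enters only in the Schatten corollary.
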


This makes Brin--Thompson groups the first groups that were extensively studied in the literature and that happen to be inapproximable in either of these three senses: the previous examples are constructed ad-hoc. They are also the first groups of type $F_\infty$ \cite{FMWZ13}, and the first finitely presented simple groups, that are inapproximable. Note that finitely presented simple groups have long been proposed as candidates for non-sofic groups, in particular Thompson's group $V$, and Burger--Mozes groups \cite{BM}.

The proof for soficity and hyperlinearity is almost immediate, and it will work whenever $nV$ has property $(T)$ (\cref{cor:sofichyper}). For the non-MF case, where the existing criteria are less flexible, we will prove that whenever $nV$ has property $(T)$, $(n+1)V$ is not MF (\cref{thm:MF}).

\medskip

Other approximation properties received attention in the past few years in particular approximability with respect to the unnormalised Schatten $p$-norm, for $1 \leq p < \infty$. For each of these properties, a non-approximable group had been found before the recent breakthroughs, and with independent techniques \cite{2:approx, p:approx, 1:approx}. Thanks to \cite{1:approx}, \cref{main:inapprox} readily implies:

\begin{corollary}
\label{schatten}
    For sufficiently large $n$ (independent of $p$), the Brin--Thompson group $nV$ is not Schatten $p$-approximable, for any $1 \leq p < \infty$.
\end{corollary}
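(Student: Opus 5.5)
The plan is to deduce \cref{schatten} directly from \cref{main:inapprox} and the main theorem of \cite{1:approx}. I take that theorem to have the following shape: if a group $\Gamma$ is Schatten $p$-approximable for some $1\le p<\infty$, then it is hyperlinear, or at least approximable in a metric that \cref{main:inapprox} already excludes. For non-hyperlinearity to transfer to every Schatten class, the comparison must hold for all finite $p$. My first task is therefore to pin down the exact statement in \cite{1:approx}. It should say that $p$-approximable groups are $1$-approximable, or equivalently that Schatten approximability is monotone in $p$ and the weakest case reduces to hyperlinearity for the groups under consideration.

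The steps are as follows.
\begin{enumerate}[label=(\arabic*)]
\item Fix $n_0$ as in \cref{main:inapprox}, so that $nV$ is not hyperlinear for every $n\ge n_0$. This threshold depends only on \cref{thm:main-intro} and not on $p$, which gives the uniformity in $p$ claimed in the statement.
\item Let $n\ge n_0$ and $1\le p<\infty$, and suppose for contradiction that $nV$ is Schatten $p$-approximable.
\item Apply the implication from \cite{1:approx}. If necessary, use that $nV$ is finitely presented and simple. Simplicity lets us upgrade an approximation that is merely non-trivial on a finite ball into an injective homomorphism into the relevant metric ultraproduct, as in the introduction's discussion of finite witnesses. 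Conclude that $nV$ is hyperlinear, which is a contradiction.
\end{enumerate}

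The main obstacle is step~(3). The unnormalised Schatten norms do not obviously dominate the normalised Hilbert--Schmidt distance, so I cannot simply renormalise an unnormalised approximation. What \cite{1:approx} should actually supply is a criterion phrased through property $(T)$ or through hyperlinearity-type obstructions that covers all $p$ at once.

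If the cited result instead says that groups with property $(T)$ satisfying an extra self-similarity or commensuration condition are not $p$-approximable for any $p$, I would apply it directly. The hypotheses would be verified for $nV$, $n\ge n_0$, using \cref{thm:main-intro} together with the self-similarity that $nV$ inherits from its action on $\frakC^n$; this is the same input that drives the proof of \cref{main:inapprox}.
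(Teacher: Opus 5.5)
There is a genuine gap at your step~(3), and it is not just a matter of pinning down the exact statement in \cite{1:approx}. The paper takes no implication from Schatten $p$-approximability to hyperlinearity from that reference, and none is available. You observe the reason yourself: an unnormalised approximation $\phi\colon nV\to U(d)$ only guarantees $\|\phi(g)-1\|_p\geq\alpha$. That separation may come from a subspace of bounded dimension, so it says nothing about the normalised trace as $d\to\infty$. Non-hyperlinearity of $nV$ is therefore the wrong input. Your fallback guess, a criterion for property~$(T)$ groups with self-similarity, is not what is used either.

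What the paper extracts from \cite[Theorem~1.3 and Remark~3.1]{1:approx} is a \emph{stability} criterion. Write $\mathcal{G}_q$ for the unitary groups with the unnormalised Schatten $q$-norm. If a group contains an involution lying in the kernel of every finite quotient, and the group is $\mathcal{G}_q$-stable for some $1\leq q\leq\infty$, then it is not $\mathcal{G}_p$-approximable for any $1\leq p\leq q$.

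The missing idea is how to obtain this stability, and it uses the non-MF half of \cref{main:inapprox}, namely \cref{thm:MF}. Take $q=\infty$ and suppose $nV$ is not MF. Consider an operator-norm asymptotic homomorphism $nV\to U(d_k)$ that is not asymptotically trivial. After passing to a subsequence, it induces a non-trivial homomorphism into $U(\mathcal{Q}_d)$, which is injective by simplicity --- a contradiction. Hence every such asymptotic homomorphism is asymptotically close to the trivial representation, so $nV$ is vacuously $\mathcal{G}_\infty$-stable.

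Being infinite and simple, $nV$ has no non-trivial finite quotients. So any of its involutions lies in the kernel of every finite quotient, and the criterion with $q=\infty$ excludes $\mathcal{G}_p$-approximability for all $1\leq p<\infty$ at once. The uniformity in $p$ comes from the single threshold of \cref{thm:MF}.

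Heuristically, the criterion works as follows. A $\mathcal{G}_p$-approximation is also an operator-norm asymptotic homomorphism, since $\|\cdot\|_\infty\leq\|\cdot\|_p$. It is therefore close to a genuine finite-dimensional representation, which kills the involution $s$. Since $\phi(s)$ is an almost-involution whose eigenvalues are all near $1$, this closeness upgrades to $\|\phi(s)-1\|_p\to0$, contradicting separation.

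So simplicity is used to turn non-MF into stability, not to convert a Schatten approximation into a hyperlinear one.
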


\subsection{Soficity and hyperlinearity}

The original discovery of a non-sofic group \cite{OpenAI26} relies on a pair of property $(T)$ groups that have the following property. The definition is from \cite{KunThom26}.

\begin{definition}
We say that a subgroup $\Gamma < G$ is \emph{infranormal} if the set $\{ t \in G : t \Gamma t^{-1} < \Gamma\}$ generates $G$.
\end{definition}

This was the key notion needed to connect Kun's theorem \cite{Kun16} that sofic approximations of property $(T)$ groups are essentially \emph{disjoint unions of} expanders, to the Kun--Thom centraliser rigidity theorem \cite{KunThom19}, which needs certain sofic approximations to be essentially expanders. Using a deeper centraliser rigidity theorem of Alekseev--Thom \cite{alekseevthom}, Kun--Thom were able to give a more robust criterion for non-soficity \cite[Theorem B]{KunThom26}. In \cite{Thom26}, Thom proved the analogous result for hyperlinearity, conditionally on an analogue of the centraliser rigidity theorem of Alekseev--Thom \cite[Open Problem 6.2]{alekseevthom}. This has since been resolved \cite{ALT26}, hence we obtain an analogous criterion for non-hyperlinearity (which subsumes the one for non-soficity).

\begin{theorem}[{\cite[Corollary D]{ALT26}}]
\label{criterion:hyper}
Let $\Gamma < G$ be an infranormal subgroup, and suppose that both have property $(T)$. If $G$ is hyperlinear (in particular, if $G$ is sofic) then $C_G(\Gamma)$ is normal in $G$.
\end{theorem}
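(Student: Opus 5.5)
Since \cref{criterion:hyper} is quoted from \cite[Corollary D]{ALT26}, I would reconstruct its proof along the lines of Thom's conditional argument \cite{Thom26}, with the centraliser rigidity input now supplied by \cite{ALT26}. Assume $G$ is hyperlinear and fix an injective homomorphism $\pi\colon G\to U(\mathcal R^\omega)$ into the unitary group of a tracial ultraproduct $\mathcal R^\omega=\prod_\omega M_{k_n}(\mathbb C)$. Let $c\in C_G(\Gamma)$. To show that $C_G(\Gamma)$ is normal, it suffices to show $tct^{-1}\in C_G(\Gamma)$ and $t^{-1}ct\in C_G(\Gamma)$ for every $t$ in the generating set $\{t: t\Gamma t^{-1}<\Gamma\}$. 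Since $\pi$ is injective, it is then enough to show that $\pi(tct^{-1})$ commutes with $\pi(\Gamma)$ in $\mathcal R^\omega$, and similarly for $t^{-1}ct$.

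The basic objects are the relative commutants
\[
A=\pi(\Gamma)'\cap\mathcal R^\omega \quad\text{and}\quad B=\pi(t\Gamma t^{-1})'\cap\mathcal R^\omega=\pi(t)A\pi(t)^{*}.
\]
Since $t\Gamma t^{-1}<\Gamma$, we have $A\subseteq B$, and $B$ is a unitary conjugate of $A$. The target is $A=B$. Then $\pi(tct^{-1})=\pi(t)\pi(c)\pi(t)^*\in B=A$, which is exactly the required commutation. Applying $\Ad\pi(t)^*$ to $A=B$ gives $\pi(t)^*A\pi(t)=A$, which handles $t^{-1}ct$.

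To prove $A=B$ I would use property~$(T)$ of $\Gamma$ and of $G$. The property of $\Gamma$ lets us lift $A$: by the centraliser rigidity theorem of \cite{ALT26} (the unitary analogue of Alekseev--Thom), after perturbing $\pi|_\Gamma$ to a genuine representation $\rho_n$ on each $\mathbb C^{k_n}$ (possible up to small error via Kazhdan projections), the relative commutant $A$ equals the ultraproduct $\prod_\omega\rho_n(\Gamma)'$ of finite-dimensional commutants. The analogous description holds for $B$, using the restriction of $\rho_n$ transported by a lift of $\pi(t)$. In finite dimensions the argument is simple: a subalgebra contained in a unitarily conjugate copy of itself coincides with it by dimension count. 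To pass this to the ultraproduct I would use a trace invariant rather than raw dimension, for instance the traces of minimal central projections in $\rho_n(\Gamma)'$, equivalently the normalised isotypic multiplicity data. Both $A$ and $B$ carry the same such data, and strict containment would force a definite loss of trace, contradicting $A\subseteq B$ with $B\cong A$. Property~$(T)$ of $G$ would ensure that the representations involved decompose into pieces with uniform spectral gap, so that this trace bookkeeping survives the ultralimit.

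The main obstacle is precisely this rigidity step. The statement that relative commutants of property~$(T)$ images in $\mathcal R^\omega$ are liftable is the content of \cite{ALT26} and the reason the hyperlinear case was open. Even granting it, one must control the infinitely many isotypic components uniformly in $n$ so that the finite-dimensional dimension count passes to the limit. I would try first to reduce to the finitely many summands carrying trace at least $\delta$, and make the remaining mass arbitrarily small using the spectral gap.
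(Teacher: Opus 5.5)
The paper gives no proof of this statement (it is imported from \cite[Corollary~D]{ALT26}), so your sketch has to stand on its own. Your reduction to $A=B$ is correct; the $t^{-1}ct$ half is in fact automatic from $t\Gamma t^{-1}\le\Gamma$. Both steps meant to prove $A=B$, however, fail as stated.

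\textbf{The lifting step.} Perturbing $\pi|_\Gamma$ to genuine representations $\rho_n$ of $\Gamma$ on $\mathbb C^{k_n}$ is Hilbert--Schmidt stability of $\Gamma$, and property~(T) does not give it. For $\SL_3(\Z)$, take an irreducible representation $\rho$ of large dimension $d$ coming from a congruence quotient. Compress it to a corner of rank $d-1$ and unitarise via polar decomposition. This gives an asymptotic homomorphism in the normalised Hilbert--Schmidt norm. A genuine $\sigma$ close to it would make $\sigma\oplus1$ close to $\rho$, so the Kazhdan constant would produce a nonzero intertwiner $\sigma\oplus1\to\rho$, forcing $\rho\cong\sigma\oplus1$, which is absurd. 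In the case the paper needs, the step is moreover circular. $\Gamma\cong nV$ is infinite and simple, so by Malcev's theorem all its finite-dimensional unitary representations are trivial. Your $\rho_n$ would therefore force $\pi(\Gamma)=1$, contradicting injectivity before any commutant enters. Whatever rigidity you import from \cite{ALT26} must concern the relative commutant $\pi(\Gamma)'\cap\mathcal R^\omega$ itself, not a lifting of $\pi|_\Gamma$.

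\textbf{The dimension count.} Even granting $A=\prod_\omega C_n$ and $B=\prod_\omega v_nC_nv_n^*$ with finite-dimensional $C_n$, the count does not pass to the ultraproduct, and this is a genuine obstruction rather than bookkeeping. Let $k_L=(L+1)2^L$ and split $\{1,\dots,k_L\}$ into blocks $R_0,\dots,R_L$ of size $2^L$. Let $P$ cut each $R_m$ into parts of size $2^m$. Let $Q$ cut each $R_m$ with $m\ge1$ into parts of size $2^{m-1}$, halving the $P$-parts, and keep $R_0$ as a single part.
\begin{itemize}
\item Both partitions have exactly $2^{L-j}$ parts of size $2^j$ for $0\le j\le L$. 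Hence a permutation unitary $v_L$ carries the diagonal algebra $C_L$ of $P$ onto that of $Q$.
\item Every $P$-part outside $R_0$ is a union of $Q$-parts. So the unit ball of $C_L$ lies within $\|\cdot\|_2$-distance $(L+1)^{-1/2}$ of $v_LC_Lv_L^*$.
\item The unitary of $v_LC_Lv_L^*$ that is $1$ on $R_0$ and takes opposite signs on the two $Q$-halves of each $P$-part outside $R_0$ has distance at least $\sqrt{L/(L+1)}$ from $C_L$.
\end{itemize}
Thus $\prod_\omega C_L\subsetneq\prod_\omega v_LC_Lv_L^*$. Every minimal central projection here has trace at most $1/(L+1)$, so your reduction to finitely many summands of trace at least $\delta$ captures nothing. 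Spectral gap does not prevent this either: for the regular representations of the quotients $\SL_3(\mathbb F_p)$ of $\SL_3(\Z)$, every isotypic component already has trace tending to $0$.

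The decisive ingredient is therefore missing: how property~(T) of $G$, together with centraliser rigidity, rules out this kind of mass shifting across unboundedly many scales. Your sketch asserts it without argument, and property~(T) of $G$ is otherwise never used.
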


We can now apply this directly to $nV$.

\begin{corollary}
\label{cor:sofichyper}
If $nV$ has property $(T)$, then it is not hyperlinear (in particular not sofic).
\end{corollary}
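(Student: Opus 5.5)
We apply \cref{criterion:hyper} with $G=nV$. The task is to find an infranormal subgroup $\Gamma<nV$ that has property $(T)$ and whose centraliser $C_{nV}(\Gamma)$ is non-trivial but not normal. Since $nV$ is simple, the last condition reduces to $C_{nV}(\Gamma)\neq 1$ and $C_{nV}(\Gamma)\neq nV$, and the second of these is automatic once $\Gamma\neq1$, because $nV$ has trivial centre.

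For $\Gamma$ we take a copy of $nV$ supported on a proper dyadic rectangle. Fix the rectangle $R=C(0)\times\frakC^{n-1}$ and the prefix replacement $\phi\colon\frakC^n\to R$ given by $\xi\mapsto(0\xi_1,\xi_2,\dots,\xi_n)$. Let $\Gamma$ consist of the elements that act on $R$ as $\phi g\phi^{-1}$ for some $g\in nV$ and act as the identity on $\frakC^n\setminus R$. These are elements of $nV$, and $\Gamma\cong nV$, so $\Gamma$ has property $(T)$ by hypothesis. The centraliser $C_{nV}(\Gamma)$ contains every element supported on the complement $C(1)\times\frakC^{n-1}$, so it is non-trivial.

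The main step is infranormality. Consider the set $X=\{t\in nV: t\Gamma t^{-1}<\Gamma\}$. It contains every $t$ with $t(R)\subseteq R$ that restricts on $R$ to a conjugate $\phi h\phi^{-1}$ with $h\in nV$. It also contains $\Gamma$ itself, and every element supported on $\frakC^n\setminus R$, since such elements commute with $\Gamma$. It therefore suffices to show that these elements generate $nV$. We argue that the subgroup they generate contains the full group $\Gamma'$ of elements supported on $C(1)\times\frakC^{n-1}$. It also contains an element $\tau$ that maps a small rectangle $R'\subset R$ onto a small rectangle in the complement of $R$. We can take $\tau$ to be an address transposition exchanging a subrectangle of $R$ with a subrectangle of $\frakC^n\setminus R$ and fixing a further piece of $R$. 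To see that $\tau$ preserves $R$ in the relevant sense, or alternatively to realise $\tau$ as a product of such elements, we would compose an element of $\Gamma$ with one of $\Gamma'$.

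The cleaner route is to use the standard fact that $nV$ is generated by any two copies of its rigid stabilisers supported on an open cover, for instance $\Gamma$ together with the rigid stabiliser of a rectangle overlapping both $R$ and its complement. To put that overlapping subgroup inside $\langle X\rangle$, we would conjugate by an element of $X$. I expect this generation claim to be the only delicate point. I would handle it with the standard argument that $nV$ is generated by address transpositions of small rectangles. Such a transposition either lies in $\Gamma$ or $\Gamma'$, or it swaps a piece of $R$ with a piece of the complement; in the last case it factors as a conjugate of one of the previous types by an element of $\Gamma\Gamma'$ that reshuffles pieces. Once infranormality is established, \cref{criterion:hyper} applies: if $nV$ were hyperlinear, then $C_{nV}(\Gamma)$ would be a non-trivial proper normal subgroup of the simple group $nV$, which is a contradiction. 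Soficity then also fails, since every sofic group is hyperlinear.
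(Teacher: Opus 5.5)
\paragraph{Assessment.} Your handling of the centraliser is fine. Using simplicity and the trivial centre of $nV$ is a slicker route than the paper's exact computation $C_{nV}(\Gamma)=\Fix(R)$. The infranormality step, however, has a genuine gap.

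\paragraph{The gap.} Since $t\Gamma t^{-1}=\Fix\bigl(t(\frakC^n\setminus R)\bigr)$ is the rigid stabiliser of $t(R)$, one has $X=\{t\in nV: t(R)\subseteq R\}$. Every element you actually put into $X$ preserves $R$ setwise. This is clear for $\Gamma$ and for $\Gamma'=\Fix(R)$, and an element restricting on $R$ to $\phi h\phi^{-1}$ with $h\in nV$ maps $R$ onto $R$. So these elements generate only the setwise stabiliser of $R$, which is $\Gamma\Gamma'\cong\Gamma\times\Gamma'$, a proper subgroup.

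Your $\tau$ is not in $X$, because $\tau(R)\not\subseteq R$. Nor can it arise by composing elements of $\Gamma$ and $\Gamma'$, or as a conjugate of an element of $\Gamma\cup\Gamma'$ by an element of $\Gamma\Gamma'$, since all of these preserve $R$. Hence your final transposition argument breaks down exactly for transpositions swapping a piece of $R$ with a piece of the complement. The ``cleaner route'' stalls for the same reason: conjugating $\Gamma$ or $\Gamma'$ by the elements of $X$ you have produced just returns $\Gamma$ or $\Gamma'$. (The generation fact you quote also needs the two supports to overlap; for the cover $\{R,\frakC^n\setminus R\}$ it fails.)

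\paragraph{The missing idea.} $X$ contains strictly contracting elements, $t(R)\subsetneq R$, and these necessarily pull part of the complement into $R$. Use the notation of \cref{not:Uw}, where $R=U_0$, and let $t$ act on the first coordinate by the prefix replacements $U_0\to U_{00}$, $U_{10}\to U_{01}$, $U_{11}\to U_1$. Then $t\Gamma t^{-1}=\Fix(\frakC^n\setminus U_{00})\subsetneq\Gamma$, so $t\in X$. Moreover $t^{-1}\Gamma t=\Fix(U_{11})$, the rigid stabiliser of $U_0\cup U_{10}$, lies in $\langle X\rangle$.

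Combine this with your correct observation that $\Gamma'=\Fix(U_0)\subseteq X$. You now have rigid stabilisers of two clopen sets that cover $\frakC^n$ and overlap in $U_{10}$, and your transposition argument goes through. After refining, the only problematic transpositions are $(\boldsymbol v\ \boldsymbol w)$ with $\boldsymbol v\subseteq U_0$ and $\boldsymbol w\subseteq U_{11}$. Each of these equals $(\boldsymbol u\ \boldsymbol w)(\boldsymbol v\ \boldsymbol u)(\boldsymbol u\ \boldsymbol w)$ for any rectangle $\boldsymbol u\subseteq U_{10}$. Address transpositions generate $nV$, since they generate a non-trivial normal subgroup of a simple group.

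\paragraph{Comparison with the paper.} The paper uses two such contractions $t_0,t_1$, obtaining $\Fix(U_{10})$ and $\Fix(U_{11})$. It then concludes by a local argument on a small clopen set $U'\subset U_{10}$.
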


Combined with \cref{thm:main-intro}, we deduce the sofic and hyperlinear parts of \cref{main:inapprox}.
We start by setting up some notation since it will be used again later on.

\begin{notation}
\label{not:Uw}

Let $G \coloneqq nV$. For a finite binary string $w \in \{0, 1\}^*$ let $U_w \coloneqq w\frakC \times \frakC^{n-1} \subset \frakC^n$, so that $\frakC^n = U_0 \sqcup U_1$. Let $\Gamma \coloneqq \Fix(U_1)$, that is the copy of $nV$ localised at $U_0$. Let $t$ be the inverse of the first standard generator of Thompson's group $F$ extended to $G$, namely:
\[t U_0 = U_{00}; \quad t U_{10} = U_{01}; \quad t U_{11} = U_1.\]
\begin{center}
\begin{tikzpicture}[
    line width=1pt,
    >=Stealth,
    every node/.style={font=\normalsize}
]

\def\ytop{1.5}
\def\ybot{-1.5}
\def\unit{3}   

\pgfmathsetmacro{\txa}{0*\unit}
\pgfmathsetmacro{\txb}{1*\unit}
\pgfmathsetmacro{\txc}{1.5*\unit}
\pgfmathsetmacro{\txd}{2*\unit}

\pgfmathsetmacro{\bxa}{0*\unit}
\pgfmathsetmacro{\bxb}{0.5*\unit}
\pgfmathsetmacro{\bxc}{1*\unit}
\pgfmathsetmacro{\bxd}{2*\unit}

\draw (\txa,\ytop) -- (\txb,\ytop);
\draw (\txa,\ytop-0.12) -- (\txa,\ytop+0.12);
\draw (\txb,\ytop-0.12) -- (\txb,\ytop+0.12);
\node[above=4pt] at ({(\txa+\txb)/2},\ytop+0.12) {$U_0$};

\draw (\txb,\ytop) -- (\txc,\ytop);
\draw (\txc,\ytop-0.12) -- (\txc,\ytop+0.12);
\node[above=4pt] at ({(\txb+\txc)/2},\ytop+0.12) {$U_{10}$};

\draw (\txc,\ytop) -- (\txd,\ytop);
\draw (\txd,\ytop-0.12) -- (\txd,\ytop+0.12);
\node[above=4pt] at ({(\txc+\txd)/2},\ytop+0.12) {$U_{11}$};

\draw (\bxa,\ybot) -- (\bxb,\ybot);
\draw (\bxa,\ybot-0.12) -- (\bxa,\ybot+0.12);
\draw (\bxb,\ybot-0.12) -- (\bxb,\ybot+0.12);
\node[below=4pt] at ({(\bxa+\bxb)/2},\ybot-0.12) {$U_{00}$};

\draw (\bxb,\ybot) -- (\bxc,\ybot);
\draw (\bxc,\ybot-0.12) -- (\bxc,\ybot+0.12);
\node[below=4pt] at ({(\bxb+\bxc)/2},\ybot-0.12) {$U_{01}$};

\draw (\bxc,\ybot) -- (\bxd,\ybot);
\draw (\bxd,\ybot-0.12) -- (\bxd,\ybot+0.12);
\node[below=4pt] at ({(\bxc+\bxd)/2},\ybot-0.12) {$U_1$};

\draw[->,thick] (-1.2,\ytop-0.3) -- (-1.2,\ybot+0.3)
    node[midway,left=2pt] {$t$};

\end{tikzpicture}
\end{center}
\end{notation}

\begin{proof}
Let $U_w, G, \Gamma$ and $t$ be as in \cref{not:Uw}. By assumption both $\Gamma$ and $G$ have property $(T)$. Since $\Gamma$ has trivial centre and acts minimally on $U_0$, the centraliser of $\Gamma$ must act trivially on $U_0$, therefore $C_G(\Gamma) = \Fix(U_0)$, which is not normal in $G$. Thanks to \cref{criterion:hyper}, it remains to show that $\Gamma < G$ is infranormal.

Let $t_1 \coloneqq t$, and let $t_0$ be $t$ precomposed with an involution flipping $U_{10}$ and $U_{11}$, so that $t_i U_{1i} = U_1$. Then $t_i U_1 \supset U_1$ and so $t_i \Gamma t_i^{-1} = \Fix(t_i U_1) < \Fix(U_1) = \Gamma$. We need show that $G' \coloneqq \langle \Gamma, t_0, t_1 \rangle$ is equal to the whole of $G$.
First, $G'$ contains $t_i^{-1} \Gamma t_i = \Fix(t_i^{-1}U_1) = \Fix(U_{1i})$. For any nonempty clopen $U' \subset U_{10}$, we can pick an element of $\Fix(U_{11})$ mapping $U'$ to $U_{10}$, and conjugating $\Fix(U_{10})$ by its inverse we obtain $\Fix(U') < G'$.

Now let $g \in G$ be an arbitrary element. Let $U' \subset U_{10}$ be a sufficiently small clopen set with the property that $g U'$ does not intersect both $U_{10}$ and $U_{11}$. Then there exists an element $h \in G'$ such that $h g U' = U'$. Indeed: if $g U'$ is disjoint from $U_{11}$, we can pick $h \in \Fix(U_{11})$; else if $g U' \subset U_{11}$ (the case illustrated below), we first map it into $U_0$ with an element of $\Fix(U_{10})$ and then to $U'$ with an element of $\Fix(U_{11})$.

\begin{center}
\begin{tikzpicture}[
    line width=1pt,
    >=Stealth,
    every node/.style={font=\normalsize}
]

\draw (0,0) -- (4,0);
\draw (0,-0.12) -- (0,0.12);
\draw (4,-0.12) -- (4,0.12);
\node[below=4pt] at (1,-0.12) {$U_0$};

\def\xa{5.5}     
\def\xb{9.5}     
\def\ytop{2}     
\def\ybot{-2}    

\draw (\xa,\ytop) -- (\xb,\ytop);
\draw (\xa,\ytop-0.12) -- (\xa,\ytop+0.12);
\draw (\xb,\ytop-0.12) -- (\xb,\ytop+0.12);
\node[above=4pt] at (\xb-0.6,\ytop+0.12) {$U_{10}$};

\draw (\xa,\ybot) -- (\xb,\ybot);
\draw (\xa,\ybot-0.12) -- (\xa,\ybot+0.12);
\draw (\xb,\ybot-0.12) -- (\xb,\ybot+0.12);
\node[below=4pt] at (\xb-0.6,\ybot-0.12) {$U_{11}$};

\def\va{6.4}
\def\vb{7.2}
\draw[very thick] (\va,\ytop) -- (\vb,\ytop);
\draw[decorate,decoration={brace,amplitude=4pt}]
    (\va,\ytop+0.12) -- (\vb,\ytop+0.12)
    node[midway,above=4pt] {$U'$};

\def\ga{7.4}
\def\gb{8.2}
\draw[very thick] (\ga,\ybot) -- (\gb,\ybot);
\draw[decorate,decoration={brace,amplitude=4pt,mirror}]
    (\ga,\ybot-0.12) -- (\gb,\ybot-0.12)
    node[midway,below=4pt] {$gU'$};

\pgfmathsetmacro{\Vmid}{(\va+\vb)/2}
\pgfmathsetmacro{\gVmidval}{(\ga+\gb)/2}

\pgfmathsetmacro{\gVmid}{\Vmid+0.35}
\pgfmathsetmacro{\ggVmid}{\gVmidval+0.35}
\pgfmathsetmacro{\hgVmid}{\gVmidval-0.35}
\pgfmathsetmacro{\hVmid}{\Vmid-0.35}

\draw[->,thick]
    (\gVmid,\ytop-0.4)
    .. controls (\gVmid+1.3,\ytop-1.3) and (\ggVmid+1.3,\ybot+1.3) ..
    (\ggVmid,\ybot+0.4)
    node[pos=0.5,right=2pt] {$g$};

\draw[->,thick]
    (\hgVmid,\ybot+0.4)
    .. controls (\hgVmid,\ybot+1.2) and (4.1,-1.3) ..
    (3.1,0)
    node[pos=0.5,above left=1pt] {$h$}
    .. controls (3.8,1.3) and (\hVmid,\ytop-1.2) ..
    (\hVmid,\ytop-0.4);

\end{tikzpicture}
\end{center}

Finally, since elements of $G$ are defined locally, there exists an element $k \in \Fix(U_{11}) \subset G'$ whose local action on $U'$ is equal to that of $h g$, and so $k^{-1} h g \in \Fix(U') < G'$. Hence $g = h^{-1} \cdot k \cdot k^{-1} h g \in G'$ and we conclude.
\end{proof}

\subsection{A criterion for non-MF groups}

Following \cite{MF} we say that a group is MF\footnote{Some people say this stands for \emph{matricially finite}, others for \emph{matricial field}.} if it admits a faithful homomorphism into $U(\Qd)$, where $d = (d_n)_{n \geq 1}$ is a sequence of integers and $\Qd$ is the $\Cstar$-algebra
\[\Qd \coloneqq \prod_{n \geq 1} M_{d_n} / \bigoplus_{n \geq 1} M_{d_n}.\]
The story for non-MF groups is slightly different. The following criterion was discovered by Eckhardt (similar criteria were discovered concurrently and independently by Sauers \cite{scott}):

\begin{theorem}[{\cite[Theorem 4.3]{caleb}}]
\label{criterion:MF}

Let $\Gamma$ be a group with property $(T)$. Let $\Gamma < G$ and suppose that there exists $t \in G$ be such that $t \Gamma t^{-1} \lneq \Gamma$. Then $\Z/2 \wr_{G/\Gamma} G$ is not MF.
\end{theorem}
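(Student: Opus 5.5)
The plan is to argue by contradiction. Assume $\Z/2\wr_{G/\Gamma}G$ is MF, and use property $(T)$ of $\Gamma$, through its Kazhdan projection, together with the finiteness of $\Qd$ to force a lamp relation that fails in the wreath product. Set up notation first. Write $b$ for the lamp generator sitting at the coset $t\Gamma\in G/\Gamma$. Then $b$ commutes with the stabiliser $t\Gamma t^{-1}$ of that coset. Since $t\Gamma t^{-1}\lneq\Gamma$, choose $\gamma\in\Gamma\setminus t\Gamma t^{-1}$. Then $\gamma t\Gamma\neq t\Gamma$, so $\gamma b\gamma^{-1}\neq b$ in the wreath product. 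The goal is to show that any homomorphism $\pi$ into $U(\Qd)$ satisfies $\pi(\gamma b\gamma^{-1})=\pi(b)$, contradicting faithfulness.

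First I will produce a doubled representation that is still matricial. Lift $\pi$ restricted to the finitely many relevant elements (generators of $\Gamma$, $t$, $b$, $\gamma$) to sequences of unitaries in $\prod_n M_{d_n}$. Then let the group act on $M_{d_n}$, equipped with the normalised Hilbert--Schmidt inner product, by conjugation. This gives asymptotic representations on Hilbert spaces of dimension $d_n^2$, hence a homomorphism into $U(\mathcal{Q}_{d^2})$. In this model, the invariant vectors of a subgroup $\Lambda$ are, asymptotically, the operators commuting with $\pi(\Lambda)$. Second, use property $(T)$ of $\Gamma$. Its maximal $\Cstar$-algebra contains a Kazhdan projection $e_\Gamma$, and this projection is a norm limit of polynomials in a finite generating set. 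Hence it maps to a projection $p=\pi(e_\Gamma)\in\mathcal{Q}_{d^2}$. By the spectral gap, $p$ lifts to the sequence of honest orthogonal projections onto approximately-invariant vectors. Conjugating by $t$ gives $q=\pi(t)p\pi(t)^*$, which is the Kazhdan projection of $t\Gamma t^{-1}$. Since $t\Gamma t^{-1}\le\Gamma$, we get $p\le q$.

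Third, use finiteness. In $\prod M_{d_n^2}/\bigoplus M_{d_n^2}$, the projections $p$ and $q$ are unitarily equivalent, and projections and equivalences lift. So their lifts have equal ranks for $\omega$-many (in fact cofinitely many) $n$, while $p\le q$ lifts to a near-inclusion. Hence $q-p$ lifts to projections of rank eventually $0$, and $p=q$. Concretely, every operator (asymptotically) commuting with $\pi(t\Gamma t^{-1})$ asymptotically commutes with $\pi(\Gamma)$. Fourth, apply this to the vector $\pi(b)$. It commutes exactly with $\pi(t\Gamma t^{-1})$, so it lies in the range of $q=p$. Therefore $\pi(b)$ commutes with $\pi(\gamma)$ in $\Qd$, giving $\pi(\gamma b\gamma^{-1})=\pi(b)$, which is the desired contradiction.

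The main obstacle is the second and third steps: making the Kazhdan projection argument rigorous for maps that are only asymptotic homomorphisms into $\prod M_{d_n}$, rather than genuine representations. The plan there is to use the uniform spectral gap coming from a Kazhdan pair $(S,\kappa)$ of $\Gamma$. The element $\frac{1}{|S|}\sum_{s\in S}s$ has a uniform spectral gap below $1$ off the invariant vectors in every unitary representation. This passes to $\Qd$ because $\Qd$ is a $\Cstar$-quotient and functional calculus commutes with quotient maps. One then lifts the spectral projection using the gap, so that it stays a genuine projection in each $M_{d_n^2}$ with controlled rank. The delicate point is the comparison of the ranks of $p$ and $\pi(t)p\pi(t)^*$ under an only approximately unitary lift of $\pi(t)$. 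I would handle this by lifting $\pi(t)$ to exact unitaries, since unitaries in $\Qd$ lift to unitaries in $\prod M_{d_n}$ because each $M_{d_n}$ has trivial $K_1$ obstruction.
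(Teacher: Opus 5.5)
\textbf{Verdict: there is a genuine gap, in Step~4.}

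Your Steps 1--3 are sound. For the conjugation representation $\sigma_n=\Ad\pi_n$ they re-prove the finiteness statement of \cref{fixedpoints:finite}: the Kazhdan projections $p\le q=\sigma(t)p\sigma(t)^*$ must coincide in a finite $\Cstar$-algebra.

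The problem is where $p$ and $q$ act. Their Hilbert space is $M_{d_n}$ with the \emph{normalised} Hilbert--Schmidt norm $\|\cdot\|_2$; to speak of ``the range of $q$'' at all you must pass to the ultraproduct $\prod_\omega(M_{d_n},\|\cdot\|_2)$. So $p=q$ only yields $\lim_\omega\|\pi_n(\gamma)\pi_n(b)\pi_n(\gamma)^*-\pi_n(b)\|_2=0$. The identity $\pi(\gamma b\gamma^{-1})=\pi(b)$ in $\Qd$ is an operator-norm statement, and it does not follow. Two symmetries that differ on a subspace of dimension $o(d_n)$, say of rank one, are at normalised $2$-distance $o(1)$ but at operator distance $2$. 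That small-rank discrepancy is exactly what has to be excluded, and the normalised norm cannot see it. Likewise, your ``concretely'' sentence is only true with ``asymptotically'' read in $\|\cdot\|_2$.

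\textbf{The missing idea} is the heart of the paper's proof of \cref{criterion:MF:new}, from which \cref{criterion:MF} follows with $a_0=b$ and $H=t\Gamma t^{-1}$: rescale so that the discrepancy has size one.

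First modify the lift so that $\pi_n|_A$ is an honest homomorphism. This uses weak semiprojectivity of $\Cstar(A)$, the continuous functions on a Cantor set. Then the $\pi_n(a)$ are commuting symmetries. In the \emph{unnormalised} norm, $\|\pi_n(a)-\pi_n(a')\|_{HS}$ is exactly $\Ad\pi_n(g)$-invariant for large $n$, and it is at least $2$ whenever nonzero.

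Now divide the unnormalised norm by
$k_n=\max_i\|\pi_n(\gamma_i b\gamma_i^{-1})-\pi_n(b)\|_{HS}\ge 2$, where $\gamma_0=\gamma$ and $\gamma_1,\dots,\gamma_m$ generate $\Gamma$ together with $H$. In this norm $\pi_n(b)$ itself has size $\sqrt{d_n}/k_n$, which may be unbounded, so it cannot serve as the vector. Instead:
\begin{itemize}
\item $c(g)=[\pi_n(g b g^{-1})-\pi_n(b)]$ is a bounded map, and a trace estimate shows it is an honest $1$-cocycle;
\item property $(T)$ of $\Gamma$ makes it a coboundary, $c(g)=\rho\sigma(g)\eta-\eta$;
\item $\eta$ is $H$-invariant but not $\Gamma$-invariant, because $\|c(\gamma_i)\|=1$ for some $i$.
\end{itemize}

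Only at this point does your finiteness argument give a contradiction. Rescaling a Hilbert norm does not change operator norms, so $\rho\sigma$ still factors through the finite algebra $\prod B(\calK_n)/\bigoplus B(\calK_n)$, contradicting \cref{fixedpoints:finite}.
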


This is inspired from the non-sofic wreath products from \cite{KunThom26}, but it is proved directly, without going through a robust criterion such as \cref{criterion:hyper}. We will need a slightly more permissive version of the same statement. Recall that a \emph{Boolean group} is a group in which every element has order $2$, in other words, it is the underlying group of an $\mathbb{F}_2$-vector space.

\begin{theorem}
\label{criterion:MF:new}

Let $\Gamma$ be a group with property $(T)$. Let $\Gamma < G$ and let $t \in G$ be such that $H \coloneqq t \Gamma t^{-1} \lneq \Gamma$. Let A be a countable Boolean group, and let $W \coloneqq G \ltimes A$ be a semidirect product, such that the action of $G$ on $A$ by automorphism satisfies $A^{\Gamma} \neq A^H$. Then $W$ is not MF.
\end{theorem}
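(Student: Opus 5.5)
The plan is to follow the structure of Eckhardt's proof of \cref{criterion:MF} directly, not to reduce to it formally. The reason is that the stabiliser of an element of $A$ need not be of the form $\Gamma$, so \cref{criterion:MF} does not apply verbatim. Suppose for a contradiction that $W$ embeds in $U(\Qd)$ via $\pi$. Since MF passes to subgroups, we may first shrink $A$. Fix $a\in A^H\setminus A^\Gamma$, and replace $A$ by the (still countable, Boolean) $G$-invariant subgroup generated by the orbit $G\cdot a$. Each $\pi(b)$, $b\in A$, is then a self-adjoint unitary in $\Qd$, and these pairwise commute.

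The first step is to transport the Kazhdan projections. Property $(T)$ of $\Gamma$, and hence of $H\cong\Gamma$, gives a Kazhdan projection $p_\Gamma$ in the maximal group $\Cstar$-algebra of $\Gamma$, and likewise $p_H$. Their images under $\pi$ are projections in $\Qd$ onto the $\Gamma$- and $H$-invariant parts. Since $H\le\Gamma$, we get $\pi(p_\Gamma)\le\pi(p_H)$. Since $H=t\Gamma t^{-1}$, we get $\pi(p_H)=\pi(t)\pi(p_\Gamma)\pi(t)^*$.

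In $\Qd$ the projections lift to sequences of genuine projections $P_n\le Q_n$ in $M_{d_n}$, with $Q_n$ unitarily conjugate to $P_n$ modulo the ideal. For large $n$ the ranks then agree, which forces $Q_n-P_n\to0$. Hence $\pi(p_\Gamma)=\pi(p_H)$ in $\Qd$. This is the Eckhardt trick, and I expect it to go through unchanged, since it uses only finite-dimensionality of the blocks and the strictness of nothing else.

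The second step exploits $a$. Since $a$ is $H$-fixed, $\pi(a)$ commutes with $\pi(H)$, hence with $\pi(p_H)=\pi(p_\Gamma)$. Pick $\gamma\in\Gamma$ with $a':=\gamma a\gamma^{-1}\ne a$, and put $b=aa'\neq1$. Using $\pi(\gamma)\pi(p_\Gamma)=\pi(p_\Gamma)=\pi(p_\Gamma)\pi(\gamma)$, one computes
\[
\pi(a')\pi(p_\Gamma)=\pi(\gamma)\pi(a)\pi(\gamma)^*\pi(p_\Gamma)=\pi(p_\Gamma)\pi(a),
\]
and so $\pi(b)\pi(p_\Gamma)=\pi(p_\Gamma)$. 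The same holds for every $b$ in the subgroup $B\le A$ generated by $\{a\,\gamma a\gamma^{-1}:\gamma\in\Gamma\}$. Thus $B$ acts trivially on the range of $\pi(p_\Gamma)$. Conjugating by $g\in G$ shows that $gBg^{-1}$ acts trivially on the range of $\pi(g)\pi(p_\Gamma)\pi(g)^*$.

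The main obstacle is turning this into a contradiction with faithfulness, because $\pi(p_\Gamma)$ might a priori be small or even zero. My first attempt would be to replace $\pi$ by $\pi\otimes\bar\pi$, which again lands in a $\Qd$-type algebra (with $d_n^2$). There the matrix units supply nonzero $\Gamma$-almost invariant and hence, by $(T)$, genuinely invariant vectors in each block. I would then try to show that the $G$-translates of the corresponding projections "cover" enough of each block. Concretely, I would try to prove that $e_b=(1-\pi(b))/2$, which is a nonzero projection by faithfulness, is dominated by a finite sum of translates of $1-\pi(p_\Gamma)$ whose normalised trace tends to $0$. I would carry this out by iterating the rank equality along $t^k\Gamma t^{-k}$, in the spirit of \cite{KunThom26}. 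If this covering argument fails, the fallback is to reproduce Eckhardt's wreath-product argument in \cite{caleb} verbatim. In that argument the Boolean structure is used only through characters of $A$ and the spectral projections of the commuting symmetries $\pi(b)$, and the hypothesis $A^\Gamma\ne A^H$ plays exactly the role that $\Gamma\ne H$ plays for the lamps indexed by $G/\Gamma$.
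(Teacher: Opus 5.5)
\textbf{There is a genuine gap.} Your first step is correct. Since $\Qd$ is a finite $\Cstar$-algebra, the relation $\pi(p_\Gamma)\le\pi(p_H)=\pi(t)\pi(p_\Gamma)\pi(t)^*$ forces equality; this is precisely \cref{fixedpoints:finite} (Eckhardt's Theorem~3.1), which the paper uses as a black box. Your second computation is also correct, but, as you note, it yields no contradiction, and nothing later in the proposal supplies one.

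The covering strategy fails for a structural reason. An element of $\Qd$ whose lifts have normalised trace tending to $0$ need not vanish, because the ideal $\bigoplus M_{d_n}$ is defined by the operator norm. Faithfulness only gives $\pi_n(a)\neq\pi_n(\gamma a\gamma^{-1})$ for infinitely many $n$, and the rank of this difference may be $o(d_n)$, even $1$. So dominating $e_b$ by something of small normalised trace is perfectly consistent with $e_b\neq0$ in $\Qd$. Any argument through normalised traces discards exactly what separates MF from tracial approximations. Similarly, invariant vectors for $\pi\otimes\bar\pi$ exist trivially (the identity matrix) and carry no information about $a$. The fallback of ``reproducing Eckhardt'' names the source but not the mechanism.

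\textbf{The missing idea} is to manufacture, in a representation factoring through a finite $\Cstar$-algebra, a vector fixed by $H$ but not by $\Gamma$; the paper does this with a cocycle.
\begin{enumerate}[label=(\roman*)]
\item $\Cstar(A)$ is continuous functions on the Cantor dual of $A$, hence weakly semiprojective. So one can choose asymptotic lifts $\pi_n$ with $\pi_n|_A$ honest homomorphisms, making the $\pi_n(a)$ commuting symmetries.
\item A difference of two such symmetries has spectrum in $\{-2,0,2\}$. Hence approximate conjugation by $\pi_n(g)$ preserves its \emph{unnormalised} Hilbert--Schmidt norm exactly, and a nonzero difference has HS norm at least $2$.
\item Let $\gamma_1,\dots,\gamma_m$ generate $\Gamma$ together with $H$, and put $k_n=\max_i\|\pi_n(\gamma_i\cdot a_0)-\pi_n(a_0)\|_{HS}\ge2$. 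Let $\sigma_n=\Ad\pi_n$ act on $M_{d_n}$ with norm $\|\cdot\|_{HS}/k_n$, and set $c(g)=[\pi_n(g\cdot a_0)-\pi_n(a_0)]_n$ in the ultraproduct $\calK$.
\item Use $\|D\|_{HS}^2\le\|D\|\Tr|D|$ together with $\Tr|u-v|=\tfrac12\|u-v\|_{HS}^2$ for commuting symmetries. This upgrades the operator-norm cocycle identity to $\calK$, so $c$ is a genuine $1$-cocycle on $\Gamma$.
\item By property~$(T)$, $c(g)=\rho\sigma(g)\eta-\eta$ for some $\eta\in\calK$. Since $H$ fixes $a_0$, $c|_H=0$ and $\eta$ is $H$-fixed. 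Since $\|c(\gamma_i)\|=1$ for some $i$, $\eta$ is not $\Gamma$-fixed.
\item This contradicts \cref{fixedpoints:finite} for $\rho\sigma$, which factors through the finite algebra $\prod B(\calK_n)/\bigoplus B(\calK_n)$.
\end{enumerate}
Heuristically, the rescaled vector $\pi(a_0)/k_n$ does not exist in $\calK$ because its norm is unbounded; only its coboundary does. Property~$(T)$ replaces it by a bounded vector $\eta$. Your second step reaches for this but does not have it.
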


This implies \cref{criterion:MF}: the group $G$ is acting on $A = \oplus_{G/\Gamma} \Z/2$ and the basis element concentrated at $t \Gamma \in G/\Gamma$ is fixed by $H$ and not by $\Gamma$. The proof is almost identical, but we go through it for completeness. We purposefully follow the same notation and numbering of equations as in the proof of \cite[Theorem 4.3]{caleb}, whenever possible. A key insight of the proof is in the following proposition, which is a direct consequence of \cite[Theorem 3.1]{caleb}.

\begin{proposition}[{\cite[Theorem 3.1]{caleb}}]
\label{fixedpoints:finite}

Let $\Gamma, t, H$ be as in \cref{criterion:MF:new}. Let $\pi \colon \langle \Gamma, t \rangle \to U(\mathcal{H})$ be a unitary representation that factors through a \emph{finite} unital $\Cstar$-algebra. Then $\mathcal{H}^{\pi(\Gamma)} = \mathcal{H}^{\pi(H)}$.
\end{proposition}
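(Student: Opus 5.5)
The plan is to compare the two fixed spaces through their orthogonal projections inside the finite $\Cstar$-algebra, using the Kazhdan projection of $\Gamma$ and Murray--von Neumann finiteness.

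First, one inclusion is trivial. Since $H\le\Gamma$, we have $\mathcal H^{\pi(\Gamma)}\subseteq\mathcal H^{\pi(H)}$. Let $P$ and $Q$ be the orthogonal projections onto $\mathcal H^{\pi(\Gamma)}$ and $\mathcal H^{\pi(H)}$, respectively. Then $P\le Q$, and the task is to show $Q-P=0$.

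Second, let $\mathcal A$ be the finite unital $\Cstar$-algebra through which $\pi$ factors, with $\pi=\rho\circ\pi_0$ for a unital $*$-homomorphism $\rho\colon\mathcal A\to B(\mathcal H)$. Since $\Gamma$ has property $(T)$, the maximal group $\Cstar$-algebra $\Cstar(\Gamma)$ contains the Kazhdan projection $p_\Gamma$. It is a central projection whose image in any unitary representation is the projection onto the $\Gamma$-invariant vectors. The restriction of $\pi_0$ to $\Gamma$ extends to $\Cstar(\Gamma)\to\mathcal A$, so $p:=\pi_0(p_\Gamma)$ is a projection in $\mathcal A$ with $\rho(p)=P$.

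Third, put $u=\pi_0(t)\in\mathcal A$, which is unitary. Since $H=t\Gamma t^{-1}$, a vector is $H$-invariant exactly when $\pi(t)^{-1}$ of it is $\Gamma$-invariant. Hence $Q=\pi(t)P\pi(t)^{*}=\rho(upu^*)$, and we set $q:=upu^*\in\mathcal A$.

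The step I expect to be the main obstacle is transferring the inequality $P\le Q$ back into $\mathcal A$, since $\rho$ need not be faithful. I would first try to pass to the image $\rho(\mathcal A)$, which is again a unital $\Cstar$-algebra. The main risk is whether finiteness passes to quotients. If it does not, I would instead use the concrete formulation of the criterion in \cite{caleb}, where the relevant algebra is a quotient like $\Qd$ whose finiteness is built in. Alternatively, I would check that $p\le q$ holds already in $\mathcal A$ by applying the same Kazhdan-projection argument to the representation $\pi_0$ itself. From now on I assume $p\le q$ in a finite unital $\Cstar$-algebra.

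Fourth, apply the Murray--von Neumann argument. Let $v=u^*q$. Then $v^*v=q$ and $vv^*=u^*qu=p$. Define
\[
s=v+(1-q).
\]
The cross terms in $s^*s$ vanish: $v^*(1-q)=qu(1-q)$ and $(1-q)v=(1-q)u^*q$, and both are killed because $v$ has range $p\le q$ and support $q$. Hence $s^*s=q+(1-q)=1$. Similarly, $ss^*=p+(1-q)$, because the ranges $p$ and $1-q$ are orthogonal. Finiteness forces $ss^*=1$, so $p=q$, hence $P=Q$, which is the claim.
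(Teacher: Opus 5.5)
The paper gives no proof of its own here; it simply cites \cite[Theorem 3.1]{caleb}. Your argument is the standard one for this kind of statement, and it is correct. The Kazhdan projection $p=\pi_0(p_\Gamma)$ is Murray--von Neumann equivalent, via $v=u^*q$, to $q=upu^*$. So once $p\le q$ is known, finiteness of $\mathcal{A}$ forces $p=q$, and hence $P=\rho(p)=\rho(q)=Q$.

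The step you flag as the main obstacle is harmless. However, only your third option settles it, and you should carry it out rather than assume it:
\begin{itemize}
\item Fix a faithful representation $\iota\colon\mathcal{A}\to B(\mathcal{K})$. Then $\iota\circ\pi_0$ is a unitary representation of $\langle\Gamma,t\rangle$ on $\mathcal{K}$.
\item So $\iota(p)$ and $\iota(q)=\iota(u)\iota(p)\iota(u)^*$ are the projections onto $\mathcal{K}^{\Gamma}\subseteq\mathcal{K}^{H}$.
\item Hence $\iota(qp)=\iota(p)$, and faithfulness gives $qp=p$, i.e.\ $p\le q$ already in $\mathcal{A}$.
\end{itemize}

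Your first option fails in general, because finiteness does not pass to quotients. For instance, the full group $\Cstar$-algebra of a free group of countably infinite rank is residually finite-dimensional, hence finite, yet it surjects onto the infinite Cuntz algebra $\mathcal{O}_2$. Your second option would only cover the specific algebra $\prod B(\mathcal{K}_n)/\bigoplus B(\mathcal{K}_n)$ used in the proof of \cref{criterion:MF:new}.

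One small wording point. In $ss^*$ the cross terms are $v(1-q)$ and its adjoint. They vanish because the support $q$ of $v$ is orthogonal to $1-q$, not because of the ranges.
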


Throughout the proof, we use two different norms on $M_{d_n}$: the operator norm $\| \cdot \|$, and the \textbf{un}normalised Hilbert--Schmidt norm $\| \cdot \|_{HS}$.

\begin{proof}[Proof of \cref{criterion:MF:new}]
Let $a_0 \in A$ be an element that is fixed by $H$, and not by $\gamma_0 \in \Gamma$. The claim is that any homomorphism $\pi \colon W \to U(\Qd)$ is unfaithful, as witnessed by the fact that $\pi(a_0) = \pi(\gamma_0 \cdot a_0)$.  So let us suppose by contradiction that there exists $\pi$ such that $\pi(a_0) \neq \pi(\gamma_0 \cdot a_0)$. We also let $\pi$ denote the linearisation $\Cstar(W) \to \Qd$. The goal is to use $\pi$ to build a representation of $W$ factoring through a unital finite $\Cstar$-algebra, such that the sets of fixed vectors of $H$ and $\Gamma$ differ, which contradicts \cref{fixedpoints:finite}.

\medskip

\emph{Preparation.} By definition of $\Qd$, from $\pi$ we may extract an asymptotic representation, i.e. a sequence of maps $\pi_n \colon W \to U(M_{d_n})$ such that
\begin{equation}
\label{eq:asym:mult}
\| \pi_n(gh) - \pi_n(g)\pi_n(h) \| \to 0 \text{ for all } g, h \in W
\end{equation}
and hence
\begin{equation}
\label{eq:asym:conj}
\| \pi_n(g)\pi_n(a)\pi_n(g)^* - \pi_n(g \cdot a) \| \to 0 \text{ for all } g \in G, a \in A.
\end{equation}

Since $A$ is countable and Boolean, by Pontryagin duality $\Cstar(A)$ is isomorphic to the $\Cstar$-algebra of continuous function on a Cantor space, which is weakly semiprojective \cite{loring}. It follows that there exists a homomorphic lifting of $\pi|_{\Cstar(A)}$, and so the maps $\pi_n$ can be modified so that \eqref{eq:asym:mult} still holds, and moreover $\pi_n|_A$ is a homomorphism; we assume this from now on. In particular the unitaries $\pi_n(a)$ are self-adjoint. Now \eqref{eq:asym:conj} implies that, for all $g \in G, a, b \in A$, and for sufficiently large $n$ (depending on $g, a$ and $b$):
\[
\| \pi_n(g)(\pi_n(a) - \pi_n(b))\pi_n(g)^* - (\pi_n(g \cdot a) - \pi_n(g \cdot b)) \| < 1 \text{ for all } g \in G, a, b \in A.
\]
Note that a term of the form $\pi_n(a) - \pi_n(b)$ is selfadjoint and has spectrum contained in $\{-2, 0, 2\}$ (being a difference of two commuting selfadjoint unitaries). Since their difference is less than $1$, their eigenspaces must have the same dimensions, and so they are unitarily equivalent. Therefore:
\begin{equation}
\label{eq:diff:equal}
\| \pi_n(a) - \pi_n(b) \|_{HS} = \| \pi_n(g \cdot a) - \pi_n(g \cdot b)\|_{HS}.
\end{equation}

Now if $\pi_n(a) \neq \pi_n(b)$, we have
\begin{equation}
\label{eq:diff:big}
\| \pi_n(a) - \pi_n(b) \|_{HS} = \| \pi_n(a) \pi_n(b) - 1 \|_{HS} \geq 2;
\end{equation}
because $\pi_n(a)\pi_n(b)$ is a non-identity selfadjoint unitary. By our assumption there are infinitely many values of $n$ such that $\pi_n(\gamma_0 \cdot a_0) \neq \pi_n(a_0)$, so by \eqref{eq:diff:big}, up to subsequence we may assume without loss of generality that
\begin{equation}
\label{eq:diff:big0}
\| \pi_n(\gamma_0 \cdot a_0) - \pi_n(a_0) \|_{HS} \geq 2 \text{ for all } n.
\end{equation}
Let us also record the following triangle inequality, which follows from \eqref{eq:diff:equal}. For all $g, h \in G, a \in A$ and for sufficiently large $n$:
\begin{equation}
\label{eq:triangle}
\| \pi_n(gh \cdot a) - \pi_n(a) \|_{HS} \leq \| \pi_n(g \cdot a) - \pi_n(a)\|_{HS} + \| \pi_n(h \cdot a) - \pi_n(a)\|_{HS}.
\end{equation}

\medskip

\emph{The representation.} Since $\Gamma$ has property $(T)$, it is finitely generated, so there exist $\gamma_1, \ldots, \gamma_m \in \Gamma \setminus H$ that generate $\Gamma$ together with $H$. For each $n$, set
\[k_n \coloneqq \max\limits_{0 \leq i \leq m} \| \pi_n(\gamma_i \cdot a_0) - \pi_n(a_0) \|_{HS}\]
and note that $k_n \geq 2$ by \eqref{eq:diff:big0}. By \eqref{eq:diff:equal}, up to subsequence we may assume that for all $n$:
\[\| \pi_n(\gamma_i \cdot a_0) - \pi_n(a_0) \|_{HS} = \| \pi_n(\gamma_i^{-1} \cdot a_0) - \pi_n(a_0) \|_{HS}.\]

Pick an arbitrary $g \in \Gamma$, and write it as a product in terms of the generating set $\langle H, \gamma_0, \ldots, \gamma_m \rangle$ as follows:
\begin{equation}
\label{eq:expression}
g = h_0 \gamma_{i_1}^{\varepsilon_1} h_1 \cdots \gamma_{i_r}^{\varepsilon_r} h_r.
\end{equation}
Using \eqref{eq:triangle}, the fact that $H$ fixes $a_0$, the definition of $k_n$ and the formula for inverses above, we obtain
\begin{equation}
\label{eq:bound:r}
\| \pi_n(g \cdot a_0) - \pi_n(a_0) \|_{HS} \leq  \sum\limits_{j = 1}^r \| \pi_n(\gamma_{i_j}^{\varepsilon_j} \cdot a_0) - \pi_n(a_0) \|_{HS} \leq r k_n.
\end{equation}

Now define $\calK_n$ to be $M_{d_n}$ with the rescaled Hilbert--Schmidt norm $\|T\|_{\calK_n} = \| T \|_{HS}/k_n$. For each $n$, let $\sigma_n(g) \coloneqq \Ad \pi_n(g) \in U(\calK_n)$. Then, for all $g, h \in G$ and all $T \in \calK_n$:
\begin{align*}
    \|\sigma(gh)T - \sigma(g)\sigma(h)T\|_{\calK_n} &= \| \pi(gh) T \pi(gh)^* - \pi(g)\pi(h) T (\pi(g) \pi(h))^*\|_{\calK_n} \\
    &\leq \| (\pi(gh) - \pi(g)\pi(h)) T \|_{\calK_n} + \| T (\pi(gh)^* - (\pi(g) \pi(h))^*) \|_{\calK_n} \\
    &\leq 2 \| \pi(gh) - \pi(g)\pi(h)\| \cdot \|T\|_{\calK_n} \xrightarrow{\eqref{eq:asym:mult}} 0.
\end{align*}
That is, $(\sigma_n)_n$ is an asymptotic representation, and so
\[\sigma \coloneqq (\sigma_n)_n \colon G \to \prod B(\calK_n) / \bigoplus B(\calK_n)\]
is a genuine representation. Crucially, the target is a finite $\Cstar$-algebra.

Pushing this one step further, we pick a non-principal ultrafilter $\omega$ and let $\calK \coloneqq \prod_\omega \calK_n$ denote the Hilbert space ultraproduct, and let
\[\rho \colon \prod B(\calK_n) / \bigoplus B(\calK_n) \to B(\calK)\]
be the natural induced map. The representation we are interested in is $\rho \sigma \colon G \to B(\calK)$, which by construction factors through a finite $\Cstar$-algebra. To reach a contradiction to \cref{fixedpoints:finite}, we need to exhibit a vector $\eta \in \calK^{\rho \sigma(H)} \setminus \calK^{\rho \sigma(\Gamma)}$. We will find $\eta$ by building a cocycle and exploiting property $(T)$.

\medskip

\emph{A cocycle.} For each $n$ and $g \in \Gamma$ define $c_n(g) \coloneqq \pi_n(g \cdot a_0) - \pi_n(a_0) \in \calK_n$, and let $c \colon \Gamma \to \calK$ be the corresponding internal map, which is well-defined by \eqref{eq:bound:r}. We want to show that $c$ is a cocycle for the representation $\rho\sigma$. Note that by \eqref{eq:asym:mult}:
\begin{equation}
\label{eq:cocycle:op}
\| c_n(hg) - (c_n(h) + \sigma_n(h) c_n(g))\| \to 0 \text{ for all } g, h \in \Gamma.
\end{equation}
To show that $c$ is a cocycle, we need to show that this also holds for $\| \cdot \|_{\calK_n}$. Let $D_n(h, g)$ be the term inside the $\| \cdot \|$.

Let $g \in \Gamma$ and let $r$ be given by an expression like \eqref{eq:expression}. If $u$ and $v$ are commuting self-adjoint unitaries, then $\Tr|u-v| = \frac12 \| u - v \|_{HS}^2$. Applying this to $u = \pi_n(hg \cdot a_0)$ and $v = \pi_n(h \cdot a_0)$, using \eqref{eq:diff:equal} and \eqref{eq:bound:r} we obtain, for sufficiently large $n$:
\begin{align*}
    \Tr|c_n(hg) - c_n(h)| &= \Tr|\pi_n(hg \cdot a_0) - \pi_n(h \cdot a_0)| = \frac12 \| \pi_n(hg \cdot a_0) - \pi_n(h \cdot a_0) \|_{HS}^2 \\
    &= \frac12 \| \pi_n(g \cdot a_0) - \pi_n(a_0) \|_{HS}^2 \leq \frac12 r^2 k_n^2.
\end{align*}
Applying it to $u = \pi_n(g \cdot a_0)$ and $v = \pi_n(a_0)$ we similarly obtain $\Tr|\sigma_n(h) c_n(g)| \leq \frac12 r^2 k_n^2$. Now we estimate:
\begin{align*}
\| D_n(h, g) \|_{\calK}^2 &= \frac{1}{k_n^2} \Tr|D_n(h, g)|^2 \leq \frac{1}{k_n^2} \| D_n(h, g) \| \Tr|D_n(h, g)| \\
&\leq \frac{1}{k_n^2} \| D_n(h, g) \| (\Tr|c_n(hg) - c_n(h)| + \Tr|\sigma_n(h)c_n(g)|) \leq \| D_n(h, g)\| r^2,
\end{align*}
and this goes to $0$ by \eqref{eq:cocycle:op}.

\medskip

\emph{Conclusion.} Since $\Gamma$ has property $(T)$, the cocycle $c$ is a coboundary, that is, there exists $\eta \in \calK$ such that $c(g) = \rho \sigma(g) \eta - \eta$. Now $H$ fixes $a_0$, so $c$ vanishes identically on $H$, and hence $\eta \in \calK^{\rho \sigma(H)}$. However, there exists some $i$ such that the set of $n$ satisfying $k_n = \|\pi_n(\gamma_i \cdot a_0) - \pi_n(a_0)\|_{HS}$ belongs to $\omega$, and so for this $i$ it holds $\|c(\gamma_i)\|_{\calK} = 1$, and in particular $\eta \notin \calK^{\rho\sigma(\gamma_i)}$. This contradicts \cref{fixedpoints:finite}, and concludes the proof.
\end{proof}

\subsection{Non-MF Brin--Thompson groups}

To wrap up, we need a way to embed such groups into Brin--Thompson groups. There is a theory of \emph{demonstrable subgroups} of Thompson groups \cite{demonstrable} which shows that wreath products are hard to embed into $V$, for acting groups more complicated than $\Z$. We have found similar difficulties trying to embed the wreath products of \cite{caleb} into higher Brin--Thompson group, which is why we needed to strengthen \cref{criterion:MF} to \cref{criterion:MF:new}.

\begin{theorem}
\label{thm:MF}
If $nV$ has property $(T)$, then $(n+1)V$ is not MF.
\end{theorem}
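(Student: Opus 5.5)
The plan is to realise a semidirect product as in \cref{criterion:MF:new} inside $(n+1)V$. Since being MF passes to subgroups (restrict the faithful homomorphism into $U(\Qd)$), this shows that $(n+1)V$ is not MF.

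\emph{Choice of $G$, $\Gamma$, $t$.} Let $G=nV$, embedded in $(n+1)V$ as the standard copy acting on the first $n$ coordinates and trivially on the last. Take $U_w$, $\Gamma=\Fix(U_1)$ and $t$ as in \cref{not:Uw}.
\begin{itemize}
\item The prefix replacement $U_0\to\frakC^n$ identifies $\Gamma$ with $nV$: an element of $nV$ that is the identity on $U_1$ is exactly an $nV$-element of the rectangle $U_0$. Hence $\Gamma$ has property $(T)$ by hypothesis.
\item As in the proof of \cref{cor:sofichyper}, $H:=t\Gamma t^{-1}=\Fix(tU_1)=\Fix(U_{01}\sqcup U_1)$, the copy of $nV$ localised at $U_{00}$.
\item $H$ is a proper subgroup of $\Gamma$, since $\Gamma$ contains elements moving points of $U_{01}$.
\end{itemize}

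\emph{The Boolean group.} Fix the involution $\tau=(0\ 1)\in V$ swapping the cylinders $C(0)$ and $C(1)$ of $\frakC$. For a clopen set $S\subseteq\frakC^n$, let $a_S\in(n+1)V$ apply $\tau$ to $\xi_{n+1}$ when $(\xi_1,\dots,\xi_n)\in S$, and fix all other points. Writing $S$ as a finite union of dyadic rectangles shows that $a_S$ is locally a prefix replacement, so $a_S\in(n+1)V$.
\begin{itemize}
\item The elements $a_S$ commute, are involutions, and satisfy $a_Sa_{S'}=a_{S\triangle S'}$. So $A=\{a_S\}$ is a countable Boolean group, isomorphic to the Boolean algebra of clopen subsets of $\frakC^n$ under symmetric difference.
\item Since $g\in G$ does not touch coordinate $n+1$, we have $ga_Sg^{-1}=a_{gS}$. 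Thus $G$ normalises $A$.
\item Every nontrivial $a_S$ moves coordinate $n+1$ while elements of $G$ do not, so $G\cap A=1$.
\end{itemize}
Hence $W=\langle G,A\rangle\cong G\ltimes A$ is a subgroup of $(n+1)V$, with $G$ acting on $A$ through its action on clopen sets.

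\emph{Fixed points.} The element $a_{U_{01}}$ is fixed by $H$, because $H$ fixes $U_{01}$ pointwise. It is not fixed by $\Gamma$: some element of $\Gamma$ maps a point of $U_{01}$ into $U_{00}$, so it does not preserve $U_{01}$. Therefore $A^\Gamma\neq A^H$. Then \cref{criterion:MF:new} shows that $W$ is not MF, and hence neither is $(n+1)V$.

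The only step needing real care is the identification $\Gamma\cong nV$, which is what transfers property $(T)$ to $\Gamma$. I would verify it via the conjugation by the prefix replacement $U_0\to\frakC^n$, checking that the restriction map $\Fix(U_1)\to\operatorname{Homeo}(U_0)$ lands onto the full local $nV$ of the rectangle $U_0$.
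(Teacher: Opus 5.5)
Your proposal is correct and follows the paper's proof essentially step for step: the same embedding $nV<(n+1)V$, the same Boolean group of height flips normalised via $ga_Kg^{-1}=a_{gK}$, the same witness $a_{U_{01}}$ fixed by $H$ but not by $\Gamma$, and the same appeal to \cref{criterion:MF:new}. Indexing $a_S$ by all clopen sets rather than only dyadic rectangles is a small improvement, since $a_{gS}$ then lies literally in the defined family. Your explicit checks that $\Gamma\cong nV$, that $H\lneq\Gamma$, and that MF passes to subgroups fill in steps the paper leaves implicit.
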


\begin{proof}
We follow \cref{not:Uw}, and we further let $H \coloneqq t\Gamma t^{-1}$, which is the copy of $nV$ localised at $U_{00}$. We view $\frakC^{n+1}$ as the product $\frakC^n \times \frakC$, and let $G$ act on $\frakC^{n+1}$ by acting on the base and leaving the height invariant, which realises $G < (n+1)V$. Let $\sigma$ the standard involution of $\frakC$ exchanging $0\frakC$ and $1\frakC$. For a dyadic rectangle $K \subset \frakC^n$, define the element $a_K \in (n+1)V$ by:
\[a_K(x, t) \coloneqq
\begin{cases}
(x, \sigma(t)) \text{ if } x \in K; \\
(x, t) \text{ otherwise}.
\end{cases}\]
The $a_K$ are pairwise commuting involutions: the diagram below shows the effect of $a_K a_L = a_L a_K$ in the non-trivial case in which $K$ and $L$ overlap.

\begin{center}
\begin{tikzpicture}[thick]
  \draw (0,0) rectangle (9,4);
  \draw (3,0) -- (3,4);
  \draw (6,0) -- (6,4);
  \draw (0,2) -- (9,2);

  \node[below] at (1.5,0) {$K \setminus L$};
  \node[below] at (4.5,0) {$K \cap L$};
  \node[below] at (7.5,0) {$L \setminus K$};

  \node[left] at (0,2) {$\mathfrak{C}$};

  \draw[<->] (1.5,3.2) -- (1.5,0.8)
    node[pos=0.25,right] {$\sigma_K$};

  \draw[<->] (7.5,3.2) -- (7.5,0.8)
    node[pos=0.25,right] {$\sigma_L$};

  \draw[->] ({4.5+0.9*cos(80)},{2+1.1*sin(80)})
    arc[start angle=80, end angle=-260, x radius=0.9, y radius=1.1];
  \node[anchor=south] at (4.5,2.4) {id};
\end{tikzpicture}
\end{center}

It follows that the group
\[A \coloneqq \langle a_K : K \subset \frakC^n \text{ a dyadic rectangle}\rangle\]
is a (countable) Boolean group. Moreover, it intersects $G$ trivially, and it is normalised by $G$, because $g a_K g^{-1} = a_{gK}$. Therefore together they generate a semidirect product $W \coloneqq G \ltimes A$. Finally, $a_{U_{01}}$ is fixed by $H$, and not by $\Gamma$; for example, there exists $\gamma \in \Gamma$ such that $\gamma \cdot a_{U_{01}} = a_{U_{00}}$. We can therefore apply \cref{criterion:MF:new} and conclude that $W$, and hence $(n+1)V$, is not MF.
\end{proof}

Together with \cref{thm:main-intro}, this concludes the proof of \cref{main:inapprox}.

\subsection{Inapproximability for the Schatten $p$-norm}

\cref{schatten} will be a direct consequence of \cref{thm:MF} thanks to \cite{1:approx}, therefore we refer the reader to \cite{1:approx} for the relevant notions of approximability and stability.

\begin{proof}[Proof of \cref{schatten}]
    For $1 \leq p \leq \infty$ let $\Gp$ denote the family of finite-dimensional unitary groups endowed with the unnormalised Schatten $p$-norm, for $p = \infty$ this is the operator norm. By \cite[Theorem 1.3 and Remark 3.1]{1:approx}, if a group $G$ contains an involution that is in the kernel of every finite quotient, and $G$ is $\Gq$-stable for some $1 \leq q \leq \infty$, then $G$ is not $\Gp$-approximable for any $1 \leq p \leq q$. Now $nV$, for any $n$, has no non-trivial finite quotients and plenty of involutions. So we only need to show that, for large $n$, the group $nV$ is $\Gq$-stable, for $q = \infty$. Indeed, if $nV$ is not MF (which holds for sufficiently large $n$ by \cref{thm:MF}), then every asymptotic homomorphism to $\mathcal{G}_\infty$ has non-trivial kernel, and because $nV$ is simple, it must be trivial, so $nV$ is vacuously stable.
\end{proof}

This is one of the first naturally occurring examples of a group that is not Schatten $p$-approximable for any $1 \leq p < \infty$. Other such examples can be obtained from finite central extensions of lattices in higher rank Lie groups thanks to the cohomology vanishing criterion from \cite{2:approx, p:approx} and the vanishing results from \cite{B3S}. As for more ad-hoc examples, the proof technique above applies to any simple group containing an involution and a non-MF subgroup, e.g. a simple group containing one of the groups from \cite{caleb}. Even before these developments, the existence, for every $1 \leq p < \infty$, of a finitely presented group that is not Schatten $p$-approximable \cite{2:approx, p:approx, 1:approx} implies that a universal finitely presented group \cite{higman} fails all these approximation properties at once.

\begingroup
\raggedright
\microtypesetup{expansion=false}
\bibliographystyle{amsalpha}
\bibliography{Literature}
\endgroup

\end{document}